\definecolor{nicered}{rgb}{0.6, 0, 0.1}
\definecolor{niceblue}{rgb}{0.06, 0.3, 0.57}
\definecolor{nicegreen}{rgb}{0.0, 0.51, 0.5}
\crefname{theorem}{Theorem}{Theorems} 
\crefname{lemma}{Lemma}{Lemmas}
\crefname{corollary}{Corollary}{Corollaries}
\crefname{proposition}{Proposition}{Propositions}
\crefname{definition}{Definition}{Definitions}
\crefname{convention}{Convention}{Conventions}
\crefname{remark}{Remark}{Remarks}
\crefname{example}{Example}{Examples}
\crefname{notation}{Notation}{Notations}
\crefname{setup}{Setup}{Setups}
\newtheorem{theorem}{Theorem}[section]
\newtheorem{lemma}[theorem]{Lemma}
\newtheorem{corollary}[theorem]{Corollary}
\newtheorem{proposition}[theorem]{Proposition}
\newtheorem{thmintro}{Theorem}
\theoremstyle{definition}
\newtheorem{definition}[theorem]{Definition}
\newtheorem{convention}[theorem]{Convention}
\newtheorem{notation}[theorem]{Notation}
\newtheorem{example}[theorem]{Example}
\newtheorem{setup}[theorem]{Setup}
\theoremstyle{remark}
\newtheorem{remark}[theorem]{Remark}
\numberwithin{equation}{section} %Can replace {subsection} with {theorem} if you want
\newcommand{\FF}{\mathbb{F}}
\newcommand{\NN}{\mathbb{N}}
\newcommand{\RR}{\mathbb{R}}
\newcommand{\QQ}{\mathbb{Q}}
\newcommand{\kk}{\Bbbk}
\newcommand{\VV}{\mathbb{V}}
\newcommand{\vv}[1]{\mathbf{{#1}}}
\newcommand{\one}{\vv{1}}
\newcommand{\canvec}{\vv{e}}
\newcommand{\norm}[1]{ \| #1 \| }
\newcommand{\scalarvector}[2]{#1 #2}
\newcommand{\vectorscalar}[2]{#2 #1}
\DeclarePairedDelimiter{\ideal}{\langle}{\rangle}
\newcommand{\ideala}{\mathfrak{a}}
\newcommand{\idealb}{\mathfrak{b}}
\newcommand{\ideald}{\mathfrak{d}}
\newcommand{\idealm}{\mathfrak{m}}
\newcommand{\idealn}{\mathfrak{n}}
\newcommand{\frob}[2]{#1^{[#2]}}
\newcommand{\up}[1]{\left\lceil #1 \right\rceil}
\newcommand{\down}[1]{\left\lfloor #1 \right\rfloor}
\newcommand{\lpr}[2]{ [ \hspace{.01in} #1 \, \% \, #2 \hspace{.01in} ]} %least positive residue
\DeclareMathOperator{\diag}{diag}
\DeclareMathOperator{\comp}{comp}
\DeclareMathOperator{\crit}{crit}
\newcommand{\cf}{cf.}
\newcommand{\eg}{e.g., }
\newcommand{\D}{d}
\renewcommand{\geq}{\geqslant}
\renewcommand{\leq}{\leqslant}
\begin{document}

\title{Frobenius powers of some monomial ideals}
\author{Daniel J.~Hern\'andez}
\address{Department of Mathematics, University of Kansas, Lawrence, KS~66045, USA}
\email{hernandez@ku.edu}

\author{Pedro Teixeira}
\address{Department of Mathematics, Knox College, Galesburg, IL~61401, USA}
\email{pteixeir@knox.edu}

\author{Emily E.~Witt}
\address{Department of Mathematics, University of Kansas, Lawrence, KS~66045, USA}
\email{witt@ku.edu}
\thanks{Partial support was provided by NSF grants DMS-1600702 (first author) and DMS-1623035 (third author).}

\subjclass[2010]{Primary 13A35; Secondary 14B05}

\date{}

\begin{abstract}
    In this paper, we characterize the (generalized) Frobenius powers and critical exponents of two classes of monomial ideals of a polynomial ring in positive characteristic:  powers of the homogeneous maximal ideal, and ideals generated by positive powers of the variables.
    In doing so, we effectively characterize the test ideals and $F$-jumping exponents of sufficiently general homogeneous polynomials, and of all diagonal polynomials.
    Our characterizations make these invariants computable, and show that they vary uniformly with the congruence class of the characteristic modulo a fixed integer.
    Moreover, we confirm that for a diagonal polynomial over a field of characteristic zero, the test ideals of its reduction modulo a prime agree with the reductions of its multiplier ideals for infinitely many primes. 
\end{abstract}

\maketitle

\section{Introduction}

Aiming at understanding how the Frobenius singularities of an ideal in positive characteristic relate to those of a generic element of the ideal, in \cite{hernandez+etal.frobenius_powers} the authors extended the notion of Frobenius powers, assigning to each ideal $\ideala$ of an $F$-finite regular domain $R$ of characteristic $p>0$, and each nonnegative real number~$t$, an ideal $\frob{\ideala}{t}$ of $R$.
When $t$ is an integral power of $p$, our Frobenius powers agree with the standard Frobenius powers or with the ``Frobenius roots'' of \cite{blickle+mustata+smith.discr_rat_FPTs}---that is,  $\frob{\ideala}{p^e}=\ideal{f^{p^e}:f\in \ideala}$, and  $\frob{\ideala}{1/p^e}$ is the smallest ideal $\idealb$ such that $\ideala\subseteq \frob{\idealb}{p^e}$.

Frobenius powers behave in many ways like test ideals and multiplier ideals.  
For instance, as $t$ increases, the ideals $\frob{\ideala}{t}$  form a nested, non-increasing chain, and there are only finitely many ideals of this form when we restrict $t$ to any bounded subset of nonnegative numbers.
We call the parameters at which $\frob{\ideala}{t}$ ``jumps'' the \emph{critical exponents} of $\ideala$.
These are the analogs of $F$-jumping exponents of test ideals, or jumping numbers of multiplier ideals. 
We refer the reader to \cite{hernandez+etal.frobenius_powers} for properties of Frobenius powers and critical exponents, as well as further motivation for their study, and connections with the theories of test ideals and multiplier ideals. 

The present article, which can be regarded as a companion to \cite{hernandez+etal.frobenius_powers}, focuses on the computation of Frobenius powers and critical exponents of two classes of monomial ideals: powers of the homogeneous maximal ideal, and diagonal ideals.  
Let $R=\kk[x_1,\ldots,x_n]$, where $\kk$ is a field of characteristic $p>0$ with $[ \kk : \kk^p] < \infty$. 
We summarize our results for powers of $\idealm = \ideal{x_1,\ldots,x_n}$ below.

\begin{thmintro}[\cf\ \Cref{critical exponents of m^d: T,Frobenius powers of m^d: T}]
   \label{powers of the maximal ideal: TI}
   The critical exponents of $\idealm^\D$ in the open unit interval $(0,1)$ are in correspondence with the integers $k$ satisfying $n \leq k < \D$.
   More precisely, given such a $k$, set $s = \inf \{ e \geq 1 : \lpr{kp^e}{\D} < n\}$, where $\lpr{kp^e}{\D}$ is the least \emph{positive} integer congruent to $kp^e$ modulo $\D$.
   Then the rational number
   \[ \lambda =  \frac{k}{\D} - \frac{\lpr{kp^s}{\D}}{\D p^s} \]
   is a critical exponent of $\idealm^\D$, where we interpret this formula to agree with $k/\D$ if $s= \infty$.\footnote{The rational number $\lambda$ is often called the \emph{$s$-th truncation of $k/\D$ \textup(base $p$\textup)}; see, \eg \cite[Definition~2.2, Lemma~2.5]{hernandez+others.fpt_quasi-homog.polys} or \cite[Definition~2.3, Remark~2.6]{ftf1}.}
   Moreover, every critical exponent $\lambda$ of $\idealm^\D$ in the open unit interval is as above for some $n \leq k < \D$, and if $p>\D$, then $\frob{(\idealm^\D)}{\lambda} = \idealm^{k-n+1}$ for each such $\lambda$.
\end{thmintro}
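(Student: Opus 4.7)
The strategy is to reduce everything to explicit Frobenius roots of powers of $\idealm$, compute those roots in closed form, and then read off the critical exponents. Results from the companion paper should identify
\[
    \frob{(\idealm^\D)}{a/p^e} \;=\; \frob{(\idealm^\D)^a}{1/p^e} \;=\; \frob{\idealm^{\D a}}{1/p^e}
\]
for rational parameters $a/p^e$ in reduced form, and provide the stability needed to extend the computation to arbitrary real $\lambda$. Thus the first task is to obtain a closed-form expression for $\frob{\idealm^m}{1/p^e}$ in terms of $m$, $p^e$, and $n$.

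For a monomial ideal $I = \ideal{x^{\vv{a}_i}}$, one has $\frob{I}{1/p^e} = \ideal{x^{\down{\vv{a}_i/p^e}}}$. Applying this to $\idealm^m$, whose generators are the monomials of total degree $m$, and invoking the $S_n$-invariance of $\idealm^m$ together with the presence of the extremal generators $x_i^m$, one expects $\frob{\idealm^m}{1/p^e}$ to itself be a power of $\idealm$, namely $\idealm^{j(m,p^e)}$, where
\[
    j(m,p^e) \;=\; \min\bigl\{ \textstyle\sum_i \down{c_i/p^e} \,:\, \vv{c}\in\NN^n,\ |\vv{c}| = m \bigr\}.
\]
Writing $c_i = p^e q_i + r_i$ with $0 \le r_i < p^e$, minimizing $\sum q_i$ reduces to maximizing $\sum r_i$ subject to $\sum r_i \le \min\{m,\, n(p^e-1)\}$ and $\sum r_i \equiv m \pmod{p^e}$. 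A short case analysis, split on the size of $m \bmod p^e$ relative to $p^e - n$, should collapse this to the compact form $j(m,p^e) = \max\{0,\, \up{(m+n)/p^e} - n\}$.

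With this formula in hand, I would locate the critical exponents as the places where $j(\D a, p^e)$ increments. Given $k\in\{n,\ldots,\D-1\}$ and the corresponding $s$, set $r := \lpr{kp^s}{\D} \in [1, n-1]$ and $a := (kp^s - r)/\D$, so $\lambda = a/p^s$. Plugging $m = \D a = kp^s - r$ into the closed form gives $j = k - n + 1$, whereas $m' = \D(a-1) = kp^s - r - \D$ yields $j = k - n$; both conclusions use $p > \D$ to place the fractional quantities $(n-r)/p^s$ and $(n-r-\D)/p^s$ strictly in $(0,1)$ and $(-1,0)$ respectively. Monotonicity of $j$ then forces a genuine jump precisely at $\lambda$ and simultaneously gives $\frob{(\idealm^\D)}{\lambda} = \idealm^{k-n+1}$. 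The case $s = \infty$ (which, under $p > \D$, corresponds to residues $\lpr{kp^e}{\D} \in [n, \D-1]$ for every $e \ge 1$) is handled analogously: evaluating the formula at $\lceil kp^e/\D\rceil$ produces the uniform value $k-n+1$, and a limiting argument places the jump at $\lambda = k/\D$. Conversely, inspection of the closed form shows that $j(\D a, p^e)$ can increment only when $a/p^e$ crosses one of the listed rationals, so these are exactly the critical exponents in $(0,1)$.

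The main obstacle is the combinatorial optimization underlying the closed-form expression for $j(m,p^e)$: the various boundary situations (small $p^e$, residues near $p^e - n$, $m$ just past a multiple of $p^e$) must collapse into a single uniform formula, and this is what subsequently makes the jump analysis transparent. A secondary difficulty is the treatment of parameters $\lambda$ not of the form $a/p^e$---most importantly the $s = \infty$ critical exponents $\lambda = k/\D$---where one must carefully invoke the stability of Frobenius powers from the companion paper to ensure that these values do give honest jumps and that no spurious critical exponents are introduced.
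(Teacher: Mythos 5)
The proposal hinges on the reduction
\[
    \frob{(\idealm^\D)}{a/p^e} \;=\; \frob{(\idealm^\D)^a}{1/p^e} \;=\; \frob{\idealm^{\D a}}{1/p^e},
\]
but the first equality is \emph{false} whenever $a \geq p$ (and $n \geq 2$). By definition, $\frob{(\idealm^\D)}{a/p^e} = \frob{(\frob{(\idealm^\D)}{a})}{1/p^e}$, and the inner (generalized) Frobenius power $\frob{(\idealm^\D)}{a}$ is built from the base-$p$ digits of $a$: if $a = a_0 + a_1 p + \cdots + a_r p^r$, then $\frob{(\idealm^\D)}{a} = (\idealm^\D)^{a_0}\,\frob{((\idealm^\D)^{a_1})}{p}\cdots\frob{((\idealm^\D)^{a_r})}{p^r}$, which is strictly contained in $(\idealm^\D)^a = \idealm^{\D a}$ once $a \geq p$. (For instance, $\frob{(\idealm^2)}{p} = \ideal{x_1^{2p}, x_1^p x_2^p, \ldots}$ omits $x_1^{2p-1}x_2 \in \idealm^{2p} = (\idealm^2)^p$.) There is no result in the companion paper licensing your identity; on the contrary, this gap between $\frob{\ideala}{m}$ and $\ideala^m$ is exactly the distinction between the $\mu$- and $\nu$-invariants that drives the whole analysis. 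If your reduction were correct, the critical exponents of $\idealm^\D$ would coincide with its $F$-thresholds, which (by the analogue of your formula for $\nu$, \Cref{regular powers: L}) are all simply $k/\D$, with no correction term; the observed dependence on $s$ and the subtraction of $\lpr{kp^s}{\D}/(\D p^s)$ are invisible to your approach.

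Your closed form $\frob{\idealm^m}{1/q} = \idealm^{\max\{0,\,\up{(m+n)/q} - n\}}$ is correct, and your numerical verification that plugging $m = \D a$ with $a = (kp^s - \lpr{kp^s}{\D})/\D$ yields $k-n+1$ does recover the right answer at these two specific points (where the overestimate $\frob{\idealm^{\D a}}{1/q} \supseteq \frob{(\idealm^\D)}{a/q}$ happens to be an equality), but this coincidence is not explained or used, and the surrounding argument is unsound. In particular, the quantity $\up{(\D a + n)/p^e} - n$ is not even a well-defined function of the ratio $a/p^e$: it changes when you pass from $(a, p^e)$ to $(ap, p^{e+1})$, whereas the genuine Frobenius power cannot, so the claimed ``monotonicity of $j$'' fails and the jump cannot be pinned down by this device. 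To carry out a proof along these lines you would need to compute $\frob{(\idealm^\D)}{a}$ honestly (keeping track of base-$p$ carries) before taking Frobenius roots; this is effectively what the paper does through the $\mu$-invariant, the recursion in \Cref{recursive nu and mu: C}, and the comparison $\mu = \nu$ in \Cref{nu=mu: L}, which hold only under the hypothesis that $\lpr{kp^e}{\D}$ stays $\geq n$ — precisely the condition that produces the extended integer $s$ in the final formula.
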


Some comments are in order:
Though \Cref{powers of the maximal ideal: TI} describes only the Frobenius powers of $\idealm^\D$ with exponents in the open unit interval, Skoda's Theorem for Frobenius powers \cite[Corollary~3.17]{hernandez+etal.frobenius_powers} implies that these ideals determine $\frob{(\idealm^\D)}{t}$ for all nonnegative $t$.
In particular, \Cref{powers of the maximal ideal: TI} tells us that the critical exponents of $\idealm^\D$ are simply the positive integers when $\D \leq n$.
Perhaps the most interesting aspect of \Cref{powers of the maximal ideal: TI} is that it illustrates that the Frobenius powers of $\idealm^\D$ vary uniformly with the class of $p$ modulo $\D$, at least when $p>\D$.

We now turn our focus to the Frobenius powers of the balanced diagonal ideal $ \ideald=\ideal{x_1^\D,\ldots,x_n^\D}$. 
Though $\ideald$ and $\idealm^\D$ share the same integral closure, it turns out that their Frobenius powers can be quite different;  contrast this with the situation for test ideals and multiplier ideals, which are insensitive to taking integral closure. To describe this difference, we recall some terminology:
If $k$ is a positive integer, then a \emph{composition of $k$ of size $n$} is a point in $\NN^n$ with positive coordinates that sum to $k$.
Roughly speaking, we show that when $p$ is large relative to $\D$ and $n$, the critical exponents of $\ideald$ in the open unit interval are determined by the compositions of size $n$ of all integers $k$ satisfying $n \leq k \leq \D$;  see \Cref{critical exponents of diagonal ideal: T} for a more precise statement.
This suggests that the ideal $\ideald$ can possess many more critical exponents in the open unit interval than $\idealm^\D$, and this is indeed often the case.

In fact, rather than restricting ourselves to the balanced case, we completely describe the critical exponents and Frobenius powers of an arbitrary diagonal ideal $\ideald=\ideal{x_1^{d_1},\ldots,x_n^{d_n}}$, where each $d_i$ is a positive integer.
As in the balanced case, we show that the critical exponents of $\ideald$ are determined by certain compositions of integers, at least when $p$ is large relative to $n$ and the exponents appearing in $\ideald$.
Furthermore, we provide an explicit description for the Frobenius powers $\frob{\ideald}{t}$.
In analogy with $\idealm^\D$, these descriptions demonstrate that when $p \gg 0$, the Frobenius powers of $\ideald$ vary uniformly with the class of $p$ modulo a fixed positive integer.
We refer the reader to \Cref{crits of general diagonal ideal: T,Frobenius powers of general diagonal ideal: T} for more precise statements.

We stress that our results allow for the effective computation of the Frobenius powers and critical exponents of diagonal ideals and powers of the homogeneous maximal ideal. 
See \Cref{m^d examples: SS,diagonal examples: SS} for examples of these computations. 

Frobenius powers and critical exponents of an ideal are closely related to test ideals and $F$-jumping exponents of generic linear combinations of generators of the ideal, and from this perspective, this paper might as well have been called \emph{Test ideals and $F$-jumping exponents of generic homogeneous and diagonal polynomials}.
Indeed, the critical exponents and Frobenius powers computed in \Cref{powers of the maximal ideal: TI} give the $F$-jumping exponents and test ideals of sufficiently general homogeneous polynomials.
For example, we have the following.

\begin{thmintro}[\cf\ \Cref{generic homogeneous poly: C}]
   \label{generic homogeneous: TI}
   Let $x^{\vv{a}_1}, \ldots, x^{\vv{a}_l}$ be the distinct monomials of degree $\D$ in the variables $x_1, \ldots, x_n$, and $\kk=\FF_p(\alpha_1,\ldots,\alpha_l)$, where $\alpha_{1},\ldots,\alpha_{l}$ are algebraically independent over $\FF_p$.
   Consider the polynomial $f = \alpha_1  x^{\vv{a}_1} + \cdots + \alpha_l x^{\vv{a}_l}\in \kk[x_{1},\ldots,x_{n}]$.
   If $p>\D$, then the test ideal $\tau(f^{t})$ is equal to $\frob{(\idealm^\D)}{t}$ for every $0 < t < 1$.
\end{thmintro}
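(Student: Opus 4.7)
The strategy is to combine two ingredients: \Cref{powers of the maximal ideal: TI}, which computes the Frobenius powers of $\idealm^\D$ explicitly when $p > \D$ and $t \in (0,1)$, and a general comparison principle from \cite{hernandez+etal.frobenius_powers} asserting that for a sufficiently generic $\kk$-linear combination $f = \sum_i \alpha_i g_i$ of the generators of an ideal $\ideala$, the test ideal $\tau(f^t)$ agrees with the Frobenius power $\frob{\ideala}{t}$.

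First, I would observe that the monomials $x^{\vv{a}_1}, \ldots, x^{\vv{a}_l}$ are precisely the standard monomial basis of the degree-$\D$ component of $R$, and in particular form a generating set of $\idealm^\D$. Thus $f$ is a $\kk$-linear combination of these generators whose coefficients are algebraically independent over the prime subfield $\FF_p$. This independence supplies the required genericity: an algebraically independent tuple avoids every proper $\FF_p$-Zariski-closed subset of affine space, which is the sense of ``generic'' expected by the comparison theorem (a single polynomial of bounded degree in the $\alpha_i$ that vanishes on our tuple must vanish identically).

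Applying the comparison result to $\ideala = \idealm^\D$ then yields $\tau(f^t) = \frob{(\idealm^\D)}{t}$ for all $t$ in the relevant range. The main obstacle --- namely the comparison theorem itself --- lies in the companion paper rather than here; the substance of the present corollary is simply the combination of that general principle with the explicit identification of $\frob{(\idealm^\D)}{t}$ furnished by \Cref{powers of the maximal ideal: TI}. The restriction $t \in (0,1)$ and the hypothesis $p > \D$ are imposed in order to match the hypotheses of \Cref{powers of the maximal ideal: TI}, where the explicit description of the right-hand side (in particular, the identification $\frob{(\idealm^\D)}{\lambda} = \idealm^{k-n+1}$ at critical exponents) becomes available; the upper bound $t < 1$ also keeps us within the range covered by Skoda's theorem for Frobenius powers, so no separate argument is needed there.
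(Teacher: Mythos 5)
Your core approach is correct and matches the paper's: the proof rests on the companion paper's comparison theorem for very general elements (Corollary~5.7 of \cite{hernandez+etal.frobenius_powers}), with algebraic independence of the $\alpha_i$ supplying exactly the required genericity. The paper applies this over the prime field to get $\tau(f^t) = \frob{(\idealn^\D)}{t}R$ with $\idealn = \ideal{x_1,\ldots,x_n}\subseteq\FF_p[x_1,\ldots,x_n]$, and then uses that $\idealn^\D$ is a monomial ideal to identify $\frob{(\idealn^\D)}{t}R$ with $\frob{(\idealm^\D)}{t}$; you gloss over this small base-change step, but it is routine.

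A couple of your peripheral remarks are off, though neither is fatal. \Cref{powers of the maximal ideal: TI} is not actually an ingredient here: the identity $\tau(f^t) = \frob{(\idealm^\D)}{t}$ follows directly from the comparison result without knowing what either side equals, so the hypothesis $p > \D$ is not used anywhere in this part of the proof (it is carried in the hypotheses of \Cref{generic homogeneous poly: C} because the second assertion and the explicit formula $\frob{(\idealm^\D)}{\lambda} = \idealm^{k-n+1}$ require it, not because the equality of ideals does). And the restriction to $t < 1$ is not a convenience that Skoda makes redundant; the two Skoda theorems actually drive $\tau(f^t)$ and $\frob{(\idealm^\D)}{t}$ apart for $t \geq 1$, since one multiplies by powers of the principal ideal $\ideal{f}$ and the other by powers of $\idealm^\D$ --- already at $t=1$, $\tau(f^1)=\ideal{f}$ whereas $\frob{(\idealm^\D)}{1}=\idealm^\D$. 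So the bound on $t$ is essential to the truth of the statement, not merely a way to avoid a separate argument.
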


We also have a statement relating the critical exponents of $\idealm^\D$ to the $F$-jumping exponents of $f$ as in \Cref{generic homogeneous: TI}, but for arbitrary $\kk$
(also see \Cref{generic homogeneous poly: C}).

For a diagonal polynomial $g$, that is, any $\kk^{\ast}$-linear combination of the minimal monomial generators of the ideal $\ideald$ considered above, we obtain a simpler and stronger result: the critical exponents and Frobenius powers of $\ideald$ are the $F$-jumping exponents and test ideals of the diagonal polynomial $g$; see \Cref{diagonal poly: P}.
Note that this significantly extends the results obtained in \cite{hernandez.diag_hypersurf}.

If $\ideala$ is an ideal in characteristic zero, let $\ideala_p$ and $\mathcal{J}(\ideala^t)_p$ denote the reductions of $\ideala$ and of the multiplier ideal $\mathcal{J}(\ideala^t)$ to characteristic $p \gg 0$. 
It has been conjectured that there are infinitely many primes $p$ such that $\mathcal{J}(\ideala^t)_p$ and the test ideal $\tau(\ideala_p^t)$ agree at all parameters $t$ (see, \eg \cite[Conjecture~1.2]{mustata+srinivas.ordinary_varieties}).
The fact that the test ideals of diagonal polynomials agree with the Frobenius powers of their term ideals at all parameters in the open unit interval allows us to verify this  conjecture for diagonal polynomials.

\begin{thmintro}[\cf\ \Cref{test=multiplier for diagonal: T}]
    If $g$ is a diagonal polynomial over $\QQ$, then there exist infinitely many primes $p$ such that $ \tau(g_p^{t}) = \mathcal{J}(g^{t})_p$ for every $t \geq 0$.  
\end{thmintro}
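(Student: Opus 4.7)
My plan is to reduce the identity $\tau(g_p^{t})=\mathcal{J}(g^{t})_p$ to a combinatorial comparison at the level of the term ideal of $g$, and then to isolate a congruence class of primes on which this comparison holds. Write $\ideald=\ideal{x_1^{d_1},\ldots,x_n^{d_n}}$ for the term ideal of $g$. By \Cref{diagonal poly: P}, for all sufficiently large primes $p$ one has $\tau(g_p^{t})=\frob{(\ideald_p)}{t}$ for every $t\geq 0$. On the other hand, since $g$ is nondegenerate with respect to its Newton polytope (which coincides with that of $\ideald$), Howald's theorem for nondegenerate polynomials yields $\mathcal{J}(g^{t})=\mathcal{J}(\ideald^{t})$ for every $t\geq 0$, and this identity survives reduction modulo $p$. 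It therefore suffices to produce infinitely many primes $p$ for which
\[
   \frob{(\ideald_p)}{t} = \mathcal{J}(\ideald^{t})_p \quad \text{for every } t \geq 0.
\]

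By Skoda's theorem, both sides above are determined by their values on a bounded interval of exponents: \cite[Corollary~3.17]{hernandez+etal.frobenius_powers} provides the reduction for Frobenius powers, and the classical Skoda theorem does the analogous job for multiplier ideals (both apply since $\ideald$ has $n$ generators in the $n$-variable polynomial ring). On this interval, both sides admit explicit monomial descriptions. On the right, Howald's monomial formula states that $x^{\vv{a}}\in \mathcal{J}(\ideald^{t})$ if and only if $\sum_{i=1}^{n} (a_i+1)/d_i>t$, a condition independent of $p$. On the left, \Cref{crits of general diagonal ideal: T,Frobenius powers of general diagonal ideal: T} describe the monomial generators of $\frob{(\ideald_p)}{t}$ in terms of compositions of integers together with base-$p$ truncations of certain fixed rationals, and these descriptions depend on $p$ only through its residue class modulo a fixed integer $N$ that can be taken to be a multiple of $\operatorname{lcm}(d_1,\ldots,d_n)$.

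To finish, I would restrict to primes $p\equiv 1\pmod{N}$ that are also large enough for the clean versions of the theorems above to apply. Under this congruence, residues of the form $\lpr{kp^e}{N}$ collapse to $\lpr{k}{N}$ for every $e\geq 0$, so every base-$p$ truncation of a rational with denominator dividing $N$ equals that rational itself. Substituting into \Cref{Frobenius powers of general diagonal ideal: T} shows that the monomial generators of $\frob{(\ideald_p)}{t}$ agree exactly with those prescribed by Howald's formula, establishing the displayed equality on the bounded interval---and hence, by Skoda, for every $t\geq 0$. Dirichlet's theorem on primes in arithmetic progressions now produces the desired infinitude of primes $p\equiv 1\pmod{N}$. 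The main obstacle is the combinatorial verification that, for $p\equiv 1\pmod{N}$, the Frobenius-power generators collapse precisely onto Howald's monomial description; this is exactly what the choice of congruence is engineered to guarantee, and it is essentially already encoded in \Cref{Frobenius powers of general diagonal ideal: T}.
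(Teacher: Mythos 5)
Your proposal follows essentially the same route as the paper's own proof: reduce to the term ideal via \Cref{diagonal poly: P}, invoke Howald's theorem to identify $\mathcal{J}(g^t)$ with $\mathcal{J}(\ideald^t)$, apply Skoda to restrict to a bounded interval, restrict attention to primes $p\equiv 1\bmod\D$ (with $\D$ the lcm of the exponents) so that the base-$p$ truncations collapse, and then match the resulting explicit monomial descriptions of $\frob{(\ideald_p)}{t}$ and $\mathcal{J}(\ideald^t)_p$. The paper packages the $p\equiv 1\bmod\D$ collapse into \Cref{diagonal and p=1: C} rather than extracting it directly from \Cref{Frobenius powers of general diagonal ideal: T}, but this is only a difference in presentation, not in substance.
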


\subsubsection*{Setting and conventions}

Throughout this paper, $p$ is a positive prime integer, and $q$ will always denote a power $p^{e}$ of $p$, where $e$ is a (variable) positive integer.  

The coordinates of a point $\vv{u} \in \RR^n$ are denoted $ \vv{u} = (u_1, \ldots, u_n)$; likewise, $\vv{0} = (0,\ldots, 0)$ and $\one = (1,\ldots, 1)$.
Inequalities between points in $\RR^n$ should be interpreted as a system of $n$ coordinatewise inequalities.
Similarly, we extend standard operations on numbers to points in a coordinatewise manner.  For instance, if $\vv{u} \in \RR^n$, then $\up{\vv{u}}=(\up{u_1},\ldots,\up{u_n})$, and if $\vv{u} \in \mathbb{Z}^n$ and $d \in \mathbb{Z}$ is positive, then
\[\lpr{\vv{u}}{\D}=(\lpr{u_1}{\D},\ldots,\lpr{u_n}{\D}),\]
where $\lpr{m}{\D}$ is the \emph{least positive residue} of an integer $m$ modulo $\D$.  

The (taxicab) \emph{norm} of $\vv{u}\in \NN^n$ is the number $\norm{\vv{u}}\coloneqq u_1 + \cdots + u_n$.  If $k$ is a positive integer, then a \emph{composition of $k$ of size $n$} is a point $\vv{u} \in \NN^n$ with $\vv{u} > \vv{0}$ and $\norm{\vv{u}} = k$.
The set of all compositions of $k$ of size $n$ is denoted $\comp(k,n)$.

The \emph{multinomial coefficient} associated to $k \in \NN$ and $\vv{u} \in \NN^n$ is  defined by
\[ \binom{k}{\vv{u}} =  \frac{k!}{u_1 ! \cdots u_n!} \]
when $\norm{\vv{u}} = k$, and we adopt the convention that $\binom{k}{\vv{u}}=0$ otherwise.

Finally, $\kk$ is a field of characteristic $p$ with $[\kk: \kk^p] < \infty$, and $R = \kk[x_1, \ldots, x_n]$ is a polynomial ring over $\kk$ with homogeneous maximal ideal $\idealm = \ideal{x_1, \ldots, x_n }$.
Given a point $\vv{u} \in \NN^n$, we write $x^{\vv{u}} = x_1^{u_1} \cdots x_n^{u_n}$, and if $\vv{u}> \vv{0}$, then we call
	\[
	  \diag(\vv{u}) = \ideal{x_1^{u_1}, \ldots, x_n^{u_n}} 
	\]
the \emph{diagonal ideal} associated to $\vv{u}$.  Notice that $\frob{\diag(\vv{u})}{q} = \diag(\vectorscalar{q}{\vv{u}})$.

\section{Frobenius powers and test ideals}

Here, we recall some basic facts regarding Frobenius powers and test ideals, emphasizing the aspects most relevant to this paper.  
For a more comprehensive overview, we refer the reader to \cite{hernandez+etal.frobenius_powers} and \cite{blickle+mustata+smith.discr_rat_FPTs}.  
In what follows, let $\ideala$ be a nonzero proper ideal of the polynomial ring $R$.

The \emph{Frobenius powers} and the \emph{test ideals} of $\ideala$ are families of ideals $\frob{\ideala}{t}$ and $\tau(\ideala^t)$ of $R$ parametrized by a nonnegative real number $t$.
Both $\frob{\ideala}{t}$ and $\tau(\ideala^t)$ equal $R$ when $t$ is sufficiently small, and are nested and non-increasing as $t$ increases.  
These families are also locally constant to the right, that is, $\frob{\ideala}{t}=\frob{\ideala}{s}$ and $\tau(\ideala^t) = \tau(\ideala^s)$ for every $t\geq 0$ and $s > t$ sufficiently close to $t$.

We now recall the definition of Frobenius powers.
Starting with integral exponents, if $m\in \NN$ has base $p$ expansion $m=m_0+m_1p+\cdots+m_rp^r$, then
	\[\frob{\ideala}{m}\coloneqq\ideala^{m_0}\frob{(\ideala^{m_1})}{p}\cdots 
		\frob{(\ideala^{m_r})}{p^r}.\]
If $\ideala = \ideal{h_1, \ldots, h_l}$, then $\frob{\ideala}{m}$ is the ideal of $R$ generated by all  $h^{\vv{s}} = h_{1}^{s_1} \cdots h_{l}^{s_l}$, where $\binom{m}{\vv{s}} \not\equiv 0 \bmod p$; see \cite[Proposition~3.5]{hernandez+etal.frobenius_powers}.  

If $t=m/q$, with $m\in \NN$, then $\frob{\ideala}{t}=\frob{(\frob{\ideala}{m})}{1/q}$.
Finally, for arbitrary $t\geq 0$, like test ideals, $\frob{\ideala}{t}$ is defined by approximating $t$ on the right by rational numbers of the above type. 
Explicitly, if $(t_k)$ is a sequence of rational numbers whose denominators are powers of $p$, and $(t_{k})$ converges to $t$ monotonically from above, then $\frob{\ideala}{t}=\bigcup_k \frob{\ideala}{t_k}=\frob{\ideala}{t_k}$ for $k\gg 0$.

Clearly, $\frob{\ideala}{m}\subseteq \ideala^m$ for each $m\in \NN$, and equality holds if $m<p$ or $\ideala$ is principal.
This implies that $\frob{\ideala}{t} \subseteq \tau(\ideala^t)$ for all $t\geq 0$, and equality holds when $\ideala$ is principal.   

We refer to the parameter values where the Frobenius powers ``drop'' as \emph{critical exponents} of $\ideala$.    
Explicitly, $\lambda > 0$ is a critical exponent of $\ideala$ if $\frob{\ideala}{\lambda - \varepsilon}$ properly contains $\frob{\ideala}{\lambda}$ for every $0 < \varepsilon \leq \lambda$. 
Analogously, the parameter values where the test ideals ``drop'' are called \emph{$F$-jumping exponents} of $\ideala$. 
Both the critical exponents and the $F$-jumping exponents of $\ideala$ form discrete sets of rational numbers; see \cite[Corollary~5.8]{hernandez+etal.frobenius_powers} and \cite[Theorem~3.1]{blickle+mustata+smith.discr_rat_FPTs}.

We will use the following concrete description of critical exponents. 
If $\idealb$ is a proper ideal of $R$ with $\ideala \subseteq \sqrt{\idealb}$,  let
 \[
 \mu(\ideala, \idealb, q) \coloneqq 
	 \max \big\{ m \in \NN \mid \frob{\ideala}{m} \not\subseteq \frob{\idealb}{q} \big\},
 \]
which is a well-defined integer.  
The limit
\[
\crit(\ideala, \idealb) \coloneqq \lim_{q \to \infty} \frac{\mu(\ideala, \idealb, q)  }{q} 
\]	
exists, and is called the \emph{critical exponent of $\ideala$ with respect to $\idealb$}.  
This limit is indeed a critical exponent of $\ideala$, as demonstrated by the equalities
\[ \crit(\ideala, \idealb) = \sup \big\{ t > 0 : \frob{\ideala}{t} \not \subseteq {\idealb}  \big\}
= \min \big\{ t > 0 : \frob{\ideala}{t} \subseteq \idealb \big\}.\]
Conversely, every critical exponent $\lambda$ of $\ideala$ is of the form $\crit(\ideala, \idealb)$ for some $\idealb$.  
Indeed, one may take $\idealb =\frob{\ideala}{\lambda}$.  

An analogous description exists for $F$-jumping exponents---in this context, called \emph{$F$-thresholds}---where $\mu(\ideala,\idealb,q)$ is replaced with
 \[
 \nu(\ideala, \idealb, q) \coloneqq 
 \max \big\{ m \in \NN \mid \ideala^{m} \not\subseteq \frob{\idealb}{q} \big\}.
 \]

\begin{proposition}
   \label{min comparison: P}
   If $\ideala  \subseteq \idealb$, then $\mu(\ideala, \idealb, p) = \min \{ \nu(\ideala, \idealb, p), p-1 \}$.
\end{proposition}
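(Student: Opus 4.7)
The plan is to exploit the fact that the definition of $\frob{\ideala}{m}$ becomes particularly simple in the regime $m<p$, where it just collapses to $\ideala^m$, while for $m\geq p$ the hypothesis $\ideala\subseteq\idealb$ forces $\frob{\ideala}{m}$ to land inside $\frob{\idealb}{p}$. Once these two observations are in place, the conclusion comes from a short case analysis depending on whether $\nu(\ideala,\idealb,p)$ is at least $p-1$ or not.

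More precisely, I would first record that if $m<p$, the base-$p$ expansion of $m$ is the single digit $m_0=m$, so directly from the definition $\frob{\ideala}{m}=\ideala^m$. Next, I would prove the key containment $\frob{\ideala}{m}\subseteq \frob{\idealb}{p}$ for every $m\geq p$. Writing $m=m_0+m_1p+\cdots+m_rp^r$ with $r\geq 1$, at least one index $j\geq 1$ has $m_j\geq 1$. Since $\ideala\subseteq\idealb$, we have $\ideala^{m_j}\subseteq\idealb$, hence
\[
\frob{(\ideala^{m_j})}{p^j}\subseteq \frob{\idealb}{p^j}\subseteq \frob{\idealb}{p},
\]
where the last containment uses the monotonicity of Frobenius powers in the exponent. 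Since $\frob{\ideala}{m}$ is the product of the factors $\ideala^{m_0},\frob{(\ideala^{m_1})}{p},\ldots,\frob{(\ideala^{m_r})}{p^r}$ and one of these factors lies in the ideal $\frob{\idealb}{p}$, the whole product does as well. This already gives the bound $\mu(\ideala,\idealb,p)\leq p-1$.

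With these pieces, the two sides of the claimed identity are easy to match. Set $\nu=\nu(\ideala,\idealb,p)$ and $\mu=\mu(\ideala,\idealb,p)$. If $\nu\geq p-1$, then $\ideala^{p-1}\not\subseteq\frob{\idealb}{p}$, and since $\ideala^{p-1}=\frob{\ideala}{p-1}$ we conclude $\mu\geq p-1$; combined with the upper bound, $\mu=p-1=\min\{\nu,p-1\}$. If instead $\nu<p-1$, then for any $m$ with $\nu<m<p$ we have $\frob{\ideala}{m}=\ideala^m\subseteq\ideala^{\nu+1}\subseteq\frob{\idealb}{p}$; together with the containment proven above for $m\geq p$, this forces $\mu\leq\nu$. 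On the other hand, $\frob{\ideala}{\nu}=\ideala^\nu\not\subseteq\frob{\idealb}{p}$ gives $\mu\geq\nu$, so $\mu=\nu=\min\{\nu,p-1\}$.

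The only genuinely non-formal step is the containment $\frob{\ideala}{m}\subseteq\frob{\idealb}{p}$ for $m\geq p$, and I expect that to be the main obstacle to double-check carefully: one must observe that at least one digit $m_j$ with $j\geq 1$ is nonzero and that the corresponding factor already sits inside $\frob{\idealb}{p}$, using $\ideala\subseteq\idealb$ (rather than merely $\ideala\subseteq\sqrt{\idealb}$, which would not suffice). Everything else is arithmetic on the definitions of $\mu$ and $\nu$.
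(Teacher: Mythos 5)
Your proof is correct, and the overall structure matches the paper's: first establish the upper bound $\mu(\ideala,\idealb,p)\leq p-1$ from the hypothesis $\ideala\subseteq\idealb$, then split into cases according to whether $\nu(\ideala,\idealb,p)$ is at least $p-1$. The one small divergence is in how you obtain the upper bound: you prove $\frob{\ideala}{m}\subseteq\frob{\idealb}{p}$ for all $m\geq p$ directly from the base-$p$ expansion, by locating a nonzero digit $m_j$ with $j\geq 1$ and observing $\frob{(\ideala^{m_j})}{p^j}\subseteq\frob{\idealb}{p^j}\subseteq\frob{\idealb}{p}$, whereas the paper just notes $\frob{\ideala}{p}\subseteq\frob{\idealb}{p}$ and relies on the (already-recorded) monotonicity of $t\mapsto\frob{\ideala}{t}$ to get the same bound. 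Your version is more hands-on and self-contained, but it buys the same conclusion; the key point in both is identical, namely that the strong containment $\ideala\subseteq\idealb$, rather than merely $\ideala\subseteq\sqrt{\idealb}$, is what forces the cap at $p-1$.
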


\begin{proof}
Set $\mu = \mu(\ideala, \idealb, p)$ and $\nu = \nu(\ideala, \idealb, p)$.  
If $\nu \geq p-1$, then $\frob{\ideala}{p-1} = \ideala^{p-1} \not \subseteq \frob{\idealb}{p}$, while the assumption that $\ideala \subseteq \idealb$ implies that $\frob{\ideala}{p} \subseteq \frob{\idealb}{p}$, showing that $\mu = p-1$.  
On the other hand, if $\nu \leq p-1$, then $\frob{\ideala}{\nu} = \ideala^{\nu} \not \subseteq \frob{\idealb}{p}$, while  $\frob{\ideala}{\nu+1} \subseteq \ideala^{\nu+1}  \subseteq \frob{\idealb}{p}$, so $\mu = \nu$.
\end{proof}

\begin{proposition}
\label{relating mu's: P} 
If $\ideala \subseteq \sqrt{\idealb}$, then $\mu(\ideala, \idealb, q p^e) = \mu(\ideala, \idealb, q) \cdot p^e + E$ for some integer $E$ satisfying $0 \leq E \leq p^e-1$. 
\end{proposition}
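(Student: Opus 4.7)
\medskip

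\noindent\emph{Plan.} Write $M=\mu(\ideala,\idealb,q)$, so that by definition $\frob{\ideala}{M}\not\subseteq \frob{\idealb}{q}$ but $\frob{\ideala}{M+1}\subseteq \frob{\idealb}{q}$. The statement amounts to the two-sided bound
\[
M p^e \;\leq\; \mu(\ideala,\idealb,q p^e) \;\leq\; (M+1)p^e - 1,
\]
which I will establish by transferring the defining (non)containments along the operation $\frob{(-)}{p^e}$.

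The key ancillary observation is the identity $\frob{\ideala}{mp^e} = \frob{(\frob{\ideala}{m})}{p^e}$ for every $m\in\NN$. This is immediate from the base-$p$ recipe recalled right before \Cref{min comparison: P}: if $m = m_0 + m_1 p + \cdots + m_r p^r$, then $mp^e = m_0 p^e + m_1 p^{e+1} + \cdots + m_r p^{r+e}$, and both sides of the identity collapse to $\frob{(\ideala^{m_0})}{p^e}\frob{(\ideala^{m_1})}{p^{e+1}} \cdots \frob{(\ideala^{m_r})}{p^{r+e}}$.

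For the upper bound, apply $\frob{(-)}{p^e}$ to the containment $\frob{\ideala}{M+1}\subseteq \frob{\idealb}{q}$; using the identity above (with $q$ in place of $p^e$ on the right) this gives
\[
\frob{\ideala}{(M+1)p^e} \;=\; \frob{(\frob{\ideala}{M+1})}{p^e} \;\subseteq\; \frob{(\frob{\idealb}{q})}{p^e} \;=\; \frob{\idealb}{qp^e},
\]
so $\mu(\ideala,\idealb,qp^e)\leq (M+1)p^e-1$. For the lower bound, I use that $R$ is regular, so the Frobenius is flat and $\frob{(-)}{p^e}$ both preserves and reflects containments of ideals. Since $\frob{\ideala}{M}\not\subseteq \frob{\idealb}{q}$, this gives $\frob{(\frob{\ideala}{M})}{p^e}\not\subseteq \frob{(\frob{\idealb}{q})}{p^e}$, which by the identity translates to $\frob{\ideala}{Mp^e}\not\subseteq \frob{\idealb}{qp^e}$, i.e., $\mu(\ideala,\idealb,qp^e)\geq Mp^e$. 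Combining the two bounds yields $\mu(\ideala,\idealb,qp^e)=Mp^e + E$ with $0\leq E\leq p^e-1$, as required; note that the hypothesis $\ideala\subseteq\sqrt{\idealb}$ is used only to ensure that both $\mu$'s are finite, so that the argument takes place in $\NN$.

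The only step requiring any care is the reflection of non-containment along $\frob{(-)}{p^e}$, and this is standard from the flatness of Frobenius over a regular ring; no serious obstacle arises.
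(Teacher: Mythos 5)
Your proof is correct and follows essentially the same route as the paper's: the upper bound comes from applying $\frob{(-)}{p^e}$ to the containment $\frob{\ideala}{M+1}\subseteq\frob{\idealb}{q}$ via the identity $\frob{\ideala}{mp^e}=\frob{(\frob{\ideala}{m})}{p^e}$, and the lower bound comes from the reflection of non-containment afforded by flatness of Frobenius. You spell out the identity and the role of the hypothesis $\ideala\subseteq\sqrt{\idealb}$ a bit more explicitly than the paper, but the argument is the same.
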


\begin{proof}
	If $m = \mu(\ideala, \idealb, q)$, then $\frob{\ideala}{m} \not \subseteq \frob{\idealb}{q}$, and so $\frob{\ideala}{mp^e} = 		
                \frob{(\frob{\ideala}{m})}{p^e} \not \subseteq \frob{\idealb}{qp^e}$, due to the flatness of the Frobenius map over $R$.
	On the other hand, $\frob{\ideala}{m+1} \subseteq \frob{\idealb}{q}$, so that 
		$\frob{\ideala}{mp^e + p^e} = \frob{(\frob{\ideala}{m+1})}{p^e} \subseteq \frob{\idealb}{qp^e}$.
\end{proof}

The remainder of this section, and article, concerns the test ideals, Frobenius powers, and related numerical invariants associated to monomial ideals.  
A fact that will be frequently used throughout is that Frobenius powers of monomial ideals are themselves monomial ideals.
This is easy to see, by observing that integral Frobenius powers and Frobenius roots both have like property. 

Before stating the next result, we introduce some simplifying notation.

\begin{notation}  
	Consider $\vv{u} \in \NN^n$ with $\vv{u}>\vv{0}$.
	When dealing with any of the numerical invariants discussed above that involve $\diag(\vv{u}) = \ideal{x_1^{u_1}, \ldots, x_n^{u_n}}$, we will replace all occurrences of $\diag(\vv{u})$ in the notation with~$\vv{u}$.  
	For instance, we write $\mu(\ideala, \vv{u}, q)$ instead of $\mu(\ideala, \diag(\vv{u}), q)$.  
\end{notation}

The following may be regarded as a refinement of \Cref{relating mu's: P}.

\begin{proposition}
\label{building up: P}
If $\vv{u}$ and $\vv{v}$ are points in $\NN^n$ with positive coordinates, 
and $m \in \NN$ with $x^{\vectorscalar{q}{\vv{u}}-\vv{v}} \in \frob{\ideala}{m}$, then 
$\mu(\ideala,\vv{u}, qp^e) \geq m p^e + \min \{ p^e-1, \mu(\ideala, \vv{v}, p^e) \}$.
\end{proposition}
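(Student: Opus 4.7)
The plan is to produce an explicit monomial witness to $\frob{\ideala}{mp^e + k} \not\subseteq \diag(qp^e\vv{u})$, where $k = \min\{p^e - 1,\, \mu(\ideala,\vv{v},p^e)\}$. The witness will be a product of two pieces, one coming from the hypothesis $x^{q\vv{u}-\vv{v}} \in \frob{\ideala}{m}$ raised to the $p^e$-th power, and one coming from an element realizing the failure of $\frob{\ideala}{\mu(\ideala,\vv{v},p^e)} \subseteq \diag(p^e\vv{v})$.

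First I would use flatness of Frobenius (exactly as in the proof of \Cref{relating mu's: P}) to pass from $x^{q\vv{u}-\vv{v}} \in \frob{\ideala}{m}$ to $(x^{q\vv{u}-\vv{v}})^{p^e} = x^{qp^e\vv{u} - p^e\vv{v}} \in \frob{(\frob{\ideala}{m})}{p^e}$. Next, setting $\mu' = \mu(\ideala,\vv{v},p^e)$, the defining property of $\mu'$ gives $\frob{\ideala}{\mu'} \not\subseteq \diag(p^e\vv{v})$; since $k \leq \mu'$ implies $\frob{\ideala}{k} \supseteq \frob{\ideala}{\mu'}$, and since both ideals are monomial (Frobenius powers of monomial ideals are monomial), there exists a monomial $x^{\vv{w}} \in \frob{\ideala}{k}$ with $\vv{w} < p^e\vv{v}$ coordinatewise.

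The key structural input is that, because $0 \leq k < p^e$, the base-$p$ expansion of $mp^e + k$ is the concatenation of the expansions of $k$ and $m$ (the latter shifted by $e$ places), so the definition of Frobenius power yields
\[ \frob{\ideala}{mp^e + k} = \frob{\ideala}{k} \cdot \frob{(\frob{\ideala}{m})}{p^e}. \]
Therefore $x^{\vv{w}} \cdot x^{qp^e\vv{u}-p^e\vv{v}} = x^{qp^e\vv{u}-p^e\vv{v}+\vv{w}}$ lies in $\frob{\ideala}{mp^e+k}$. Because $\vv{w} < p^e\vv{v}$, this exponent vector is strictly less than $qp^e\vv{u}$ in every coordinate, so the monomial does not lie in $\diag(qp^e\vv{u})$, proving the inequality $\mu(\ideala,\vv{u},qp^e) \geq mp^e + k$.

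The proof is essentially bookkeeping, and the main obstacle is conceptual rather than technical: the choice of $k$ as a minimum is forced precisely so that both needed properties hold simultaneously — the condition $k < p^e$ is what makes the base-$p$ factorization of $\frob{\ideala}{mp^e+k}$ clean, while the condition $k \leq \mu(\ideala,\vv{v},p^e)$ is what guarantees a witness monomial $x^{\vv{w}}$ strictly below $p^e\vv{v}$. Recognizing that these two roles of $k$ are independent, and that the factorization of Frobenius powers is the right multiplicative tool to combine the two witnesses, is the crux.
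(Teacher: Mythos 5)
Your argument is essentially the same as the paper's: factor $\frob{\ideala}{mp^e+k}$ via the base-$p$ structure (which is exactly the equality $\frob{\ideala}{mp^e+l}=\frob{\ideala}{mp^e}\frob{\ideala}{l}$ used in the paper, since $\frob{\ideala}{mp^e}=\frob{(\frob{\ideala}{m})}{p^e}$), multiply a witness from $\frob{\ideala}{k}$ lying outside $\diag(\vectorscalar{p^e}{\vv{v}})$ by $x^{\vectorscalar{qp^e}{\vv{u}}-\vectorscalar{p^e}{\vv{v}}}$, and check the product escapes $\diag(\vectorscalar{qp^e}{\vv{u}})$. Two minor points: your insistence on a \emph{monomial} witness $x^{\vv{w}}$ tacitly assumes $\ideala$ is a monomial ideal, which the proposition does not require (the paper takes a general element $g\in\frob{\ideala}{l}\setminus\diag(\vectorscalar{p^e}{\vv{v}})$, with $f^{p^e}g\notin\diag(\vectorscalar{qp^e}{\vv{u}})$ following from the colon computation $\diag(\vectorscalar{qp^e}{\vv{u}}):x^{\vectorscalar{qp^e}{\vv{u}}-\vectorscalar{p^e}{\vv{v}}}=\diag(\vectorscalar{p^e}{\vv{v}})$), and the appeal to flatness is unnecessary for your first step, since $f\in I$ already implies $f^{p^e}\in\frob{I}{p^e}$ directly from the definition of the bracket power.
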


\begin{proof}
    Set $f = x^{\vectorscalar{q}{\vv{u}}-\vv{v}} \in \frob{\ideala}{m}$, and let $l$ be the minimum appearing in the statement.
    As $l \leq \mu(\ideala, \vv{v}, p^e)$, there exists $g \in \frob{\ideala}{l}$ with $g \notin \diag( \vectorscalar{p^e}{\vv{v}})$.
    As $l \leq p^e-1$, we have that $\frob{\ideala}{mp^e+l} = \frob{\ideala}{mp^e} \frob{\ideala}{l}$, which contains the polynomial $h = f^{p^e} g$.
    However, $h\notin \diag(\vectorscalar{qp^e}{\vv{u}})$, so $\frob{\ideala}{mp^{e}+l} \not\subseteq \diag(\vectorscalar{qp^e}{\vv{u}})$, and consequently $\mu(\ideala, \vv{u}, q p^e) \geq m p^e + l$.
\end{proof}

The following proposition characterizes the monomials in a Frobenius power of a monomial ideal in terms of critical exponents with respect to diagonal ideals.

\begin{proposition}\label{crits of mon ideals: P}
	If $\ideala$ is a monomial ideal and $\vv{u}\in \NN^n$, then $x^\vv{u}\in \frob{\ideala}{\lambda}$ if and only if $\crit(\ideala,\vv{u}+\one)>\lambda$.	
\end{proposition}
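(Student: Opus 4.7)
The plan is to chain together three equivalences, reducing the claim to the elementary observation that a monomial lies outside $\diag(\vv{u}+\one)$ exactly when its exponent vector is coordinatewise $\leq \vv{u}$.

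First, I would establish the equivalence
\[
  \crit(\ideala, \vv{u}+\one) > \lambda \iff \frob{\ideala}{\lambda} \not\subseteq \diag(\vv{u}+\one).
\]
This is immediate from the characterization $\crit(\ideala, \idealb) = \min\{t > 0 : \frob{\ideala}{t} \subseteq \idealb\}$ recorded earlier, together with the monotonicity of the chain $\frob{\ideala}{t}$: if $\crit > \lambda$ then $\lambda$ lies strictly below the minimum, so the inclusion fails; if $\crit \leq \lambda$ then $\frob{\ideala}{\lambda} \subseteq \frob{\ideala}{\crit} \subseteq \diag(\vv{u}+\one)$ by the decreasing nature of Frobenius powers. (Here $\ideala \subseteq \idealm = \sqrt{\diag(\vv{u}+\one)}$, so $\crit(\ideala,\vv{u}+\one)$ is defined.)

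Second, I would use that $\frob{\ideala}{\lambda}$ is a monomial ideal, a fact observed in the paragraph preceding the proposition. Hence $\frob{\ideala}{\lambda} \not\subseteq \diag(\vv{u}+\one)$ if and only if some monomial $x^{\vv{w}}$ of $\frob{\ideala}{\lambda}$ lies outside $\diag(\vv{u}+\one) = \ideal{x_1^{u_1+1}, \ldots, x_n^{u_n+1}}$. The monomials missing from the latter ideal are exactly those $x^{\vv{w}}$ with $w_i \leq u_i$ for every $i$, that is, those with $\vv{w} \leq \vv{u}$.

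Finally, I would observe that the existence of $\vv{w} \leq \vv{u}$ with $x^{\vv{w}} \in \frob{\ideala}{\lambda}$ is equivalent to $x^{\vv{u}} \in \frob{\ideala}{\lambda}$: one direction uses that $\frob{\ideala}{\lambda}$ is an ideal, so $x^{\vv{u}} = x^{\vv{w}} \cdot x^{\vv{u}-\vv{w}}$ belongs to it; the other is trivial by taking $\vv{w} = \vv{u}$. Concatenating the three equivalences yields the claim. There is no real obstacle here---the whole argument is bookkeeping around the defining formula for $\crit(\ideala,\idealb)$ and the monomiality of $\frob{\ideala}{\lambda}$.
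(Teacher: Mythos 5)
Your proof is correct and follows essentially the same route as the paper's: both arguments reduce to the characterization of $\crit(\ideala,\idealb)$ as the first parameter where the Frobenius power lands in $\idealb$, use the monomiality of $\frob{\ideala}{\lambda}$ to extract a monomial $x^{\vv{w}}$ with $\vv{w}\leq\vv{u}$ outside $\diag(\vv{u}+\one)$, and conclude via divisibility. You simply spell out the forward direction (which the paper calls ``immediate'') and organize the argument as a chain of equivalences rather than two separate implications.
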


\begin{proof}
	The forward implication is immediate, since $x^\vv{u}\notin \diag(\vv{u}+\one)$.
	Conversely, if $\crit(\ideala, \vv{u}+\one) > \lambda$, then $\frob{\ideala}{\lambda}\not\subseteq\diag(\vv{u}+\one)$, so there exists a monomial $x^{\vv{v}} \in \frob{\ideala}{\lambda}$ with $\vv{v} < \vv{u} + \one$.
	Equivalently, $\vv{v} \leq \vv{u}$, hence $x^{\vv{u}}\in \ideal{x^{\vv{v}}}\subseteq\frob{\ideala}{\lambda}$. 
\end{proof}

The following result clarifies the task of computing all of the critical exponents of certain monomial ideals.

\begin{proposition}
\label{crits of m-primary ideal: P}
	Every critical exponent of an $\idealm$-primary monomial ideal $\ideala$ is of the form 
		$\crit(\ideala, \vv{u})$, where $\vv{u}$ is a point in $\NN^n$ with $\vv{u} > \vv{0}$.
\end{proposition}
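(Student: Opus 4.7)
The plan is to reduce from the general description of critical exponents as $\crit(\ideala, \idealb)$ to the case where $\idealb$ is a diagonal ideal, by combining \Cref{crits of mon ideals: P} with the $\idealm$-primary hypothesis.

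First, set $\idealb \coloneqq \frob{\ideala}{\lambda}$, so that $\lambda = \crit(\ideala, \idealb)$; I would check that $\idealb$ is a proper $\idealm$-primary monomial ideal. Monomiality was recalled above, and properness follows from $\lambda$ being a critical exponent. For $\idealm$-primariness, right-continuity of Frobenius powers lets me write $\idealb = \frob{(\frob{\ideala}{m})}{1/q}$ for some $m, q \in \NN$; since $\ideala$ is $\idealm$-primary, it contains a positive power $x_i^{N_i}$ of each variable, and \cite[Proposition~3.5]{hernandez+etal.frobenius_powers} then places $x_i^{m N_i}$ in $\frob{\ideala}{m}$, a property easily seen to be inherited by the Frobenius root as long as the root is itself proper.

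Because $\idealb$ is $\idealm$-primary, only finitely many monomials $x^{\vv{w}_1}, \ldots, x^{\vv{w}_N}$ lie outside $\idealb$, and $\frob{\ideala}{t} \subseteq \idealb$ holds precisely when $x^{\vv{w}_i} \notin \frob{\ideala}{t}$ for every $i$. By \Cref{crits of mon ideals: P}, the latter is equivalent to $t \geq \crit(\ideala, \vv{w}_i + \one)$ for each $i$. Hence
\[
   \lambda \;=\; \crit(\ideala, \idealb) \;=\; \max_{1 \leq i \leq N} \crit(\ideala, \vv{w}_i + \one),
\]
and as a maximum over finitely many values it is achieved at some index $i_0$, giving $\lambda = \crit(\ideala, \vv{u})$ with $\vv{u} = \vv{w}_{i_0} + \one > \vv{0}$. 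I expect the only mildly technical step to be verifying the $\idealm$-primariness of $\frob{\ideala}{\lambda}$; the remainder reduces to a finite-maximum computation directly powered by \Cref{crits of mon ideals: P}.
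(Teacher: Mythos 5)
Your proof is correct, but it takes a genuinely different route from the paper's at both of its main steps. For $\idealm$-primariness of $\idealb=\frob{\ideala}{\lambda}$, the paper simply observes that $\idealb\supseteq\frob{\ideala}{q}$ for any $p$-power $q>\lambda$, and $\frob{\ideala}{q}$ has radical $\idealm$; that is shorter and avoids tracing a power of each variable through $\frob{\ideala}{m}$ and then through the Frobenius root as you do (though your version also works — once you note that in a monomial ideal a pure power $x_i^M$ can only be a multiple of a generator $x^{q\vv{a}}$ with $\vv{a}$ supported on $\canvec_i$). For the heart of the argument, the paper decomposes $\idealb$ as a finite intersection of diagonal ideals $\diag(\vv{u}_i)$ (primary decomposition of a monomial ideal), uses that standard Frobenius powers commute with intersections to get $\mu(\ideala,\idealb,q)=\max_i\mu(\ideala,\vv{u}_i,q)$, and deduces $\lambda=\max_i\crit(\ideala,\vv{u}_i)$. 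You instead use the finitely many standard monomials $x^{\vv{w}_1},\dots,x^{\vv{w}_N}$ outside the Artinian ideal $\idealb$ and feed them through \Cref{crits of mon ideals: P} to get $\lambda=\max_i\crit(\ideala,\vv{w}_i+\one)$. Both are sound and essentially dual ways of making $\idealb$ finitary; your route is a bit more elementary (no primary decomposition, no intersection-compatibility of Frobenius powers) and makes more direct use of \Cref{crits of mon ideals: P}, while the paper's version records the useful extra fact that $\idealb$ itself is an intersection of the relevant diagonal ideals.
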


\begin{proof}  
	If $\lambda$ is a critical exponent of $\ideala$, then $\lambda = \crit(\ideala,\idealb)$, with $\idealb = \frob{\ideala}{\lambda}$ a proper monomial ideal.
	Moreover,  $\idealb = \frob{\ideala}{\lambda} \supseteq \frob{\ideala}{q}$ for every $q > \lambda$, and so $\idealb$ must also be $\idealm$-primary.  
	Being a monomial ideal, $\idealb$ is a finite intersection of ideals generated by powers of the variables \cite[Lemma~5.18]{miller+sturmfels.combinatorial_CA}. 
	As $\idealb$ is $\idealm$-primary, the intersectands must also be $\idealm$-primary, and therefore diagonal ideals, so $\idealb = \diag(\vv{u}_1) \cap \cdots \cap \diag(\vv{u}_l)$, for some $\vv{u}_1,\ldots,\vv{u}_l>\vv{0}$. 
	As standard Frobenius powers commute with intersections in our polynomial ring, it is easy to see that $\mu(\ideala, \idealb, q) = \max \mu(\ideala, \vv{u}_i, q)$, whence $\lambda = \max \crit(\ideala, \vv{u}_i)$.  
\end{proof}

%%%%%%%%%%%%%%%%%%%%%%%%%%%%%%%%%%%%%%%%%%%%%%%%%%%%%%%
\section{A power of the homogeneous maximal ideal}

This section is dedicated to computing the critical exponents and Frobenius powers of the ideal $\idealm^\D$, where $\D$ is some fixed positive integer.  
We call upon the following observation in our calculation. 

\begin{lemma}
   \label{pigeonhole: L}
   If $k$ is an integer with $k \geq n$, then
   \[ \idealm^{k-n+1} = \bigcap \, \diag(\vv{u}), \]
   where the intersection is taken over all points $\vv{u} \in \comp(k,n)$.
\end{lemma}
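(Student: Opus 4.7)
The plan is to show both sides contain the same monomials, which suffices since both are monomial ideals (the right-hand side being an intersection of monomial ideals).

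I would start from the observation that a monomial $x^{\vv{v}}$ belongs to $\diag(\vv{u})$ precisely when $v_i \geq u_i$ for some index $i$. Consequently, $x^{\vv{v}}$ fails to lie in the intersection $\bigcap_{\vv{u} \in \comp(k,n)} \diag(\vv{u})$ if and only if there exists $\vv{u} \in \comp(k,n)$ with $v_i < u_i$ for every $i$, i.e., $\vv{v} < \vv{u}$ coordinatewise.

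Next I would translate this coordinatewise strict inequality into a statement about norms. If $\vv{v} < \vv{u}$ with $\vv{u} \in \comp(k,n)$, then $v_i + 1 \leq u_i$ for every $i$, and summing gives $\norm{\vv{v}} + n \leq \norm{\vv{u}} = k$, so $\norm{\vv{v}} \leq k - n$. Conversely, assuming $\norm{\vv{v}} \leq k-n$, I would explicitly construct a suitable composition by setting $u_1 = v_1 + 1 + (k - n - \norm{\vv{v}})$ and $u_i = v_i + 1$ for $i > 1$; this $\vv{u}$ has positive coordinates, strictly dominates $\vv{v}$, and has norm $k$, so it lies in $\comp(k,n)$. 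This shows the equivalence: $x^{\vv{v}}$ is outside the intersection if and only if $\norm{\vv{v}} \leq k - n$.

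Since $x^{\vv{v}} \in \idealm^{k-n+1}$ is equivalent to $\norm{\vv{v}} \geq k - n + 1$, the two monomial conditions coincide, finishing the proof. There is no real obstacle here; the only subtlety worth flagging is the pigeonhole-style observation (hinted at by the lemma's name) that a composition of $k$ coordinatewise exceeding $\vv{v}$ exists exactly when the ``slack'' $k - n - \norm{\vv{v}}$ is nonnegative, so the hypothesis $k \geq n$ is what allows the intersection to be nontrivial.
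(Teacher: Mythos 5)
Your proposal is correct and follows essentially the same approach as the paper's proof: characterize membership of a monomial $x^{\vv{v}}$ in each side by the norm condition $\norm{\vv{v}} \leq k-n$, using the observation that $x^{\vv{v}} \notin \diag(\vv{u})$ iff $\vv{v} < \vv{u}$ coordinatewise. The only (cosmetic) difference is that you explicitly construct the composition $\vv{u}$ dominating $\vv{v}+\one$, whereas the paper merely asserts its existence.
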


\begin{proof}  
	Let $\vv{v} \in \NN^n$.  
	If $x^{\vv{v}} \notin \diag(\vv{u})$ for some $\vv{u} \in \comp(k,n)$, then $\vv{v} \leq \vv{u} - \one$.  
	Then $\norm{\vv{v}} \leq \norm{\vv{u}-\one} = k-n$, so that $x^{\vv{v}} \notin \idealm^{k-n+1}$.
	Conversely, if $x^{\vv{v}} \notin \idealm^{k-n+1}$, then $\norm{\vv{v}} \leq k-n$, so that 
		$\norm{\vv{v} + \one} = \norm{\vv{v}} + n \leq k$.   
	This condition guarantees the existence of a point $\vv{u} \in \comp(k, n)$ for which 
		$\vv{u} \geq \vv{v}+\one$, so that $x^{\vv{v}} \notin \diag(\vv{u})$.
\end{proof}

To simplify our notation in this section, given a point $\vv{u} \in \NN^n$ with $\vv{u} > \vv{0}$, we write $\nu(\vv{u}, q)$ and $\mu(\vv{u}, q)$ instead of $\nu(\idealm^\D, \vv{u}, q)$ and $\mu(\idealm^\D, \vv{u}, q)$, respectively.

\subsection{Critical exponents}
\label{critical exponents of m^d: subsection}

We begin by characterizing the critical exponents of $\idealm^\D$ that lie in the unit interval.  

\begin{proposition}\label{small crits of m^D: P}
	Every critical exponent of $\idealm^\D$ in the unit interval is of the form 
	$\crit(\idealm^\D, \vv{u})$, where $\vv{u} \in \NN^n$ satisfies $\vv{u} > \vv{0}$ and 
	$\norm{\vv{u}} \leq \D+n-1$.
\end{proposition}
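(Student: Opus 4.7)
The plan is to leverage \Cref{crits of m-primary ideal: P}, which already guarantees that every critical exponent of $\idealm^\D$ has the form $\crit(\idealm^\D, \vv{u})$ for some $\vv{u} > \vv{0}$; what remains is to control the size of $\vv{u}$ under the additional hypothesis $\lambda \leq 1$. The key observation driving the argument is that $\frob{(\idealm^\D)}{1} = \idealm^\D$, so any diagonal ideal that arises in representing a critical exponent $\lambda \leq 1$ must contain $\idealm^\D$, and a short combinatorial count then forces $\norm{\vv{u}} \leq \D + n - 1$.

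Concretely, I would fix a critical exponent $\lambda \in (0,1]$ and set $\idealb = \frob{(\idealm^\D)}{\lambda}$. Following the proof of \Cref{crits of m-primary ideal: P}, I would write $\idealb = \diag(\vv{u}_1) \cap \cdots \cap \diag(\vv{u}_l)$ with each $\vv{u}_i > \vv{0}$, use the identity $\mu(\idealm^\D, \idealb, q) = \max_i \mu(\idealm^\D, \vv{u}_i, q)$ to conclude that $\lambda = \max_i \crit(\idealm^\D, \vv{u}_i)$, and pick an index $j$ realizing this maximum.

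Next, since Frobenius powers are non-increasing in the exponent and $\lambda \leq 1$, I get $\idealb \supseteq \frob{(\idealm^\D)}{1} = \idealm^\D$, and hence $\diag(\vv{u}_j) \supseteq \idealm^\D$. The final step is to convert this containment into the desired bound: if instead $\norm{\vv{u}_j} \geq \D + n$, then $\norm{\vv{u}_j - \one} \geq \D$, so there exists $\vv{v} \in \NN^n$ with $\vv{0} \leq \vv{v} \leq \vv{u}_j - \one$ and $\norm{\vv{v}} = \D$; the monomial $x^{\vv{v}}$ then sits in $\idealm^\D \setminus \diag(\vv{u}_j)$, a contradiction. (This is essentially the ``easy direction'' of \Cref{pigeonhole: L}, specialized to $k = \D + n - 1$.)

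There is no real obstacle here beyond recognizing the correct entry point: once one sees that $\lambda \leq 1$ forces $\frob{(\idealm^\D)}{\lambda}$ to contain $\idealm^\D$, the rest is a packaging of \Cref{crits of m-primary ideal: P} with a one-line lattice-point argument. One minor point worth double-checking is that the decomposition of $\idealb$ into diagonal ideals need not be irredundant---but this is harmless, because \emph{every} $\diag(\vv{u}_i)$ in any such decomposition contains $\idealb \supseteq \idealm^\D$, so the norm bound automatically applies to the chosen index $j$.
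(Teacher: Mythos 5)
Your proof is correct and takes essentially the same route as the paper's: invoke \Cref{crits of m-primary ideal: P} to get the form $\crit(\idealm^\D, \vv{u})$, observe that $\lambda \leq 1$ forces $\idealm^\D \subseteq \diag(\vv{u})$, and convert that containment into the bound $\norm{\vv{u}} \leq \D + n - 1$. The only cosmetic difference is that the paper cites \Cref{pigeonhole: L} for the last step, while you give the (equivalent) one-line lattice-point argument directly; you also unpack the internals of \Cref{crits of m-primary ideal: P} rather than citing it as a black box, which is fine but unnecessary.
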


\begin{proof}
   By \Cref{crits of m-primary ideal: P}, every critical exponent of $\idealm^\D$ is of the form $\crit(\idealm^\D, \vv{u})$, for some $\vv{u} \in \NN^n$ with $\vv{u} > \vv{0}$.
   But $\crit(\idealm^\D, \vv{u})\leq 1$  precisely when $\idealm^\D \subseteq \diag(\vv{u})$, a condition equivalent to $\norm{\vv{u}} \leq \D+n-1$, by \Cref{pigeonhole: L}.
\end{proof}

Our objective is to compute the critical exponents appearing in \Cref{small crits of m^D: P}.  We begin by first dispensing with a degenerate case, in which the  norm of the point $\vv{u}$ can be regarded as ``too large.''

\begin{proposition}
\label{degenerate m^D: P}
	If $\vv{u} \in \NN^n$ satisfies $\vv{u} > \vv{0}$ and $\D \leq \norm{\vv{u}} \leq \D+n-1$, 
		then $\mu(\vv{u}, q) = q-1$ for every $q$.  
	In particular, $\crit(\idealm^\D, \vv{u}) = 1$.
\end{proposition}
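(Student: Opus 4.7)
The plan is to prove the two inequalities $\mu(\vv{u},q) \leq q-1$ and $\mu(\vv{u},q) \geq q-1$ separately; the statement $\crit(\idealm^\D,\vv{u}) = 1$ then follows immediately by dividing by $q$ and letting $q \to \infty$ in the definition of the critical exponent.

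For the upper bound, I would show $\frob{(\idealm^\D)}{q} \subseteq \diag(\vectorscalar{q}{\vv{u}})$, which would force $\mu(\vv{u},q) \leq q-1$. Since $\frob{(\idealm^\D)}{q}$ is a monomial ideal generated by $x^{\vectorscalar{q}{\vv{a}}}$ as $\vv{a}$ ranges over points in $\NN^n$ with $\norm{\vv{a}}=\D$, it suffices to verify that each such generator lies in $\diag(\vectorscalar{q}{\vv{u}})$, i.e., $a_i \geq u_i$ for some coordinate $i$. If this failed, then $a_i \leq u_i - 1$ for all $i$, and summing would yield $\D = \norm{\vv{a}} \leq \norm{\vv{u}} - n \leq (\D+n-1) - n = \D-1$, a contradiction with the hypothesis $\norm{\vv{u}} \leq \D+n-1$.

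For the lower bound, I would exploit the other half of the hypothesis, namely $\norm{\vv{u}} \geq \D$, to select $\vv{a} \in \NN^n$ with $\vv{a} \leq \vv{u}$ and $\norm{\vv{a}}=\D$ (a greedy construction, starting from $\vv{u}$ and peeling off $\norm{\vv{u}}-\D$ units). The monomial $x^{\vv{a}}$ is then among the generators of $\idealm^\D$. Applying the description of integral Frobenius powers recalled in Section~2 to $\vv{s}=(q-1,0,\ldots,0)$ with multinomial coefficient $\binom{q-1}{\vv{s}}=1 \not\equiv 0 \bmod p$, we obtain $x^{(q-1)\vv{a}} = (x^{\vv{a}})^{q-1} \in \frob{(\idealm^\D)}{q-1}$. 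Since $(q-1)a_i \leq (q-1)u_i < qu_i$ for every $i$, this monomial does not lie in $\diag(\vectorscalar{q}{\vv{u}})$, which witnesses $\frob{(\idealm^\D)}{q-1} \not\subseteq \diag(\vectorscalar{q}{\vv{u}})$ and thus $\mu(\vv{u},q) \geq q-1$.

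I do not anticipate any serious obstacle: both the existence of $\vv{a}$ and the non-vanishing multinomial coefficient are elementary, and the two halves of the norm hypothesis split cleanly across the two bounds. The one place to be careful is confirming that $x^{(q-1)\vv{a}}$ is genuinely produced by the generator-description of $\frob{(\idealm^\D)}{q-1}$; this is handled by the observation that for any generator $h$ of an ideal $\ideala$, the element $h^{q-1}$ always lies in $\frob{\ideala}{q-1}$, corresponding to the trivial composition of $q-1$.
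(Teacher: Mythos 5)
Your proof is correct and follows essentially the same strategy as the paper's: the upper bound $\mu(\vv{u},q)\leq q-1$ from the hypothesis $\norm{\vv{u}}\leq \D+n-1$ forcing $\idealm^\D\subseteq\diag(\vv{u})$, and the lower bound by exhibiting a monomial in $\frob{(\idealm^\D)}{q-1}\setminus\diag(\vectorscalar{q}{\vv{u}})$. The only cosmetic difference is that the paper uses $x^{(q-1)\vv{u}}$ directly as the witness (valid since $x^{\vv{u}}\in\idealm^\D$), whereas you first trim $\vv{u}$ down to a point $\vv{a}$ of norm exactly $\D$; both work.
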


\begin{proof}  
	The upper bound on the norm of $\vv{u}$ implies that $\idealm^\D\subseteq \diag(\vv{u})$, 
		and so $\mu(\vv{u}, q) \leq q-1$.  
	To establish equality, it suffices to notice that since $x^{\vv{u}}\in \idealm^\D$, we have that $x^{\scalarvector{(q-1)}{\vv{u}}}\in \frob{(\idealm^\D)}{q-1}\setminus \diag(\vectorscalar{q}{\vv{u}})$. 
\end{proof}

\begin{corollary}
	If $\D \leq n$, then the only critical exponent of $\idealm^\D$ in $[0,1]$ is $1$.
\end{corollary}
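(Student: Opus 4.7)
The plan is to combine the two preceding propositions in the obvious way. First I would invoke \Cref{small crits of m^D: P} to reduce the problem to showing that whenever $\vv{u} \in \NN^{n}$ satisfies $\vv{u} > \vv{0}$ and $\norm{\vv{u}} \leq \D + n - 1$, one has $\crit(\idealm^{\D}, \vv{u}) = 1$.

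Next I would exploit the hypothesis $\D \leq n$ in the simplest possible way: the condition $\vv{u} > \vv{0}$ (meaning each coordinate is at least $1$) forces $\norm{\vv{u}} \geq n$, and combining this with $n \geq \D$ gives $\D \leq \norm{\vv{u}}$. Since the upper bound $\norm{\vv{u}} \leq \D + n - 1$ is already built in, the point $\vv{u}$ lies in exactly the range covered by \Cref{degenerate m^D: P}, which immediately yields $\crit(\idealm^{\D}, \vv{u}) = 1$. Thus no element of $[0,1]$ other than $1$ can be a critical exponent of $\idealm^{\D}$.

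To complete the proof I would verify that $1$ actually is a critical exponent, not merely a candidate. The cleanest way is to take $\vv{u} = \one$, noting that $\vv{u} > \vv{0}$ and $\norm{\one} = n$ satisfies $\D \leq n \leq \D + n - 1$, so \Cref{degenerate m^D: P} gives $\crit(\idealm^{\D}, \one) = 1$; since every $\crit(\idealm^{\D}, \idealb)$ is a critical exponent, this exhibits $1$ as such.

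There is no real obstacle here: the work has all been done in \Cref{small crits of m^D: P} and \Cref{degenerate m^D: P}, and the corollary is just the observation that when $\D \leq n$ the pigeonhole inequality $\norm{\vv{u}} \geq n \geq \D$ forces every relevant $\vv{u}$ into the degenerate regime.
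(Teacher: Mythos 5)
Your proof is correct and follows essentially the same approach as the paper's: combine \Cref{small crits of m^D: P} (to restrict the candidate points $\vv{u}$) with \Cref{degenerate m^D: P} (which the hypothesis $\D \leq n$ forces to apply to every such $\vv{u}$). Your third step verifying that $1$ is in fact attained, via $\vv{u} = \one$, is a reasonable completeness check that the paper leaves implicit, since the paper has already noted that every $\crit(\ideala, \idealb)$ is a genuine critical exponent.
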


\begin{proof}  
	If $\vv{u} \in \NN^n$ has positive coordinates, then $\norm{\vv{u}} \geq n$, 
		which is at least $\D$ by assumption.  
	Our claim then follows from \Cref{small crits of m^D: P,degenerate m^D: P}.
\end{proof}

With this in hand, it remains to compute the critical exponent of $\idealm^\D$ with respect to a point in $\NN^n$ with positive coordinates and whose norm is at most $\D$, which we accomplish in \Cref{critical exponents of m^d: T} below.
In order to simplify the upcoming discussion, we fix the following notation.

\begin{setup}
    \label{m^d: N}
    Fix an integer $k$ with $n \leq k \leq \D$ and a point $\vv{u} \in \comp(k, n)$.
\end{setup}

The following is the main result of this section.

\begin{theorem}\label{critical exponents of m^d: T}
    In the context of \Cref{m^d: N},
    if $s = \inf \{ e \geq 1 : \lpr{kp^e}{\D} < n\}$, then
    \[ \crit(\idealm^\D, \vv{u}) =  \frac{k}{\D} - \frac{\lpr{kp^s}{\D}}{\D p^s}, \]
    where we interpret this formula to agree with $k/\D$ if $s= \infty$.
    In particular, $\crit(\idealm^\D, \vv{u})$ depends only on $k$, but not on the point $\vv{u} \in \comp(k,n)$.
\end{theorem}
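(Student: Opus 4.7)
The plan is to compute $\crit(\idealm^\D, \vv{u}) = \lim_{e\to\infty}\mu(\vv{u}, p^e)/p^e$, which by \Cref{relating mu's: P} is the supremum of the non-decreasing sequence $\mu(\vv{u}, p^e)/p^e$. Unwinding the definition of Frobenius powers via the base-$p$ expansion $m = \sum_i m_i p^i$, a monomial $x^\vv{w}$ lies in $\frob{(\idealm^\D)}{m}$ exactly when there exist $\vv{a}_i \in \NN^n$ with $\norm{\vv{a}_i} = \D m_i$ satisfying $\sum_i p^i \vv{a}_i \leq \vv{w}$ coordinatewise.

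For the upper bound I would use $\mu \leq \nu$, and a direct count based on \Cref{pigeonhole: L} yields $\nu(\vv{u}, p^e) = \lfloor (kp^e - n)/\D \rfloor$. When $s < \infty$, the inequalities $1 \leq r_s \leq n-1 \leq \D - 1$ give $\nu(\vv{u}, p^s) = M_s - 1$, where $M_s := (kp^s - r_s)/\D$; applying \Cref{relating mu's: P} propagates this to $\mu(\vv{u}, p^{s+f}) \leq M_s p^f - 1$ for every $f \geq 0$. For the lower bound I would exhibit an explicit monomial $x^\vv{w} \in \frob{(\idealm^\D)}{M_s - 1}$ with $\vv{w} \leq p^s\vv{u} - \one$ and $\norm{\vv{w}} = \D(M_s - 1)$; the vector $\vv{v} := p^s\vv{u} - \vv{w}$ then satisfies $\vv{v} \geq \one$ and $\norm{\vv{v}} = r_s + \D \in [\D, \D + n - 1]$. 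Feeding this data into \Cref{building up: P} yields
\[
\mu(\vv{u}, p^{s+f}) \geq (M_s - 1)p^f + \min\bigl(p^f - 1,\, \mu(\vv{v}, p^f)\bigr),
\]
and \Cref{degenerate m^D: P} identifies $\mu(\vv{v}, p^f) = p^f - 1$, improving the bound to $\mu(\vv{u}, p^{s+f}) \geq M_s p^f - 1$.

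Combining the two bounds gives $\mu(\vv{u}, p^{s+f}) = M_s p^f - 1$, so
\[
\crit(\idealm^\D, \vv{u}) = \lim_{f\to\infty} \frac{M_s p^f - 1}{p^{s+f}} = \frac{M_s}{p^s} = \frac{k}{\D} - \frac{r_s}{\D p^s},
\]
as claimed. The case $s = \infty$ is handled analogously: the condition $r_e \geq n$ for all $e$ allows an explicit construction producing $\mu(\vv{u}, p^e) = (kp^e - r_e)/\D$, and hence $\mu(\vv{u}, p^e)/p^e \to k/\D$. Because all of the bounds and constructions depend only on $k$ (never on the individual coordinates of $\vv{u}$), the final assertion that $\crit(\idealm^\D, \vv{u})$ is independent of $\vv{u} \in \comp(k, n)$ falls out automatically. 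The main obstacle will be the explicit construction of $\vv{w}$ for the lower bound: the $\vv{a}_i$'s must be chosen to satisfy simultaneously the norm constraints $\norm{\vv{a}_i} = \D m_i$ (where $m_i$ is the $i$-th base-$p$ digit of $M_s - 1$) and the coordinatewise bounds $\sum_i p^i a_{i, j} \leq p^s u_j - 1$. I expect this to hinge on a careful distribution of scalar multiples of $\vv{u}$ (or closely related near-uniform vectors) across the base-$p$ levels of $M_s - 1$, aligned with the digit structure of $p^s \vv{u} - \one$.
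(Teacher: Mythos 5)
Your outline is structurally parallel to the paper's proof: compute $\mu(\vv{u}, p^e)/p^e$ via the upper bound $\mu \leq \nu$ (\Cref{regular powers: L}), argue that equality holds at level $q = p^s$, extract a leftover vector $\vv{v}$ with $\norm{\vv{v}} = r_s + \D \in [\D,\,\D+n-1]$, and feed this into \Cref{building up: P}, \Cref{degenerate m^D: P}, and \Cref{relating mu's: P} to pin down all levels $p^{s+f}$. The post-$s$ analysis and the final limit computation are applied correctly, and the independence-of-$\vv{u}$ conclusion does fall out as you say.

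But the step you flag as ``the main obstacle'' is a genuine gap, and it is the heart of the theorem. You propose to exhibit $x^{\vv{w}} \in \frob{(\idealm^\D)}{M_s - 1}$ with $\vv{w} \leq \vectorscalar{p^s}{\vv{u}} - \one$ and $\norm{\vv{w}} = \D(M_s - 1)$ by a one-shot choice of the base-$p$ pieces $\vv{a}_i$, but it is far from clear that the digits of $M_s - 1$ admit such a simultaneous distribution aligned with $\vectorscalar{p^s}{\vv{u}} - \one$, and you do not carry it out. The paper avoids a global construction entirely by proving $\mu(\vv{u}, q) = \nu(\vv{u}, q)$ inductively on $s$ (\Cref{nu=mu: L}, built on \Cref{recursive nu and mu: C} and \Cref{min comparison: P}): at each step one peels off the most significant digit, producing a factor $x^{\vectorscalar{p}{\vv{u}} - \vv{v}}$ with $\vv{v} \in \comp(\lpr{kp}{\D}, n)$, and then recurses on $\vv{v}$. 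The essential observation your proposal is missing is that the hypothesis $\lpr{kp^e}{\D} \geq n$ for all $1 \leq e < s$ is precisely what keeps the intermediate residues large enough that the leftover at every level is a \emph{composition of size $n$} (i.e.\ has all coordinates positive), which is what allows the recursion to continue. Without that inductive scaffolding, your lower bound at level $p^s$ --- and the parallel claim in the $s = \infty$ case, which you likewise defer to ``an explicit construction'' --- remains unproved.
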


\begin{remark}\label{crits in open interval for m^d: R}
    \Cref{degenerate m^D: P,critical exponents of m^d: T} tell us that the only critical exponents of $\idealm^\D$ less than $1$ are those corresponding to compositions $\vv{u} \in \comp(k,n)$ with $n \leq k < \D$.
    Moreover, if $p>\D$, then for every such $k$ there is a unique critical exponent in the interval $\big(\frac{k-1}\D,\frac k\D\big]$.
\end{remark}

\begin{remark}
   The extended integer $s$ in \Cref{critical exponents of m^d: T} depends only on the congruence class of $p$ modulo $\D$.
   Moreover, as $\lpr{kp^e}{\D}$ varies periodically with $e$, the value of~$s$, and hence the value of $\crit(\idealm^\D, \vv{u})$, can always be effectively computed.  
\end{remark}

\begin{remark}
   Under the assumptions of \Cref{critical exponents of m^d: T}, if $p \equiv 1 \bmod \D$, then $\lpr{kp^e}{\D} = k \geq n$ for all $e \geq 1$; thus, $s= \infty$ and $\crit(\idealm^\D, \vv{u}) = k/\D$ for every $\vv{u} \in \comp(k, n)$.
   If $p \equiv -1 \bmod \D$ and $k<\D$, then $\lpr{kp^e}{\D} = k$ or $\D - k$, depending on the parity of $e$.
   Consequently, if $k > \D-n$, then $\lpr{kp}{\D} = \D - k < n$, so that $s = 1$ and 
\[
\crit(\idealm^\D, \vv{u}) = \frac{k}{\D} - \frac{\D-k}{\D p}.
\]
   On the other hand, if $k \leq \D - n$, then since $\D - k \geq n$ and $k \geq n$, we have that $s = \infty$ and  $\crit(\idealm^\D, \vv{u}) = k/\D$.
\end{remark}

Below, we establish a series of results that we use in our proof of \Cref{critical exponents of m^d: T}.
We continue to work in the context established in \Cref{m^d: N}.  

\begin{lemma}\label{regular powers: L}
    Set $\delta = 0$ when $\lpr{kq}{\D} \geq n$ and $\delta = -1$ otherwise.
    Then
      \[ \nu(\vv{u}, q) = \frac{kq- \lpr{kq}{\D}}{\D} + \delta < q.\]  
\end{lemma}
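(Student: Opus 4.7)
The plan is to reduce $\nu(\vv{u},q)$ to a single floor function, and then do the floor arithmetic via a case split on the size of $\lpr{kq}{\D}$.

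First I would unpack the definition: since $(\idealm^\D)^m = \idealm^{\D m}$ is a monomial ideal generated by all monomials of degree $\D m$, the containment $\idealm^{\D m} \subseteq \diag(\vectorscalar{q}{\vv{u}})$ holds if and only if every $x^{\vv{v}}$ with $\norm{\vv{v}} = \D m$ satisfies $v_i \geq q u_i$ for some $i$; equivalently, there is no $\vv{v} \in \NN^n$ with $\norm{\vv{v}} = \D m$ and $\vv{v} \leq \vectorscalar{q}{\vv{u}}-\one$. Such a $\vv{v}$ exists precisely when $\D m \leq \norm{\vectorscalar{q}{\vv{u}}-\one} = qk - n$ (redistribute a composition greedily to match any total at most the maximum). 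Consequently
\[
   \nu(\vv{u},q) \;=\; \max\bigl\{\,m \in \NN : \D m \leq qk - n \,\bigr\} \;=\; \down{\tfrac{qk-n}{\D}}.
\]

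Next I would translate this floor into the stated closed form by writing $qk = \D \cdot \down{qk/\D} + r$ where $r \coloneqq \lpr{kq}{\D}$, interpreted so that $r \in \{1,\dots,\D\}$ (taking $r = \D$ when $\D \mid qk$). If $r \geq n$, then $qk - n = \D \cdot \down{qk/\D} + (r-n)$ with $0 \leq r-n \leq \D-1$, so the floor equals $\down{qk/\D} = (qk - r)/\D$, matching the claim with $\delta = 0$. If $r < n$, then $r - n$ is negative but $r - n + \D \in \{1,\dots,\D-1\}$, so borrowing one $\D$ gives $\down{(qk-n)/\D} = \down{qk/\D} - 1 = (qk-r)/\D - 1$, matching $\delta = -1$. (Note that the case $r = \D$ forces $k = \D$ and hence $r = \D \geq n$, so the two sub-cases indeed align with the definition of $\delta$.)

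Finally, to confirm the strict inequality $\nu(\vv{u},q) < q$, I would rewrite the desired bound as $qk - r + \D\delta < q\D$, i.e.\ $q(k-\D) < r - \D \delta$. The left-hand side is $\leq 0$ because $k \leq \D$, while the right-hand side is $\geq 1$ in both cases ($r \geq 1$ always, and $r + \D \geq 1$ trivially); so the inequality is automatic.

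The only subtle step is the reduction to the floor formula in the first paragraph, where one must be comfortable asserting that every integer total between $0$ and $qk - n$ is attained by some $\vv{v} \leq \vectorscalar{q}{\vv{u}} - \one$; this is routine but is the one place where the condition $\vv{u} > \vv{0}$ (so that $qu_i - 1 \geq 0$) is used. After that, the remainder is bookkeeping with the convention that $\lpr{\cdot}{\D}$ takes values in $\{1,\dots,\D\}$.
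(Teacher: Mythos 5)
Your proof is correct and follows essentially the same route as the paper: both arguments first reduce $\nu(\vv{u},q)$ to the floor $\down{(kq-n)/\D}$ (the paper cites its pigeonhole lemma for the containment $\idealm^{kq-n+1}\subseteq\diag(\vectorscalar{q}{\vv{u}})$ and exhibits the explicit witness $x^{\vectorscalar{q}{\vv{u}}-\one}$, whereas you re-derive the same combinatorics directly), and then convert the floor to the stated closed form. The paper compresses your case split into the single identity $\down{\tfrac{kq-n}{\D}}=\tfrac{kq-\lpr{kq}{\D}}{\D}+\down{\tfrac{\lpr{kq}{\D}-n}{\D}}$, which is exactly your $\delta$; the content is identical.
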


\begin{proof}
    Set $\lambda = (kq-n)/\D$.
    If $N > \lambda$, then $\idealm^{\D N} \subseteq \idealm^{kq - n + 1} \subseteq \diag(\vectorscalar{q}{\vv{u}})$, where the last containment follows from \Cref{pigeonhole: L}.
    However, if $N \leq  \lambda$, then $\idealm^{k q- n} \subseteq \idealm^{\D N}$, but the monomial $x^{\vectorscalar{q}{\vv{u}}-\one} \in \idealm^{k q- n}$ is not in $\diag(\vectorscalar{q}{\vv{u}})$, so $\idealm^{\D N} \not\subseteq \diag(\vectorscalar{q}{\vv{u}})$.

    The preceding argument shows that $\nu(\vv{u}, q) = \down{\lambda}$, which is less than $q$, as $k \leq d$, and the equality $\nu(\vv{u}, q) = \down{\lambda}$ can also be explicitly described as
      \[ \nu(\vv{u}, q) = \down{\frac{kq-n}{\D}} =   \frac{kq-\lpr{kq}{\D}}{\D} +  \down{\frac{\lpr{kq}{\D}-n}{\D} }, \]
      from which our claim follows.
\end{proof}

\begin{corollary}
   \label{recursive nu and mu: C}
   Set $l=\lpr{kp}{\D}$.
   If $l\geq n$, then
   \begin{equation*}
          \nu(\vv{u}, qp) = \nu(\vv{u}, p) \cdot q + \nu(\vv{v}, q)
       \end{equation*}
       for every $\vv{v} \in \comp(l, n)$.
   Moreover, there exists $\vv{v} \in \comp(l,n)$ such that \begin{equation*}
		\mu(\vv{u}, qp) \geq \nu(\vv{u}, p) \cdot q + \mu(\vv{v}, q).
	\end{equation*} 
\end{corollary}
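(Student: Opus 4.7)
The plan is to handle the two claims separately. For the equality governing $\nu$, I would simply compute both sides via \Cref{regular powers: L}. Since $l = \lpr{kp}{\D} \geq n$, that lemma gives $\nu(\vv{u}, p) = (kp-l)/\D$. For any $\vv{v} \in \comp(l, n)$, writing $\lpr{lq}{\D} = \lpr{kpq}{\D}$ (because $l \equiv kp \bmod \D$), \Cref{regular powers: L} yields
\[ \nu(\vv{v}, q) = \frac{lq - \lpr{kpq}{\D}}{\D} + \delta', \qquad \nu(\vv{u}, qp) = \frac{kpq - \lpr{kpq}{\D}}{\D} + \delta', \]
where $\delta' \in \{0, -1\}$ is determined by whether $\lpr{kpq}{\D}\geq n$—crucially, the \emph{same} correction appears on both sides. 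Adding $\nu(\vv{u},p)\cdot q$ and $\nu(\vv{v},q)$ then telescopes to $\nu(\vv{u}, qp)$, proving the first claim.

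For the lower bound on $\mu(\vv{u}, qp)$, I would invoke \Cref{building up: P}. Set $m = \nu(\vv{u}, p) = (kp-l)/\D$. Since $m \leq p-1$ by \Cref{regular powers: L}, the Frobenius power degenerates to an ordinary power: $\frob{(\idealm^\D)}{m} = (\idealm^\D)^m = \idealm^{kp-l}$. The key combinatorial step is to pick a composition $\vv{v} \in \comp(l,n)$ satisfying $\vv{v} \leq p\vv{u}$ coordinatewise; such a $\vv{v}$ exists because $n \leq l \leq pk = \norm{p\vv{u}}$ (the upper bound splits into the trivial case $pk < \D$, where $l = pk$, and the case $pk \geq \D$, where $l \leq \D \leq pk$), so starting from $\one \leq p\vv{u}$ and raising coordinates within their ceilings $pu_i$ yields the desired point. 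The monomial $x^{p\vv{u}-\vv{v}}$ then has nonnegative exponents summing to $kp-l$, hence lies in $\idealm^{kp-l} = \frob{(\idealm^\D)}{m}$. Applying \Cref{building up: P} with the roles of $q$ and $p^e$ played by $p$ and $q$ gives
\[ \mu(\vv{u}, pq) \geq mq + \min\{q-1, \mu(\vv{v}, q)\}. \]

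To finish, I need to remove the minimum by showing $\mu(\vv{v}, q) \leq q-1$. Since $\vv{v} \in \comp(l, n)$ with $l \leq \D$, \Cref{pigeonhole: L} gives $\diag(\vv{v}) \supseteq \idealm^{l-n+1} \supseteq \idealm^\D$, so $\frob{(\idealm^\D)}{q} \subseteq \frob{\diag(\vv{v})}{q}$, forcing $\mu(\vv{v}, q) < q$. This reduces $\min\{q-1, \mu(\vv{v}, q)\}$ to $\mu(\vv{v}, q)$, yielding the desired inequality. The only non-routine step is the combinatorial construction of $\vv{v}$ below $p\vv{u}$; everything else follows from arithmetic and direct invocations of \Cref{regular powers: L}, \Cref{pigeonhole: L}, and \Cref{building up: P}.
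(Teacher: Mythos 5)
Your proof is correct and follows the same overall strategy as the paper's: compute the $\nu$-quantities with \Cref{regular powers: L} and telescope for the first claim, then produce a suitable $\vv{v}\in\comp(l,n)$ and apply \Cref{building up: P} for the second. The first part is essentially identical to the paper's. In the second part you diverge in two sub-steps, both valid. First, to obtain $\vv{v}\in\comp(l,n)$ with $\vv{v}\leq p\vv{u}$ you give a direct combinatorial construction (raise the coordinates of $\one$ toward $p\vv{u}$ until the norm is $l$), whereas the paper notes that $\mu(\vv{u},p)=\nu(\vv{u},p)=(kp-l)/\D$ by \Cref{min comparison: P,regular powers: L}, so by definition of $\mu$ some monomial of $\idealm^{kp-l}$ avoids $\diag(p\vv{u})$, and that monomial is automatically of the form $x^{p\vv{u}-\vv{v}}$ with $\vv{v}\in\comp(l,n)$—existence comes for free. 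Second, to remove the minimum in \Cref{building up: P} you show $\mu(\vv{v},q)<q$ via the containment $\idealm^\D\subseteq\diag(\vv{v})$ deduced from \Cref{pigeonhole: L}, while the paper simply uses $\mu(\vv{v},q)\leq\nu(\vv{v},q)<q$, the last inequality being part of \Cref{regular powers: L}. Your route is a bit more hands-on but costs nothing in correctness; the paper's route avoids the explicit construction and leans more on the definitions of $\mu$ and $\nu$.
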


\begin{proof}
    As $n \leq l \leq \D$, we can use \Cref{regular powers: L} to compute not only $\nu(\vv{u}, p)$ and $\nu(\vv{u}, qp)$, but also $\nu(\vv{v}, q)$ for every $\vv{v} \in \comp(l, n)$.
    Keeping in mind that $\lpr{lq}{\D} = \lpr{kqp}{\D}$, the first equation follows from comparing these formulas.

    By \Cref{min comparison: P,regular powers: L}, $\mu(\vv{u}, p) = \nu(\vv{u}, p)=(kp-l)/\D<p$, so there is a monomial in $\frob{(\idealm^\D)}{\mu(\vv{u}, p)} = \idealm^{\D \mu(\vv{u}, p)} = \idealm^{kp-l}$ not in $\diag(\vectorscalar{p}{\vv{u}})$.
    This monomial is of the form $x^{\vectorscalar{p}{\vv{u}}-\vv{v}}$ for some $\vv{v}\in \comp(l, n)$.
    As $\mu(\vv{v}, q) \leq \nu(\vv{v}, q)<q$, \Cref{building up: P} shows that $\mu(\vv{u}, qp) \geq \mu(\vv{u}, p) \cdot q + \mu(\vv{v}, q) = \nu(\vv{u}, p) \cdot q + \mu(\vv{v},q)$. 
\end{proof}

\begin{lemma}
   \label{nu=mu: L}
   If $\lpr{kp^e}{\D} \geq n$ whenever $p \leq p^e < q$, then $\mu(\vv{u},q) = \nu(\vv{u}, q)$.
\end{lemma}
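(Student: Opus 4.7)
The plan is to induct on the exponent $e$ with $q = p^e$, since the inequality $\mu(\vv{u}, q) \le \nu(\vv{u}, q)$ is immediate from the containment $\frob{(\idealm^\D)}{m} \subseteq (\idealm^\D)^m$, and only the reverse inequality needs proof.

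The base case $e = 1$ has a vacuous hypothesis. Since $\norm{\vv{u}} = k \le \D$ and $n \ge 1$, \Cref{pigeonhole: L} yields $\idealm^\D \subseteq \idealm^{k - n + 1} \subseteq \diag(\vv{u})$, so \Cref{min comparison: P} applies and gives $\mu(\vv{u}, p) = \min\{\nu(\vv{u}, p),\, p - 1\}$. But \Cref{regular powers: L} shows $\nu(\vv{u}, p) < p$, whence the minimum equals $\nu(\vv{u}, p)$, as required.

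For the inductive step, assume $e \ge 2$. Set $l = \lpr{kp}{\D}$. Applying the hypothesis at the exponent $p^1$ forces $l \ge n$, while $l \le \D$ holds by definition of the least positive residue; so every $\vv{v} \in \comp(l, n)$ fits into \Cref{m^d: N}. \Cref{recursive nu and mu: C} then produces a specific such $\vv{v}$ satisfying
\[\mu(\vv{u}, q) \ge \nu(\vv{u}, p) \cdot p^{e - 1} + \mu(\vv{v}, p^{e - 1}),\]
and simultaneously $\nu(\vv{u}, q) = \nu(\vv{u}, p) \cdot p^{e - 1} + \nu(\vv{v}, p^{e - 1})$. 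The crux is to apply the inductive hypothesis to the triple $(\vv{v}, l, p^{e-1})$: its assumption asks that $\lpr{l p^f}{\D} \ge n$ for every $1 \le f \le e - 2$, but $\lpr{l p^f}{\D} = \lpr{k p^{f + 1}}{\D}$, and $f + 1$ ranges inside $\{1, \ldots, e - 1\}$, which is exactly the range covered by the original hypothesis. Induction therefore yields $\mu(\vv{v}, p^{e - 1}) = \nu(\vv{v}, p^{e - 1})$; substituting into the displayed inequality gives $\mu(\vv{u}, q) \ge \nu(\vv{u}, q)$, and the induction closes.

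The only real subtlety is the bookkeeping needed when transferring the hypothesis from $(\vv{u}, k)$ to $(\vv{v}, l)$. The genuine combinatorial content—extracting the recursive relations between $\mu$ and $\nu$ across a factor of $p$—is already packaged into \Cref{recursive nu and mu: C}, so no additional new ideas should be required.
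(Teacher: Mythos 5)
Your proof is correct and follows essentially the same inductive strategy as the paper's: both establish the base case via \Cref{min comparison: P} and \Cref{regular powers: L}, and both close the inductive step by applying the induction hypothesis to the derived point $\vv{v} \in \comp(\lpr{kp}{\D}, n)$ produced by \Cref{recursive nu and mu: C}. Your phrasing is somewhat more explicit about the bookkeeping of which triple $(\vv{v}, l, p^{e-1})$ the hypothesis is applied to, but the underlying argument is identical.
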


\begin{proof}
    Given  $\vv{v} \in \comp(l, n)$, where  $l = \lpr{kp}{d}$, \Cref{min comparison: P,regular powers: L} imply that $\mu(\vv{v}, p) = \nu(\vv{v}, p)$.
    By way of induction, suppose that the desired statement holds for $q$, and that $\lpr{kp^e}{\D} \geq n$ whenever  $p \leq p^e < qp$.
    Setting $e=1$ shows that $l\geq n$, and $\lpr{lp^{e-1}}{\D} = \lpr{kp^e}{\D} \geq n$ whenever $1 \leq p^{e-1} < q$.
    Then for $\vv{v}$ satisfying both conclusions of \Cref{recursive nu and mu: C}, $\mu(\vv{v}, q) = \nu(\vv{v}, q)$ by our induction hypothesis.
    That corollary then implies that $\mu(\vv{u},qp) \geq \nu(\vv{u}, qp)$, hence $\mu(\vv{u},qp) = \nu(\vv{u}, qp)$.
\end{proof}

We are now ready to prove the main result of this section.

\begin{proof}[Proof of \Cref{critical exponents of m^d: T}]
    First suppose that $s < \infty$, and set $q = p^s$.
    Then $\lpr{kp^e}{\D} \geq n$ for all $p \leq p^e < q$, but $\lpr{kq}{\D} < n$, which allows us to invoke \Cref{nu=mu: L,regular powers: L} to deduce that
    \[ \mu(\vv{u}, q) = \nu(\vv{u}, q) = \frac{kq-\lpr{kq}{\D}}{\D} - 1.\]

    This expression for $\mu(\vv{u}, q)$ implies that there exists a monomial of $\frob{(\idealm^\D)}{\mu(\vv{u}, q)}$, and hence a monomial of $(\idealm^\D)^{\mu(\vv{u}, q)} = \idealm^{kq-\lpr{kq}{\D}-\D}$, not lying in $\diag(\vectorscalar{q}{\vv{u}})$.
    Such a monomial is necessarily of the form $x^{\vectorscalar{q}{\vv{u}}-\vv{v}}$ for some $\vv{v} \in \NN^n$ with $\vv{v} > \vv{0}$ and $\norm{\vectorscalar{q}{\vv{u}}-\vv{v}} = kq-\lpr{kq}{\D}-\D$, and therefore,
    \[ \norm{\vv{v}} = ( \norm{\vv{u}}-k) \cdot q + \lpr{kq}{\D} + \D  = \lpr{kq}{\D} + \D. \]
    It follows that $\D < \norm{\vv{v}} \leq \D+n-1$, and so \Cref{degenerate m^D: P} tells us that $\mu(\vv{v}, p^e) = p^e-1$ for all $e \geq 1$.
    Given this, \Cref{building up: P} with $m=\mu(\vv{u}, q)$ then states that
    \[ \mu(\vv{u}, qp^e) \geq  \mu(\vv{u}, q) \cdot p^e + p^e-1, \]
    and \Cref{relating mu's: P} implies that equality must hold, so
    \[
        \mu(\vv{u}, qp^e) = ( \mu(\vv{u}, q)  + 1) \cdot p^e - 1 = \left( \frac{kq-\lpr{kq}{\D}}{\D} \right) \cdot p^e - 1
    \]
    for every $e\geq 1$.
    Dividing by $qp^e$ and letting $e\to \infty$ gives the desired formula for $\crit(\idealm^\D,\vv{u})$.

    Next, instead suppose that $s = \infty$, or equivalently, that $\lpr{kp^e}{\D} \geq n$ for every $e \geq 1$.
    In this case, \Cref{nu=mu: L,regular powers: L} show that
    \[
        \mu(\vv{u}, p^e) = \nu(\vv{u}, p^e) = \frac{kp^e-\lpr{kp^e}{\D}}{\D}
    \]
    for every $e \geq 1$.
    Dividing by $p^e$ and letting $e\to\infty$ shows that $\crit(\idealm^\D, \vv{u})=k/\D$.
\end{proof}

\subsection{Frobenius powers}

We now turn our attention to computing the Frobenius powers $\frob{(\idealm^\D)}{\lambda}$. 
By Skoda's Theorem for Frobenius powers \cite[Corollary~3.17]{hernandez+etal.frobenius_powers}, we can assume that $\lambda$ is a critical exponent of $\idealm^\D$ less than $1$, or equivalently, $\lambda=\crit(\idealm^{\D},\vv{u})$, for some $\vv{u}>\vv{0}$ with $\norm{\vv{u}}<\D$ (see \Cref{crits in open interval for m^d: R}).
In \Cref{Frobenius powers of m^d: T} below, we compute $\frob{(\idealm^\D)}{\lambda}$ at every such $\lambda$.
However, in contrast to results in \Cref{critical exponents of m^d: subsection}, we require the additional assumption that $p \gg 0$.

\begin{theorem}
\label{Frobenius powers of m^d: T}
	Suppose $p>\D$.  
	Fix an integer $k$ with $n \leq k < \D$, and let $\lambda$ denote the common value of $\crit(\idealm^\D, \vv{u})$ for every $\vv{u} \in \comp(k,n)$.  
	Then $\frob{(\idealm^\D)}{\lambda} = \idealm^{k-n+1}$.  
\end{theorem}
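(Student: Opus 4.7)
The plan is to prove both containments, anchoring on \Cref{pigeonhole: L}, which identifies $\idealm^{k-n+1}$ with $\bigcap_{\vv{u}\in\comp(k,n)}\diag(\vv{u})$.

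The forward inclusion $\frob{(\idealm^\D)}{\lambda}\subseteq\idealm^{k-n+1}$ is essentially immediate: by the minimal characterization of critical exponents recorded just before \Cref{min comparison: P}, the equality $\lambda=\crit(\idealm^\D,\vv{u})$ yields $\frob{(\idealm^\D)}{\lambda}\subseteq\diag(\vv{u})$ for each $\vv{u}\in\comp(k,n)$, and intersecting over all such $\vv{u}$ gives the containment.

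The reverse inclusion $\idealm^{k-n+1}\subseteq\frob{(\idealm^\D)}{\lambda}$ is where work is needed. Since $\idealm^{k-n+1}$ is generated by the monomials $x^{\vv{w}}$ with $\norm{\vv{w}}=k-n+1$, it suffices to show that each such monomial lies in $\frob{(\idealm^\D)}{\lambda}$. By \Cref{crits of mon ideals: P}, this is equivalent to the strict inequality $\crit(\idealm^\D,\vv{w}+\one)>\lambda$. Setting $\vv{v}=\vv{w}+\one$, one has $\vv{v}>\vv{0}$ and $\norm{\vv{v}}=k+1$. I then split into two cases. When $k+1=\D$, then $\D\leq\norm{\vv{v}}\leq\D+n-1$ and \Cref{degenerate m^D: P} gives $\crit(\idealm^\D,\vv{v})=1$, which strictly exceeds $\lambda\leq k/\D<1$. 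When $k+1<\D$, then $\vv{v}\in\comp(k+1,n)$, and \Cref{critical exponents of m^d: T} provides a formula for $\lambda_{k+1}\coloneqq\crit(\idealm^\D,\vv{v})$ which depends only on $k+1$.

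The main obstacle---and the sole place where the hypothesis $p>\D$ is essential---is the strict monotonicity $\lambda_k<\lambda_{k+1}$, where $\lambda_j$ denotes the common critical exponent attached to an index $j$ with $n\leq j<\D$. This falls cleanly out of the formula in \Cref{critical exponents of m^d: T}: assuming $s_j<\infty$, the bound $\lpr{jp^{s_j}}{\D}\leq \D-1$ together with $p^{s_j}\geq p$ gives
\[ \lambda_j \;=\; \frac{j}{\D}-\frac{\lpr{jp^{s_j}}{\D}}{\D p^{s_j}} \;>\; \frac{j}{\D}-\frac{1}{p} \;>\; \frac{j-1}{\D}, \]
the last inequality using exactly that $p>\D$; the case $s_j=\infty$ is trivial. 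Applying this with $j=k+1$ yields $\lambda_{k+1}>k/\D\geq\lambda_k$, completing the argument. This is precisely the interval-uniqueness phenomenon already flagged in \Cref{crits in open interval for m^d: R}, and without $p>\D$ the chain of inequalities breaks and consecutive $\lambda_j$'s could coincide.
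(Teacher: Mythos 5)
Your proof is correct and follows essentially the same route as the paper: the forward inclusion via the $\min$-characterization of $\crit$ plus \Cref{pigeonhole: L}, and the reverse inclusion by applying \Cref{crits of mon ideals: P} to each generating monomial and then bounding $\crit(\idealm^\D,\vv{w}+\one)$ from below using the formula in \Cref{critical exponents of m^d: T} and $p>\D$. The only stylistic divergence is your case-split at $k+1=\D$ via \Cref{degenerate m^D: P}, which the paper avoids by noting that \Cref{critical exponents of m^d: T} already applies uniformly to all $\vv{v}$ with $\norm{\vv{v}}=k+1\leq\D$.
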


\begin{proof} 
	By the definition of the critical exponent $\lambda$ and \Cref{pigeonhole: L},  
		\[ \frob{(\idealm^\D)}{\lambda} \subseteq \bigcap \diag(\vv{u}) = \idealm^{k-n+1}, \] 
	where the intersection is taken over all $\vv{u}  \in \comp(k,n)$.   
	Conversely, if $x^{\vv{u}}$ is a monomial with $\norm{\vv{u}} = k-n+1$, then $\vv{u} + \one$ has norm $k+1 \leq \D$,  and \Cref{critical exponents of m^d: T} then tells us that 
		\[ \crit(\idealm^\D, \vv{u} + \one) = \frac{k+1}{\D} - \frac{\lpr{(k+1)p^s}{\D}}{\D p^s} \] 
	for some positive extended integer $s$, where we interpret the second term to be zero if $s=\infty$.  
	However, no matter the value of $s$, the fact that $\lpr{l}{\D} \leq \D$ for every integer $l$ implies that in all cases,  
		\[ \crit(\idealm^\D, \vv{u}+\one) \geq \frac{k+1}{\D} - \frac{1}{p}  > \frac{k}{\D} \geq \lambda, \] 
	where the second-to-last inequality follows from our assumption that $p > \D$, and the last from \Cref{critical exponents of m^d: T}.
	\Cref{crits of mon ideals: P} then shows that $x^\vv{u}\in\frob{(\idealm^\D)}{\lambda}$, and as $x^{\vv{u}}$ was arbitrary, we conclude that $\idealm^{k-n+1}\subseteq\frob{(\idealm^\D)}{\lambda}$.
\end{proof} 

\subsection{Examples}
\label{m^d examples: SS}

\Cref{Frobenius powers of m^d: T} tells us that 
\[ \big\{ \frob{(\idealm^\D)}{t} : 0 \leq t < 1 \big\} = \big\{  \kk[x_1, \ldots, x_n], \idealm, \idealm^2, \ldots, \idealm^{\D-n}\big\}. \]
 Of course, the $t$-values corresponding to any given possibility depend strongly on $p$ modulo $\D$, as we see below.

\begin{example} \label{power of m simple 1: E}
	Suppose $\idealm = \ideal{x, y, z}$. If $p > 7$ and $p \equiv 6 \bmod 7$, then
		\begin{equation*}
			\frob{(\idealm^7)}{t}=
			\begin{cases} 
				\kk[x,y,z] &\text{if } t \in \big[0, \frac{3}{7} \big) \\ 		
				\idealm &\text{if } t \in \big[\frac{3}{7}, \frac{4}{7}\big) \\ 	
				\idealm^2 &\text{if } t \in \big[\frac{4}{7}, \frac{5}{7} - \frac{2}{7p} \big) \\ 		
				\idealm^3 &\text{if } t \in \big[ \frac{5}{7} - \frac{2}{7p} , \frac{6}{7} - \frac{1}{7p}\big) \\ 
				\idealm^4 &\text{if } t \in \big[ \frac{6}{7} - \frac{1}{7p}, 1 \big) \\ 
		\end{cases}
		\end{equation*}
	In this case, we find that the value of the extended integer $s$ appearing in  \Cref{critical exponents of m^d: T} is either infinite or equals $1$.  
	In particular, this suggests that $s$ usually depends on the value of $k$.

\end{example}

\begin{example} \label{power of m simple 2: E}
  As in \Cref{power of m simple 2: E}, consider the ideal $\idealm = \ideal{x, y, z}$.
  If $p > 7$ and $p \equiv 5 \bmod 7$, then
  \begin{equation*}
			\frob{(\idealm^7)}{t}=
			\begin{cases} 
				\kk[x,y,z] &\text{if } t \in \big[0, \frac{3}{7} - \frac{1}{7p} \big) \\ 		
				\idealm &\text{if } t \in \big[\frac{3}{7} - \frac{1}{7p}, \frac{4}{7} - \frac{2}{7p^2} \big) \\ 	
				\idealm^2 &\text{if } t \in \big[\frac{4}{7} - \frac{2}{7p^2} , \frac{5}{7} - \frac{2}{7p^3} \big) \\ 		
				\idealm^3 &\text{if } t \in \big[ \frac{5}{7} - \frac{2}{7p^3} , \frac{6}{7} - \frac{2}{7p}\big) \\ 
				\idealm^4 &\text{if } t \in \big[ \frac{6}{7} - \frac{2}{7p}, 1 \big) \\ 
		\end{cases}
              \end{equation*}
  Here, $s$ is finite for all $k$, and takes on the values $1,2$, and $3$ as $k$ varies.
\end{example}

%%%%%%%%%%%%%%%%%%%%%%%
\section{A balanced diagonal ideal}
\label{diagonalsBalanced: S}

In this section, we determine the critical exponents of the balanced diagonal ideal
  \[
  \ideald = \diag(\scalarvector{\D}{\one}) = \ideal{x_1^{\D}, \ldots, x_n^{\D}}, \]
where $\D$ is a positive integer.
We compute the critical exponents and Frobenius powers of a general diagonal ideal in the next section.

\begin{convention}
   In this section, the expression $p \gg 0$ means both that $p > n\D-n$, and that $p \nmid \D$.  
\end{convention}

\Cref{crits of m-primary ideal: P} tells us that all critical exponents of $\ideald$ in the unit interval have the form $\crit(\ideald, \vv{u})$, where $\vv{u} \in \NN^n$ is a point with $\vv{u} > \vv{0}$, and such that $\ideald$ lies in $\diag(\vv{u})$, or equivalently, such that $\vv{u} \leq \scalarvector{\D}{\one}$.
Furthermore, it is not difficult to show that if some coordinate of $\vv{u}$ equals $\D$, then the corresponding critical exponent must be $1$.  We summarize these observations.

\begin{proposition} \label{critical exponents of diagonal can be taken with respect to diagonal ideals: P}
  Every critical exponent of $\ideald =  \ideal{ x_1^{\D}, \ldots, x_n^{\D}}$ in the unit interval has the form $\crit(\ideald, \vv{u})$, where the point $\vv{u}\in \NN^n$ satisfies $\vv{0} < \vv{u} \leq \scalarvector{\D}{\one}$.
  In addition, if $\vv{u} \not < \scalarvector{\D}{\one}$, then $\crit(\ideald, \vv{u}) = 1$.
  \qed
\end{proposition}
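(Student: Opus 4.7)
The plan is to dispatch each of the two assertions in turn.

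For the first, my starting point is \Cref{crits of m-primary ideal: P}: since $\ideald$ is an $\idealm$-primary monomial ideal, every critical exponent already has the form $\crit(\ideald,\vv{u})$ for some $\vv{u} > \vv{0}$. To pin down the additional bound $\vv{u} \leq \scalarvector{\D}{\one}$, I would use the description $\crit(\ideala, \idealb) = \min\{t>0 : \frob{\ideala}{t} \subseteq \idealb\}$ together with the fact that Frobenius powers are non-increasing in the exponent. If a critical exponent lies in the unit interval, then $\ideald = \frob{\ideald}{1} \subseteq \diag(\vv{u})$; since each generator $x_i^{\D}$ of $\ideald$ is a pure-variable monomial, its membership in $\diag(\vv{u})$ is equivalent to divisibility by $x_i^{u_i}$, forcing $u_i \leq \D$.

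For the second assertion, assume some coordinate, say $u_i$, equals $\D$. The containment $\vv{u} \leq \scalarvector{\D}{\one}$ already yields $\ideald \subseteq \diag(\vv{u})$ and hence $\crit(\ideald,\vv{u}) \leq 1$, so the task is to establish the reverse inequality. I would do this by showing $\mu(\ideald,\vv{u},q) \geq q-1$ for every $q$, which upon dividing by $q$ and letting $q\to\infty$ forces $\crit(\ideald,\vv{u}) = 1$. The natural witness monomial is $x_i^{\D(q-1)} = (x_i^{\D})^{q-1}$. By the multinomial description of integral Frobenius powers recalled at the start of Section~2 (via \cite{hernandez+etal.frobenius_powers}), this monomial lies in $\frob{\ideald}{q-1}$ because the multinomial coefficient $\binom{q-1}{(q-1)\canvec_i} = 1$ is nonzero modulo $p$. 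On the other hand, membership of a monomial in $x_i$ alone in $\frob{\diag(\vv{u})}{q} = \diag(\vectorscalar{q}{\vv{u}})$ would require $x_i^{qu_i} = x_i^{q\D}$ to divide $x_i^{\D(q-1)}$, which manifestly fails.

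I do not expect any genuine obstacle. The only step that requires a moment's thought is locating the right witness monomial for the second assertion; once one notices that a pure power of a single generator $x_i^{\D}$ yields a multinomial coefficient of $1$ (and therefore lies in the Frobenius power $\frob{\ideald}{q-1}$ regardless of $q$), everything else is a routine verification of divisibility.
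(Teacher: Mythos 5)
Your proof is correct and follows the same route the paper sketches: invoke \Cref{crits of m-primary ideal: P} and the equivalence $\crit(\ideald,\vv{u}) \leq 1 \iff \ideald \subseteq \diag(\vv{u}) \iff \vv{u} \leq \scalarvector{\D}{\one}$ for the first assertion, and then exhibit the monomial $(x_i^\D)^{q-1} \in \frob{\ideald}{q-1} \setminus \diag(\vectorscalar{q}{\vv{u}})$ for the second. The paper leaves these details to the reader (the result is stated with a terminal \qed after a one-sentence gloss), and your write-up supplies precisely the verification the authors had in mind.
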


Toward the desired computation, we begin with a useful characterization of $\mu(\ideald, \vv{u}, q)$.
As $\ideald$ is fixed throughout this section, we omit it from the notation, and write $\mu(\vv{u}, q)$ rather than $\mu(\ideald,\vv{u}, q)$.  

\begin{lemma}
\label{mu for diagonal: L}
  If $\vv{u} \in \NN^n$ satisfies $\vv{u} > \vv{0}$, then $\mu(\vv{u}, q)$ equals
\[ \max \left \{ \norm{\vv{s}} : \vv{s} \in \NN^n \text{ with } \binom{\norm{\vv{s}}}{\vv{s}} \not \equiv 0 \bmod p \text{ and } \vv{s} \leq \frac{\vectorscalar{q}{\vv{u}}- \lpr{\vectorscalar{q}{\vv{u}}}{\D}}{\D} \right \}.\]
\end{lemma}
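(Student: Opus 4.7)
The plan is to unpack the definition
$\mu(\vv{u}, q) = \max\{ m \in \NN : \frob{\ideald}{m} \not\subseteq \frob{\diag(\vv{u})}{q}\}$
by combining the characterization of integral Frobenius powers via multinomial coefficients with a direct containment check for monomial ideals. From the preliminaries, $\frob{\ideald}{m}$ is generated by the monomials $x^{\D \vv{s}}$ as $\vv{s}$ ranges over $\NN^n$ with $\norm{\vv{s}} = m$ and $\binom{m}{\vv{s}} \not\equiv 0 \bmod p$, while $\frob{\diag(\vv{u})}{q} = \diag(\vectorscalar{q}{\vv{u}})$. Hence $\frob{\ideald}{m} \not\subseteq \diag(\vectorscalar{q}{\vv{u}})$ precisely when at least one such generator $x^{\D \vv{s}}$ fails to lie in $\diag(\vectorscalar{q}{\vv{u}})$.

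Next, I would rewrite this non-containment in a usable coordinatewise form. A monomial $x^{\vv{a}}$ lies in $\diag(\vectorscalar{q}{\vv{u}})$ if and only if some coordinate satisfies $a_i \geq qu_i$, so $x^{\D \vv{s}} \notin \diag(\vectorscalar{q}{\vv{u}})$ is equivalent to $\D s_i < q u_i$ for every $i$. The key elementary computation is that this is in turn equivalent to $s_i \leq (q u_i - \lpr{q u_i}{\D})/\D$ for every $i$: writing $q u_i = \D \cdot \tfrac{q u_i - \lpr{q u_i}{\D}}{\D} + \lpr{q u_i}{\D}$ with $1 \leq \lpr{q u_i}{\D} \leq \D$, a short case analysis depending on whether $\D$ divides $q u_i$ shows that $(q u_i - \lpr{q u_i}{\D})/\D$ is simply the largest integer strictly less than $q u_i/\D$. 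Combining, the non-containment $\frob{\ideald}{m} \not\subseteq \diag(\vectorscalar{q}{\vv{u}})$ holds if and only if there exists $\vv{s} \in \NN^n$ with $\norm{\vv{s}} = m$, $\binom{m}{\vv{s}} \not\equiv 0 \bmod p$, and $\vv{s} \leq (\vectorscalar{q}{\vv{u}} - \lpr{\vectorscalar{q}{\vv{u}}}{\D})/\D$.

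Finally, taking the maximum over $m$ and substituting $m = \norm{\vv{s}}$ reindexes the maximum as one over admissible $\vv{s}$, yielding the claimed formula for $\mu(\vv{u}, q)$. The underlying set is nonempty (take $\vv{s} = \vv{0}$, since $\binom{0}{\vv{0}} = 1$) and bounded above (by $\norm{(\vectorscalar{q}{\vv{u}} - \lpr{\vectorscalar{q}{\vv{u}}}{\D})/\D}$), so the maximum is well-defined. I do not anticipate a real obstacle here; the only point requiring care is the least-positive-residue convention $\lpr{m}{\D} = \D$ when $\D \mid m$, which must be tracked through the division identity above.
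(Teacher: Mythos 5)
Your argument is correct and is essentially the paper's proof: both use the multinomial-coefficient description of the generators of $\frob{\ideald}{m}$, check non-membership in $\diag(\vectorscalar{q}{\vv{u}})$ coordinatewise, and perform the same least-positive-residue arithmetic (the paper phrases the final step as $\up{\lpr{\vectorscalar{q}{\vv{u}}}{\D}/\D}=\one$, while you argue via the division identity, but these are the same observation). No gaps.
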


\begin{proof}
   Recall that if $m\in \NN$, then $\frob{\ideald}{m}$ is generated by monomials $x^{\scalarvector{\D}{\vv{s}}}$ such that $\norm{\vv{s}}=m$ and $\binom{\norm{\vv{s}}}{\vv{s}}\not\equiv 0 \bmod p$.
   The monomial $x^{\scalarvector{\D}{\vv{s}}}$ does not lie in $\frob{\diag(\vv{u})}{q} = \diag(\vectorscalar{q}{\vv{u}})$ if and only if $\scalarvector{\D}{ \vv{s}} < \vectorscalar{q}{\vv{u}}$, which is equivalent to 
\[ \vv{s} < \frac{\vectorscalar{q}{\vv{u}}}{\D} \iff \vv{s} \leq \up{\frac{\vectorscalar{q}{\vv{u}}}{\D}} - \one = \frac{\vectorscalar{q}{\vv{u}} - \lpr{\vectorscalar{q}{\vv{u}}}{\D}}{\D}  + \up{   \frac{\lpr{\vectorscalar{q}{\vv{u}}}{\D}}{\D}} - \one.
\]
   Finally, by the definition of least positive residue, $\up{ \lpr{\vectorscalar{q}{\vv{u}}}{\D}/\D} = \one$. 
\end{proof}

With this in hand, we now dispense with a degenerate case.

\begin{proposition} 
\label{degenerate diagonal crit: P}
  If $p\gg 0$ and $\vv{u} \in \NN^n$ satisfies $\vv{0} < \vv{u} \leq \scalarvector{\D}{\one}$ and $\norm{\vv{u}} > \D$, then $\mu(\vv{u},p^{e})=p^{e}-1$, for every $e\geq 1$, and consequently $\crit(\ideald, \vv{u}) = 1$. 
\end{proposition}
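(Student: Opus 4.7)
The proof has two halves. The upper bound $\mu(\vv{u},p^e)\leq p^e-1$ is immediate: the hypothesis $\vv{u}\leq\scalarvector{\D}{\one}$ gives $\ideald\subseteq\diag(\vv{u})$, hence $\frob{\ideald}{p^e}\subseteq\frob{\diag(\vv{u})}{p^e}=\diag(\vectorscalar{p^e}{\vv{u}})$. For the reverse inequality $\mu(\vv{u},p^e)\geq p^e-1$, I plan to apply \Cref{mu for diagonal: L} by exhibiting a single vector $\vv{t}\in\NN^n$ of norm $p^e-1$ with $\binom{p^e-1}{\vv{t}}\not\equiv 0\bmod p$ and $\vv{t}\leq (\vectorscalar{p^e}{\vv{u}}-\lpr{\vectorscalar{p^e}{\vv{u}}}{\D})/\D$.

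The construction proceeds in two steps. First, set $s_i=\down{u_i(p-1)/\D}$, write $u_i(p-1)=\D s_i+r_i$ with $0\leq r_i\leq \D-1$, and note that
\[ \D\sum s_i \;=\; (p-1)\norm{\vv{u}}-\sum r_i \;\geq\; (p-1)(\D+1)-n(\D-1) \;=\; \D(p-1)+\big(p-1-n(\D-1)\big), \]
where the inequality uses $\norm{\vv{u}}\geq \D+1$. The hypothesis $p>n\D-n$ makes the final parenthesized term non-negative, so $\sum s_i\geq p-1$. Pick any $\vv{s}'\in\NN^n$ with $\vv{s}'\leq\vv{s}$ coordinatewise and $\norm{\vv{s}'}=p-1$, and define
\[ \vv{t}\;=\;\vv{s}'\cdot\bigl(1+p+p^{2}+\cdots+p^{e-1}\bigr). \]

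Second, I verify the three required properties of $\vv{t}$. Clearly $\norm{\vv{t}}=(p-1)(p^e-1)/(p-1)=p^e-1$. Since each $s_i'\leq p-1$, the base-$p$ expansion of $t_i$ has every digit equal to $s_i'$; summing coordinatewise, the digit sum in every position is $\sum_i s_i'=p-1$, which is exactly the corresponding digit of $p^e-1$. Thus there are no carries when adding $t_1,\ldots,t_n$ in base $p$, and Kummer/Lucas yields $\binom{p^e-1}{\vv{t}}\not\equiv 0\bmod p$. Finally, from $\D s_i'\leq \D s_i\leq u_i(p-1)$ we get
\[ \D t_i \;=\; \D s_i'\cdot\tfrac{p^e-1}{p-1} \;\leq\; u_i(p^e-1) \;<\; p^e u_i, \]
so $\vv{t}<\vectorscalar{p^e}{\vv{u}}/\D$, which is equivalent to $\vv{t}\leq(\vectorscalar{p^e}{\vv{u}}-\lpr{\vectorscalar{p^e}{\vv{u}}}{\D})/\D$. \Cref{mu for diagonal: L} then gives $\mu(\vv{u},p^e)\geq\norm{\vv{t}}=p^e-1$, and combining with the upper bound yields $\mu(\vv{u},p^e)=p^e-1$ for every $e\geq 1$, whence $\crit(\ideald,\vv{u})=\lim_{e\to\infty}(p^e-1)/p^e=1$.

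The main obstacle is arranging a single exponent vector that simultaneously lies in the support of $\frob{\ideald}{p^e-1}$ (a condition controlled by a no-carry/Lucas criterion on base-$p$ digits) and stays strictly under the threshold $\vectorscalar{p^e}{\vv{u}}/\D$. The key trick is to force $\vv{t}$ to have the same base-$p$ digit vector $\vv{s}'$ in every position: this makes the no-carry condition equivalent to the single statement $\norm{\vv{s}'}=p-1$, and the size inequality then reduces (after clearing the geometric factor $(p^e-1)/(p-1)$) to the bound $\D s_i'\leq u_i(p-1)$, which is ensured by the choice $s_i=\down{u_i(p-1)/\D}$. The quantitative hypothesis $p>n\D-n$ enters at exactly one place, namely to guarantee $\sum s_i\geq p-1$ so that such an $\vv{s}'$ exists.
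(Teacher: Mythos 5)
Your proof is correct, and the overall skeleton is the same as the paper's: the upper bound $\mu(\vv{u},p^e)\leq p^e-1$ comes from $\ideald\subseteq\diag(\vv{u})$, and the lower bound comes from exhibiting a witness for \Cref{mu for diagonal: L} with norm $p^e-1$. Where you genuinely diverge is in the construction of that witness. The paper first produces a vector $\vv{t}_0$ of norm $p-1$ satisfying the constraint for $e=1$ with strict inequality in some coordinate $i$, and then ``inflates'' it to $\vv{s}=p^{e-1}\vv{t}_0+(p^{e-1}-1)\canvec_i$, whose base-$p$ digits are $\vv{t}_0$ in position $e-1$ and $(p-1)\canvec_i$ in the lower positions. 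You instead build a repdigit vector $\vv{t}=\vv{s}'\cdot(1+p+\cdots+p^{e-1})$ whose base-$p$ digits are $\vv{s}'$ in \emph{every} position, and arrive at $\vv{s}'$ by a clean floor/remainder estimate $s_i=\down{u_i(p-1)/\D}$ rather than via the $e=1$ case. Both constructions use the no-carry (Kummer/Dickson) criterion and need the same bound $p>n(\D-1)$ at the same point, namely to ensure enough room to achieve total digit sum $p-1$. Your version has two small advantages: it treats all $e$ uniformly in one shot, and it never needs the preliminary reduction to $\vv{u}<\scalarvector{\D}{\one}$ (the paper invokes \Cref{critical exponents of diagonal can be taken with respect to diagonal ideals: P} to dismiss coordinates equal to $\D$, which silently requires the extra observation that $\crit=1$ plus $\mu(\vv{u},q)\leq q-1$ forces $\mu(\vv{u},q)=q-1$; your argument sidesteps this). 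The paper's version has the advantage of making the $e=1$ existence argument more visual and isolating where the largeness of $p$ enters.
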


\begin{proof}
  By assumption, $\ideald \subseteq \diag(\vv{u})$, and so $\mu(\vv{u}, p^e) \leq p^e-1$ for every $e \geq 1$.
  \Cref{critical exponents of diagonal can be taken with respect to diagonal ideals: P} allows us to assume that all coordinates of $\vv{u}$ are less than $\D$, and as
  $p \gg 0$, the same must be true for $\lpr{\vectorscalar{p}{\vv{u}}}{\D}$.
  Given this, our choice of $p \gg 0$
  also guarantees that $\norm{\lpr{\vectorscalar{p}{\vv{u}}}{\D}}$ is at most $p$, and our assumption that $\norm{\vv{u}} > \D$ then implies that $\norm{(\vectorscalar{p}{\vv{u}}-\lpr{\vectorscalar{p}{\vv{u}}}{\D})/ \D} \geq p$.

  This observation guarantees the existence of a point $\vv{t} \in \NN^n$ with norm $p-1$, and such that $\vv{t} \leq (\vectorscalar{p}{\vv{u}}-\lpr{\vectorscalar{p}{\vv{u}}}{\D}) /\D$, where this equality must be strict in at least one coordinate.
  If $i$ is the index of such a coordinate, and $\canvec_i$ is the corresponding standard basis vector, then the reader can verify that
  \[ \vv{s} = \vectorscalar{p^{e-1}}{\vv{t}} +  \scalarvector{(p^{e-1}-1)}{\canvec_i} \]
  satisfies $\norm{\vv{s}} = p^{e}-1, \binom{p^{e}-1}{\vv{s}} \not \equiv 0 \bmod p$ (see \cite{dickson.multinomial}), and $\vv{s} \leq (\vectorscalar{p^e}{\vv{u}}-\lpr{\vectorscalar{p^e}{\vv{u}}}{\D}) /\D$.
  Given this, \Cref{mu for diagonal: L} implies that $\mu(\vv{u}, p^{e}) \geq p^e-1$, and equality holds. 
\end{proof}

Given \Cref{critical exponents of diagonal can be taken with respect to diagonal ideals: P,degenerate diagonal crit: P}, to understand the critical exponents of $\ideald$, it remains to determine $\crit(\ideald, \vv{u})$ in the following context.

\begin{setup}
   \label{balanced diag: setup}
  Fix a point $\vv{u} \in \NN^n$ with $\vv{u} > \vv{0}$ and $\norm{\vv{u}} \leq \D$.
  Equivalently, $\vv{u} \in \comp(k,n)$, where $n \leq k \leq \D$. 
\end{setup}

\begin{remark}
  \label{when norm and lpr commute: R}
  In our upcoming arguments, we are often concerned with the difference between  $\norm{\lpr{\vectorscalar{q}{\vv{u}}}{\D}}$, a sum of least positive residues, and $\lpr{kq}{\D}$, the least positive residue of the norm  $\norm{\vectorscalar{q}{\vv{u}}} = kq$.
  Clearly, these integers are congruent modulo $\D$, and the latter is the least positive residue of the former.
  Furthermore, it is easy to see that 
\[ \norm{\lpr{\vectorscalar{q}{\vv{u}}}{\D}} = \lpr{kq}{\D} \iff \norm{\lpr{\vectorscalar{q}{\vv{u}}}{\D}} \leq \D.\]
\end{remark}

\begin{lemma}
   \label{canonical comp: L}
   If $\norm{\lpr{\vectorscalar{p^e}{\vv{u}}}{\D}} \leq \D$ for every $p \leq p^e < q$, then if
   \[ \vv{s} = \frac{ \vectorscalar{q}{\vv{u}}-\lpr{\vectorscalar{q}{\vv{u}}}{\D}}{\D}\]
   and $p>\D$, then $\binom{\norm{\vv{s}}}{\vv{s}} \not\equiv 0 \bmod p$.
\end{lemma}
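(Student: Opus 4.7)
My plan is to apply the multinomial analogue of Lucas's theorem \cite{dickson.multinomial}: the coefficient $\binom{\norm{\vv{s}}}{\vv{s}}$ is nonzero modulo $p$ exactly when, at each base-$p$ digit position, the corresponding digits of $s_1,\ldots,s_n$ sum to less than $p$. I will establish this criterion by exhibiting an explicit base-$p$ expansion of each coordinate of $\vv{s}$.

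To set up the telescoping, write $q = p^e$ and $r_f = \lpr{\vectorscalar{p^f}{\vv{u}}}{\D}$ for $f \geq 0$, so that $r_0 = \vv{u}$. Division with remainder produces a unique $c_f \in \NN^n$ with $p \cdot r_f = \D \cdot c_f + r_{f+1}$, and iterating this identity coordinate-wise yields
\[
    p^e u_i = \D \sum_{f=0}^{e-1} p^{e-1-f}(c_f)_i + (r_e)_i,
\]
so $s_i = \sum_{f=0}^{e-1} p^{e-1-f}(c_f)_i$. The bounds $(r_f)_i \leq \D$ and $(r_{f+1})_i \geq 1$ force $(c_f)_i < p$, which makes the display above the actual base-$p$ expansion of $s_i$; adding coordinates, the sum of digits of $s_1,\ldots,s_n$ in position $e-1-f$ equals $\norm{c_f}$.

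It remains to show $\norm{c_f} < p$ for each $0 \leq f \leq e-1$. Summing coordinates in $p \cdot r_f = \D c_f + r_{f+1}$ yields $\D \norm{c_f} = p \norm{r_f} - \norm{r_{f+1}}$, and since $\norm{r_{f+1}} \geq n \geq 1$, it is enough to show $\norm{r_f} \leq \D$. For $f = 0$ this is $\norm{\vv{u}} = k \leq \D$, which holds by \Cref{balanced diag: setup}; for $1 \leq f \leq e-1$ it is precisely the lemma's standing hypothesis. Dickson's criterion then delivers $\binom{\norm{\vv{s}}}{\vv{s}} \not\equiv 0 \bmod p$.

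The central step—and what I expect to be the main technical point—is the identification of the telescoping expansion that exhibits the $(c_f)_i$ as the base-$p$ digits of $s_i$. Once this is in hand, the no-carrying requirement translates cleanly into the bound on $\norm{r_f}$ already guaranteed by the hypotheses, and the role of $p > \D$ reduces to ensuring that the residue coordinates $(r_f)_i$ are themselves single base-$p$ digits.
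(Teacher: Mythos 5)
Your proof is correct and takes the same route as the paper: you derive exactly the paper's telescoping expansion of $\vv{s}$ (your $c_f$ are the bracketed terms in the paper's displayed sum), identify the $(c_f)_i$ as base-$p$ digits, and invoke Dickson's criterion once $\norm{c_f}<p$ is secured from $\norm{r_f}\leq\D$. The one small inaccuracy is your closing remark about $p>\D$: the residues $(r_f)_i$ are not base-$p$ digits of $s_i$ (the $(c_f)_i$ are), and your own bounds $(c_f)_i<p$ and $\norm{c_f}<p$ were in fact obtained without ever invoking $p>\D$.
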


\begin{proof}
   If $q=p^r$, then
   \[\vv{s} = \frac{\vectorscalar{q}{\vv{u}}-\lpr{\vectorscalar{q}{\vv{u}}}{\D}}{\D} = \sum_{e=1}^r
    \left( \, \frac{   \lpr{\vectorscalar{p^{e-1}}{\vv{u}}}{\D} \cdot p -
           \lpr{\vectorscalar{p^e}{\vv{u}}}{\D}}{\D} \, \right) \cdot p^{r-e}.\]
 The fact that $\norm{\lpr{\vv{u}}{\D}} \leq \norm{\vv{u}} \leq \D$ and our assumption that $\norm{\lpr{ \vectorscalar{p^{e-1}}{\vv{u}}}{\D}} \leq \D$ for all $1 < e \leq r$ imply that the coordinates of $\vv{s}$ add in base $p$ without carrying, which is equivalent to the condition that $\binom{\norm{\vv{s}}}{\vv{s}} \not\equiv 0 \bmod p$ \cite{dickson.multinomial}.
\end{proof}

We now establish the main result of this section.
Besides characterizing the critical exponents of a balanced diagonal ideal, it also shows that, like in the case of a power of the maximal ideal, these numbers can be effectively computed.

\begin{theorem}  \label{critical exponents of diagonal ideal: T}
   Adopt the context of \Cref{balanced diag: setup}.  If $p \gg 0$ and  \[ s = \inf \{ e \geq 1 :  \norm{\lpr{\vectorscalar{p^e}{\vv{u}}}{\D}} > \D \}\]  then 
\[ \crit(\ideald, \vv{u}) = \frac{k}{\D} -  \frac{\norm{\lpr{\vectorscalar{p^s}{\vv{u}}}{\D}} - \D}{\D p^s}, \]
where we interpret this formula to agree with $k/\D$ if $s = \infty$.
\end{theorem}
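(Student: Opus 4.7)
The plan is to mirror the strategy used in the proof of \Cref{critical exponents of m^d: T}: first pin down $\mu(\vv{u}, p^s)$ exactly, and then propagate this value to $\mu(\vv{u}, p^{s+e})$ via \Cref{building up: P,relating mu's: P}, using an auxiliary point at which the critical exponent is known to equal $1$.

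Suppose first that $s < \infty$, set $q = p^s$, and let $\vv{s}_0 = (\vectorscalar{q}{\vv{u}} - \lpr{\vectorscalar{q}{\vv{u}}}{\D})/\D$. The definition of $s$ ensures $\norm{\lpr{\vectorscalar{p^e}{\vv{u}}}{\D}} \leq \D$ for every $p \leq p^e < q$, so \Cref{canonical comp: L} applies and gives $\binom{\norm{\vv{s}_0}}{\vv{s}_0} \not\equiv 0 \bmod p$. Since every admissible $\vv{s}$ in \Cref{mu for diagonal: L} is coordinatewise bounded by $\vv{s}_0$, one has $\norm{\vv{s}} \leq \norm{\vv{s}_0}$, with equality attained by $\vv{s}_0$ itself. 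This pins down $\mu(\vv{u}, q) = (kq - \norm{\lpr{\vectorscalar{q}{\vv{u}}}{\D}})/\D$.

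Next, set $\vv{v} = \lpr{\vectorscalar{q}{\vv{u}}}{\D}$. This point has positive coordinates bounded by $\D$, and $\norm{\vv{v}} > \D$ by the defining property of $s$, so \Cref{degenerate diagonal crit: P} yields $\mu(\ideald, \vv{v}, p^e) = p^e - 1$ for every $e \geq 1$. Because $\vectorscalar{q}{\vv{u}} - \vv{v} = \scalarvector{\D}{\vv{s}_0}$, the monomial $x^{\vectorscalar{q}{\vv{u}} - \vv{v}} = x^{\scalarvector{\D}{\vv{s}_0}}$ belongs to $\frob{\ideald}{\mu(\vv{u}, q)}$, so \Cref{building up: P} gives $\mu(\vv{u}, q p^e) \geq \mu(\vv{u}, q) \cdot p^e + (p^e - 1)$; the complementary inequality in \Cref{relating mu's: P} forces equality. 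Writing $\mu(\vv{u}, q p^e) = (\mu(\vv{u}, q) + 1) p^e - 1$, dividing by $q p^e$, and letting $e \to \infty$ yields the claimed formula.

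If instead $s = \infty$, then \Cref{canonical comp: L} applies at every power $p^e$, and \Cref{when norm and lpr commute: R} gives $\norm{\lpr{\vectorscalar{p^e}{\vv{u}}}{\D}} = \lpr{k p^e}{\D}$. The same squeeze argument identifies $\mu(\vv{u}, p^e) = (kp^e - \lpr{k p^e}{\D})/\D$, and since $\lpr{k p^e}{\D} \leq \D$, dividing by $p^e$ and letting $e \to \infty$ gives $k/\D$. The most delicate step will be the exact identification of $\mu(\vv{u}, p^s)$: the coordinatewise bound from \Cref{mu for diagonal: L} is only realized by the specific candidate $\vv{s}_0$, so verifying that it satisfies the multinomial non-vanishing condition—by matching the hypotheses of \Cref{canonical comp: L} against the infimum defining $s$ together with the bound $\norm{\vv{u}} \leq \D$ from \Cref{balanced diag: setup}—is the core of the argument.
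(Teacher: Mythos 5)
Your proof is correct and follows essentially the same route as the paper: you pin down $\mu(\vv{u}, p^s)$ by showing the canonical candidate $\vv{s}_0$ achieves the maximum in \Cref{mu for diagonal: L} via \Cref{canonical comp: L}, then propagate using the auxiliary point $\vv{v}=\lpr{\vectorscalar{q}{\vv{u}}}{\D}$ together with \Cref{degenerate diagonal crit: P}, \Cref{building up: P}, and \Cref{relating mu's: P}, exactly as the paper does. The only difference is cosmetic—you spell out the coordinatewise-bound argument for why $\vv{s}_0$ realizes the maximum, which the paper leaves implicit.
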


\begin{proof}
   Set $q=p^s$.
   If $q  < \infty$, then \Cref{canonical comp: L} allows us to conclude that
   \[ \vv{s} = \frac{\vectorscalar{q}{\vv{u}}-\lpr{\vectorscalar{q}{\vv{u}}}{\D}}{\D} \]
   achieves the maximum appearing in the statement of \Cref{mu for diagonal: L}, so that
   \[ \mu(\vv{u}, q) = \norm{\vv{s}} = \frac{kq- \norm{\lpr{\vectorscalar{q}{\vv{u}}}{\D}}}{\D}.\]
   In addition, if we set $\vv{v} = \lpr{\vectorscalar{q}{\vv{u}}}{\D}$, then
   \begin{equation}
\label{left over: e}
x^{\vectorscalar{q}{\vv{u}}-\vv{v}} = x^{\scalarvector{\D}{\vv{s}}} \in \frob{\ideald}{\mu(\vv{u}, q)}.
\end{equation}
   Since $\norm{\vv{v}} > \D$ by the definition of $q$, \Cref{degenerate diagonal crit: P} shows that $\mu(\vv{v}, p^e) = p^e-1$ for every $e \geq 1$.
   This observation, along with \eqref{left over: e}, enables us to apply \Cref{building up: P} to see that $\mu(\vv{u}, qp^e)\geq \mu(\vv{u}, q) \cdot p^e + p^e-1$, and \Cref{relating mu's: P} then implies that equality must hold.
   In other words, if $e \geq 1$, then
\[ \mu(\vv{u}, qp^e) =   ( \mu(\vv{u}, q) + 1 )\cdot  p^e  -1 = \left( \frac{ kq - \norm{\lpr{\vectorscalar{q}{\vv{u}}}{\D}}}{\D} + 1 \right) \cdot p^e - 1. \] 

   Finally, if $q = \infty$, then \Cref{when norm and lpr commute: R} tells us that $\norm{\lpr{\vectorscalar{p^e}{\vv{u}}}{\D}} = \lpr{kp^e}{\D}$ for every $e \geq 1$, and a straightforward application of \Cref{mu for diagonal: L,canonical comp: L} then implies that
   \[ \mu(\vv{u}, p^e) = \frac{kp^e-\lpr{kp^e}{\D}}{\D} \]
   for all $e \geq 1$.
   In either case, the formula for $\crit(\ideald,\vv{u})$ follows.
\end{proof}

\begin{remark} \label{comparing critical exponents of balanced diagonal: remark}
   Under the hypotheses of \Cref{critical exponents of diagonal ideal: T}, if $s<\infty$, then we have that $\norm{\lpr{\vectorscalar{p^s}{\vv{u}}}{\D}} -\D \leq n(\D-1) < p \leq p^s$.
   The theorem then shows that
   \[\frac{k-1}\D < \crit(\ideald, \vv{u}) \leq \frac k \D.\]
   This separates the critical exponents $\crit(\ideald, \vv{u})$ by norm: if $\norm{\vv{u}} < \norm{\vv{v}}$, then  $\crit(\ideald, \vv{u}) < \crit(\ideald, \vv{v})$.
   If $\norm{\vv{u}} = \norm{\vv{v}}$, on the other hand,  the relationship between $\crit(\ideald, \vv{u})$ and $\crit(\ideald, \vv{v})$ depends only on the class of $p$ modulo $\D$.
\end{remark}

\begin{remark}  \label{balanced diagonal special congruences: remark}
\Cref{critical exponents of diagonal ideal: T} implies that if $n \geq 2$, then 
\[
\crit(\ideald,\vv{u}) = 
	\begin{cases}
		\frac{k}{\D} &\text{if } p \gg 0 \text{ and }  p \equiv 1 \bmod \D \\
		\frac{k}{\D} - \frac{n\D-k-\D}{\D p}& \text{if } p \gg 0 \text{ and } p \equiv -1 \bmod \D
	\end{cases}
\]
In particular, if $p\equiv \pm 1\bmod \D$, then $\crit(\ideald, \vv{u})$ depends only on $k = \norm{\vv{u}}$.
Indeed, if $p \equiv 1 \bmod \D$, then for all $e \geq 1$, $\lpr{\vectorscalar{p^e}{\vv{u}}}{\D} = \lpr{\vv{u}}{\D} = \vv{u}$ has norm $k\leq \D$, whence $s=\infty$.
On the other hand, if $p \equiv -1 \bmod \D$, then
$\norm{\lpr{\vectorscalar{p^e}{\vv{u}}}{\D}} = \norm{\lpr{-\vv{u}}{\D}} = \norm{\scalarvector{\D}{\one}-\vv{u}}=n\D-k$ if $e$ is odd, and
$\norm{\lpr{\vectorscalar{p^e}{\vv{u}}}{\D}} = \norm{ \vv{u} } = k \leq \D$ if $e$ is even.
In the extreme case that $n=2$ and $k=\D$, we have $n\D-k = \D$, so $s=\infty$ and $\crit(\ideald, \vv{u}) = 1$, agreeing with our formula.
Otherwise, $\norm{\lpr{\vectorscalar{p}{\vv{u}}}{\D}}> \D$, so that $s=1$, and the formula follows.
\end{remark}

%%%%%%%%%%%%%%%%%%%%%%%
\section{An arbitrary diagonal ideal}  

We now turn to the task of determining the critical exponents and Frobenius powers of the general diagonal ideal 
  \[
  \ideald = \diag(\vv{\D}) = \ideal{x_1^{d_1}, \ldots, x_n^{d_n}},
  \]
where $\vv{\D} = (d_1, \ldots, d_n)$ is some fixed point in $\NN^n$ with $\vv{\D} > \vv{0}$.  

\subsection{Preliminaries}  

Let $d$ be the least common multiple of the entries of~$\vv{\D}$.
Consider the $\NN$-grading on the ambient polynomial ring given by \[\deg(x_i) =  \frac{\D}{d_i}\] for every $1 \leq i \leq n$.  
We extend the degree function to $\NN^n$ via the rule 
\[ \deg(\vv{u}) = \deg(x^{\vv{u}}).\] 
Observe that under this grading, $\ideald$ is generated by monomials of degree $\D$.

\begin{definition}
   Given  $\vv{u} \in \NN^n$,  $\bar{\vv{u}}$ is the point in $\NN^n$ given by
   \[ \bar{\vv{u}} = \bigg( \frac{\D u_1}{d_1}, \ldots, \frac{\D u_n}{d_n} \bigg). \]  
\end{definition}

It follows immediately from this definition that $\deg(\vv{u}) = \norm{\bar{\vv{u}}}$. 

%%%%%%%%%%%%%%%%%%%%%%%
\subsection{Critical exponents.}

\begin{lemma}
\label{balancing crits: L}
Let $\bar{\ideald} = \diag(\bar{\vv{\D}}) = \ideal{ x_1^\D, \ldots, x_n^\D}$.
If $\vv{u} \in \NN^n$ is a point with positive coordinates, then $\crit(\ideald, \vv{u}) = \crit(\bar{\ideald}, \bar{\vv{u}})$.
\end{lemma}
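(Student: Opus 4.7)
The plan is to establish the stronger identity $\mu(\ideald, \vv{u}, q) = \mu(\bar{\ideald}, \bar{\vv{u}}, q)$ for every $q$, from which the claimed equality of critical exponents follows immediately by dividing by $q$ and letting $q\to\infty$. The strategy is to identify both integers as the maximum of $\norm{\vv{s}}$ over a common set of multi-indices $\vv{s}\in\NN^n$.

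To do so, I would unpack the monomial description of Frobenius powers recalled in Section~2. Since $\ideald$ is generated by $x_1^{d_1},\ldots,x_n^{d_n}$, the ideal $\frob{\ideald}{m}$ is generated by the monomials $x^{(d_1 s_1,\ldots, d_n s_n)}$ with $\norm{\vv{s}}=m$ and $\binom{m}{\vv{s}}\not\equiv 0\bmod p$, while $\frob{\bar{\ideald}}{m}$ is generated by $x^{\scalarvector{\D}{\vv{s}}}$ over the same indexing set. Testing containment in the appropriate diagonal ideal then gives matching inequalities: $x^{(d_1 s_1,\ldots, d_n s_n)}\notin\diag(\vectorscalar{q}{\vv{u}})$ precisely when $d_i s_i < q u_i$ for every $i$, and $x^{\scalarvector{\D}{\vv{s}}}\notin\diag(\vectorscalar{q}{\bar{\vv{u}}})$ precisely when $\D s_i < q\cdot \D u_i/d_i$ for every $i$. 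Both conditions simplify to the same coordinatewise inequality $s_i < q u_i/d_i$.

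Consequently, both $\mu(\ideald,\vv{u},q)$ and $\mu(\bar{\ideald},\bar{\vv{u}},q)$ equal the maximum of $\norm{\vv{s}}$ over the set
\[ \big\{\vv{s}\in\NN^n : \tbinom{\norm{\vv{s}}}{\vv{s}}\not\equiv 0\bmod p \text{ and } s_i < q u_i/d_i \text{ for every } i\big\}, \]
from which the lemma follows. I do not expect a genuine obstacle here: the argument is a direct combinatorial translation, using only that $\bar{\vv{u}}\in\NN^n$, which holds because each $d_i$ divides $\D$. Conceptually, this lemma is the bridge that reduces all subsequent calculations about a general diagonal ideal to the balanced case handled in Section~4.
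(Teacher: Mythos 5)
Your proposal is correct and follows essentially the same route as the paper: establish $\mu(\ideald,\vv{u},q)=\mu(\bar{\ideald},\bar{\vv{u}},q)$ by matching the monomial generators $x^{(d_1s_1,\ldots,d_ns_n)}$ of $\frob{\ideald}{m}$ with the generators $x^{\scalarvector{\D}{\vv{s}}}$ of $\frob{\bar{\ideald}}{m}$ and observing that each fails to lie in the respective diagonal ideal under the same condition $s_i < qu_i/d_i$. The paper phrases this as a bijection of minimal monomial generators and leaves the inequality check to the reader; you simply spell out that check explicitly.
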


\begin{proof}
   If $m$ is a positive integer, then there is bijection between the minimal monomial generators of $\frob{\ideald}{m}$ and $\frob{\bar{\ideald}}{m}$ given by 
   \[ x_1^{d_1s_1} \cdots x_n^{d_n s_n}  \in \frob{\ideald}{m} \longleftrightarrow  x_1^{\D s_1} \cdots x_n^{\D s_n} \in \frob{\bar{\ideald}}{m} \]
   As the reader can immediately verify, the definition of $\bar{\vv{u}}$ is such that a minimal monomial generator of $\frob{\ideald}{m}$ lies in $\diag(\vectorscalar{q}{\vv{u}})$ if and only if the corresponding generator of  $\frob{\bar{\ideald}}{m}$ lies in $\diag(\vectorscalar{q}{\bar{\vv{u}}})$.
   In particular, $\mu(\ideald, \vv{u}, q)$ equals $\mu(\bar{\ideald}, \bar{\vv{u}}, q)$ for every $q$, and our claim follows.
 \end{proof}

The following is a direct analog of \Cref{critical exponents of diagonal can be taken with respect to diagonal ideals: P}.

\begin{proposition}
   \label{critical exponents of unbalanced diagonal can be taken with respect to diagonal ideals: P}
   Every critical exponent of $\ideald =  \ideal{x_1^{d_1}, \ldots, x_n^{d_n}}$ in the unit interval has the form $\crit(\ideald, \vv{u})$, where the point $\vv{u}\in \NN^n$ satisfies $\vv{0} < \vv{u} \leq \vv{\D}$.
   In addition, if $\vv{u} \not < \vv{\D}$, then $\crit(\ideald, \vv{u}) = 1$.
   \qed
\end{proposition}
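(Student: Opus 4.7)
The plan is to mirror the proof of \Cref{critical exponents of diagonal can be taken with respect to diagonal ideals: P}, which handled the balanced case, and then invoke \Cref{balancing crits: L} to transfer its conclusion to the general setting.

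For the first assertion, I would start by noting that $\ideald = \ideal{x_1^{d_1}, \ldots, x_n^{d_n}}$ is an $\idealm$-primary monomial ideal, so \Cref{crits of m-primary ideal: P} applies and already presents every critical exponent of $\ideald$ as $\crit(\ideald, \vv{u})$ for some $\vv{u} \in \NN^n$ with $\vv{u} > \vv{0}$. The only new ingredient is the bound $\vv{u} \leq \vv{\D}$, which should follow from the assumption that the exponent lies in $(0,1]$. Using the characterization $\crit(\ideald, \diag(\vv{u})) = \min\{t > 0 : \frob{\ideald}{t} \subseteq \diag(\vv{u})\}$ together with the right-continuity of Frobenius powers recalled in Section 2, the condition $\crit(\ideald, \vv{u}) \leq 1$ is equivalent to $\ideald \subseteq \diag(\vv{u})$. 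Comparing the generators $x_i^{d_i}$ of $\ideald$ with those of $\diag(\vv{u})$, this containment holds precisely when $u_i \leq d_i$ for every $i$, i.e., $\vv{u} \leq \vv{\D}$.

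For the second (degenerate) assertion, I would reduce to the balanced case using \Cref{balancing crits: L}. Since $\bar{\vv{\D}} = (\D d_1/d_1, \ldots, \D d_n/d_n) = \scalarvector{\D}{\one}$, the associated ideal $\bar{\ideald} = \diag(\bar{\vv{\D}})$ is precisely the balanced diagonal ideal studied in \Cref{diagonalsBalanced: S}. If some coordinate $u_i$ equals $d_i$, then $\bar{u}_i = \D u_i/d_i = \D$, so $\bar{\vv{u}} \not< \scalarvector{\D}{\one}$. Applying the balanced analog \Cref{critical exponents of diagonal can be taken with respect to diagonal ideals: P} to $\bar{\ideald}$ and $\bar{\vv{u}}$ yields $\crit(\bar{\ideald}, \bar{\vv{u}}) = 1$, and \Cref{balancing crits: L} then transfers this identity to $\crit(\ideald, \vv{u}) = 1$.

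Since every step is a routine translation of the balanced case via the balancing lemma, I do not foresee any genuine obstacle; this proposition is essentially a direct corollary of its balanced counterpart, and the proof should be short enough that the authors may simply omit it (as suggested by the \texttt{\textbackslash qed} after the statement).
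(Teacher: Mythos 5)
Your proof is correct and follows essentially the intended route: the first assertion is exactly the argument the paper sketches before the balanced version (\Cref{crits of m-primary ideal: P} plus the equivalence $\crit(\ideald,\vv{u}) \leq 1 \iff \ideald \subseteq \diag(\vv{u}) \iff \vv{u}\leq\vv{\D}$), and the second assertion follows by transporting the balanced degenerate case via \Cref{balancing crits: L}, which is precisely why that lemma is placed immediately before this proposition.
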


\begin{convention} We continue to use $d$ to denote the least common multiple of the coordinates of $\vv{d}$.  
As in \Cref{diagonalsBalanced: S}, for the remainder of this section, the expression $p \gg 0$ means that $p > n\D-n$ and $p \nmid \D$.  
\end{convention}

\begin{theorem}  
\label{crits of general diagonal ideal: T}
Suppose $p \gg 0$, and consider a point $\vv{u} \in \NN^n$ with $\vv{0} < \vv{u} \leq \vv{\D}$.   
\begin{enumerate}  
\item If $\deg(\vv{u}) > \D$, then $\crit(\ideald, \vv{u}) = 1$.
\item Otherwise, if $s = \inf \{ e \geq 1 : \norm{\lpr{\vectorscalar{p^e}{\bar{\vv{u}}}}{\D}} > \D \}$, then
 \[ \crit(\ideald, \vv{u}) = \frac{ \deg(\vv{u})}{\D} - \frac{ \norm{\lpr{\vectorscalar{p^s}{\bar{\vv{u}}}}{\D}} - \D}{\D p^s}, \]
which we interpret to agree with $\deg(\vv{u})/\D$ if $s = \infty$.
\end{enumerate}
\end{theorem}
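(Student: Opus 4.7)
The plan is to reduce this statement to its balanced analogs, namely \Cref{degenerate diagonal crit: P} and \Cref{critical exponents of diagonal ideal: T}, by means of \Cref{balancing crits: L}. First I would invoke \Cref{balancing crits: L} to replace $\crit(\ideald, \vv{u})$ with $\crit(\bar{\ideald}, \bar{\vv{u}})$, where $\bar{\ideald} = \diag(\scalarvector{\D}{\one})$ is the balanced diagonal ideal of exponent $\D$. This reduction is essentially the whole strategy: it trades the unbalanced setup for one to which \Cref{diagonalsBalanced: S} directly applies, at the cost of replacing $\vv{u}$ with $\bar{\vv{u}}$ and $\norm{\vv{u}}$ with $\deg(\vv{u}) = \norm{\bar{\vv{u}}}$.

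Next I would verify that $\bar{\vv{u}}$ satisfies the hypotheses of the relevant balanced statements. Since $\vv{u}>\vv{0}$, each coordinate $\D u_i/d_i$ of $\bar{\vv{u}}$ is positive, so $\bar{\vv{u}}>\vv{0}$. The assumption $\vv{u}\leq\vv{\D}$ translates coordinatewise to $\D u_i/d_i\leq \D$, i.e.\ $\bar{\vv{u}}\leq \scalarvector{\D}{\one}$. Thus $\bar{\vv{u}}$ falls within the scope of \Cref{critical exponents of diagonal can be taken with respect to diagonal ideals: P}.

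For part (1), the hypothesis $\deg(\vv{u})>\D$ becomes $\norm{\bar{\vv{u}}}>\D$ together with $\vv{0}<\bar{\vv{u}}\leq\scalarvector{\D}{\one}$, so \Cref{degenerate diagonal crit: P} applies under the standing hypothesis $p\gg 0$ and yields $\crit(\bar{\ideald},\bar{\vv{u}})=1$; combining with \Cref{balancing crits: L} gives $\crit(\ideald,\vv{u})=1$. For part (2), the hypothesis $\deg(\vv{u})\leq \D$ becomes $\norm{\bar{\vv{u}}}\leq\D$, so $\bar{\vv{u}}$ places us in the setting of \Cref{balanced diag: setup} with $k=\norm{\bar{\vv{u}}}=\deg(\vv{u})$ (and $k\geq n$ automatically since $\bar{\vv{u}}$ has $n$ positive integer coordinates). \Cref{critical exponents of diagonal ideal: T} then supplies
\[ \crit(\bar{\ideald},\bar{\vv{u}}) = \frac{\deg(\vv{u})}{\D} - \frac{\norm{\lpr{\vectorscalar{p^s}{\bar{\vv{u}}}}{\D}}-\D}{\D p^s}, \]
with $s$ defined exactly as in the statement, and again applying \Cref{balancing crits: L} finishes the proof.

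There is no real obstacle here beyond bookkeeping; the substantive work has already been done in \Cref{diagonalsBalanced: S}. The only thing to be careful about is confirming that the formula in the balanced case, expressed in terms of the norm of $\bar{\vv{u}}$ and its residues, coincides with the formula as stated here in terms of $\deg(\vv{u})$ and $\bar{\vv{u}}$. Since $\deg(\vv{u})=\norm{\bar{\vv{u}}}$ by construction, this identification is immediate, and the two cases of the theorem correspond precisely to the two cases ($s<\infty$ or $s=\infty$) in \Cref{critical exponents of diagonal ideal: T} combined with the degenerate regime handled by \Cref{degenerate diagonal crit: P}.
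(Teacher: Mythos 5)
Your proof is correct and follows exactly the same route as the paper: reduce to the balanced case via \Cref{balancing crits: L}, note that $\deg(\vv{u}) = \norm{\bar{\vv{u}}}$, and then apply \Cref{degenerate diagonal crit: P} for part (1) and \Cref{critical exponents of diagonal ideal: T} for part (2). The paper's proof is just a terser version of the same bookkeeping you spell out.
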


\begin{proof}  Given that $\norm{\bar{\vv{u}}} = \deg(\vv{u})$, the first assertion follows from \Cref{balancing crits: L} and \Cref{degenerate diagonal crit: P}, and the second from \Cref{balancing crits: L} and \Cref{critical exponents of diagonal ideal: T}.
\end{proof}

\subsection{Frobenius powers}

Here, we determine the Frobenius powers $\frob{\ideald}{t}$.
Once again, by Skoda's Theorem for Frobenius powers, it suffices to assume that $t$ is a critical exponent of $\ideald$ in the open unit interval.  

\begin{setup}
   \label{unbalanced diag: setup}
Suppose that $p \gg 0$.  
\begin{enumerate}
\item $\vv{u} \in \NN^n$ is a point satisfying $\vv{u} > \vv{0}$ and $\crit(\ideald, \vv{u}) < 1$.
\item For each $m\in \NN$, we set $R_{>m} = \ideal{x^{\vv{v}} : \deg(\vv{v}) > m}$.
In particular, 
\[
R_{>\deg(\vv{u}-\one)} = \ideal{x^{\vv{v}} : \deg(\vv{v} + \one) > \deg(\vv{u})}.
\]
\item $\mathcal{A}$ is the set consisting of all points $\vv{v} \in \NN^n$ satisfying $\crit(\ideald, \vv{v}+\one) > \crit(\ideald, \vv{u})$ and $\deg(\vv{v} + \one) = \deg(\vv{u})$, and $\ideala$ is the ideal generated by all monomials $x^{\vv{v}}$ with $\vv{v} \in \mathcal{A}$.
\end{enumerate}
\end{setup}

\begin{remark}  
Once $\vv{u}$ is specified, the generators of $R_{>\deg(\vv{u}-\one)}$ depend on $\deg(\vv{u})$, but are independent of $p$, so $R_{>\deg(\vv{u}-\one)}$ can be effectively computed.
On the other hand, $\mathcal{A}$ depends both on $\vv{u}$ and on $p$ modulo $\D$.  
Nevertheless, the condition that $\deg(\vv{v} + \one) = \deg(\vv{u})$ is satisfied by only finitely many points $\vv{v} \in \NN^n$, and for such point $\vv{v}$,  the relationship between $\crit(\ideald, \vv{v}+\one)$ and $\crit(\ideald, \vv{u})$ depends only on $p$ modulo $\D$ (see \Cref{comparing critical exponents of balanced diagonal: remark}).
In particular, $\mathcal{A}$ (and consequently $\ideala$) can be effectively computed once $\vv{u}$ is specified, and takes on only finitely many values as $p$ varies. 
\end{remark}

\begin{theorem} 
\label{Frobenius powers of general diagonal ideal: T}
 In the context of \Cref{unbalanced diag: setup}, if $p \gg 0$ and $\lambda = \crit(\ideald, \vv{u})$, then \[\frob{\ideald}{\lambda} = R_{>\deg(\vv{u}-\one)} + \ideala.\]
\end{theorem}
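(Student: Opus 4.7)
The plan is to test membership monomial by monomial via Proposition~\ref{crits of mon ideals: P}. Since $\frob{\ideald}{\lambda}$, $R_{>\deg(\vv{u}-\one)}$, and $\ideala$ are all monomial ideals, it suffices to prove, for each $\vv{v}\in\NN^n$, that $\crit(\ideald,\vv{v}+\one)>\lambda$ if and only if $x^{\vv{v}}\in R_{>\deg(\vv{u}-\one)}+\ideala$. The definition of $\mathcal{A}$ already matches one of these conditions on the nose when $\deg(\vv{v}+\one)=\deg(\vv{u})$, so the real content is to handle the remaining cases in terms of $\deg(\vv{v}+\one)$ alone.

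The key ingredient is a \emph{strict} degree-stratification of critical exponents. Combining Lemma~\ref{balancing crits: L} with Remark~\ref{comparing critical exponents of balanced diagonal: remark}, I would establish that for every $\vv{w}\in\NN^n$ with $\vv{0}<\vv{w}\leq\vv{\D}$ and $\deg(\vv{w})\leq\D$,
\[
\frac{\deg(\vv{w})-1}{\D}<\crit(\ideald,\vv{w})\leq\frac{\deg(\vv{w})}{\D},
\]
while $\crit(\ideald,\vv{w})=1$ otherwise, by Proposition~\ref{critical exponents of unbalanced diagonal can be taken with respect to diagonal ideals: P} and Theorem~\ref{crits of general diagonal ideal: T}(1). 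Since degrees are integers, this forces strict monotonicity: $\deg(\vv{w}_1)<\deg(\vv{w}_2)\leq\D$ implies $\crit(\ideald,\vv{w}_1)<\crit(\ideald,\vv{w}_2)$. The hypothesis $\lambda<1$ also forces $\deg(\vv{u})\leq\D$, so $\lambda\in\bigl((\deg(\vv{u})-1)/\D,\,\deg(\vv{u})/\D\bigr]$.

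With this stratification in hand I would split into four cases on $\vv{v}$. If $\vv{v}+\one\not\leq\vv{\D}$, some $v_i\geq d_i$ forces $\deg(\vv{v}+\one)>\D\geq\deg(\vv{u})$, so $\crit(\ideald,\vv{v}+\one)=1>\lambda$ and $x^{\vv{v}}\in R_{>\deg(\vv{u}-\one)}$. If $\vv{v}+\one\leq\vv{\D}$ and $\deg(\vv{v}+\one)>\deg(\vv{u})$, either $\deg(\vv{v}+\one)>\D$ (so $\crit=1>\lambda$) or the stratification gives $\crit(\ideald,\vv{v}+\one)>(\deg(\vv{v}+\one)-1)/\D\geq\deg(\vv{u})/\D\geq\lambda$; either way $x^{\vv{v}}\in R_{>\deg(\vv{u}-\one)}$. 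If $\vv{v}+\one\leq\vv{\D}$ and $\deg(\vv{v}+\one)=\deg(\vv{u})$, then $\crit(\ideald,\vv{v}+\one)>\lambda$ is exactly the condition $\vv{v}\in\mathcal{A}$, i.e.\ $x^{\vv{v}}\in\ideala$, while $\deg(\vv{v})=\deg(\vv{u}-\one)$ rules out $R_{>\deg(\vv{u}-\one)}$. Finally, if $\vv{v}+\one\leq\vv{\D}$ and $\deg(\vv{v}+\one)<\deg(\vv{u})$, strict monotonicity gives $\crit(\ideald,\vv{v}+\one)<\lambda$ and $\deg(\vv{v})<\deg(\vv{u}-\one)$, so $x^{\vv{v}}$ lies in neither piece.

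The main obstacle is justifying the strict lower bound $\crit(\ideald,\vv{w})>(\deg(\vv{w})-1)/\D$, which is precisely where the hypothesis $p\gg 0$ is used: the correction term $(\norm{\lpr{\vectorscalar{p^s}{\bar{\vv{w}}}}{\D}}-\D)/(\D p^s)$ appearing in Theorem~\ref{crits of general diagonal ideal: T}(2) must be shown to be less than $1/\D$, and this follows from the estimate $\norm{\lpr{\vectorscalar{p^s}{\bar{\vv{w}}}}{\D}}-\D\leq n\D-n<p\leq p^s$ noted in Remark~\ref{comparing critical exponents of balanced diagonal: remark}. Once this bound is in place, the case analysis above is routine bookkeeping.
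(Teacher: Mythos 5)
Your proof is correct and follows essentially the same route as the paper's: reduce membership of a monomial $x^{\vv{v}}$ to the comparison $\crit(\ideald,\vv{v}+\one)>\lambda$ via Proposition~\ref{crits of mon ideals: P}, then invoke the degree stratification of critical exponents from Theorem~\ref{crits of general diagonal ideal: T} and Remark~\ref{comparing critical exponents of balanced diagonal: remark} (which the paper cites tersely and you spell out). One small imprecision: in your first case, where $\vv{v}+\one\not\leq\vv{\D}$, the ideal $\ideald$ is not contained in $\diag(\vv{v}+\one)$, so $\crit(\ideald,\vv{v}+\one)>1$ rather than $=1$; the conclusion $>\lambda$ still holds, so the argument is unaffected.
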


\begin{proof}
   \Cref{crits of mon ideals: P} tells us that $x^\vv{v}\in \frob{\ideald}{\lambda}$ if and only if $\crit(\ideald,\vv{v}+\one)>\lambda=\crit(\ideald,\vv{u})$.
   By \Cref{crits of general diagonal ideal: T} (\cf\ \Cref{comparing critical exponents of balanced diagonal: remark}), the latter condition holds if and only if $\deg(\vv{v}+\one)> \deg(\vv{u})$, meaning that $x^\vv{v}\in R_{>\deg(\vv{u}-\one)}$, or $\deg(\vv{v}+\one)=  \deg(\vv{u})$ and $\crit(\ideald,\vv{v}+\one)>\crit(\ideald,\vv{u})$, meaning that $\vv{v}\in \mathcal{A}$.
\end{proof}

Below, we present a concise description of the behavior of the Frobenius powers of $\ideald$ in an important special case.

\begin{corollary}
\label{diagonal and p=1: C}
If $p \gg 0$ and $p \equiv 1 \bmod \D$, then the only critical exponents of $\ideald$ in the open unit interval are the rational numbers $k/\D$ for some integer $n \leq k < \D$, and \[ \frob{\ideald}{k/\D} = R_{>k-n}.\]
\end{corollary}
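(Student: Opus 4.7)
The plan is to specialize \Cref{crits of general diagonal ideal: T,Frobenius powers of general diagonal ideal: T} to the congruence $p \equiv 1 \bmod \D$, under which the auxiliary extended integer $s$ collapses to $\infty$ and the set $\mathcal{A}$ from \Cref{unbalanced diag: setup} becomes empty.

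To classify the critical exponents of $\ideald$ in the open unit interval, I would begin with \Cref{critical exponents of unbalanced diagonal can be taken with respect to diagonal ideals: P}, which reduces the question to $\crit(\ideald, \vv{u})$ for $\vv{u} \in \NN^n$ with $\vv{0} < \vv{u} < \vv{\D}$. Each coordinate $\bar{u}_i = \D u_i / d_i$ then lies in $[1, \D-1]$, so $\lpr{\bar{u}_i}{\D} = \bar{u}_i$; combined with $p^e \equiv 1 \bmod \D$, this gives $\lpr{\vectorscalar{p^e}{\bar{u}_i}}{\D} = \bar{u}_i$ for every $e \geq 1$. Summing over $i$, $\norm{\lpr{\vectorscalar{p^e}{\bar{\vv{u}}}}{\D}} = \deg(\vv{u})$ for all $e$, so the infimum defining $s$ in \Cref{crits of general diagonal ideal: T}(2) is vacuous whenever $\deg(\vv{u}) \leq \D$, yielding $s = \infty$ and $\crit(\ideald, \vv{u}) = \deg(\vv{u})/\D$. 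If instead $\deg(\vv{u}) > \D$, part (1) of the same theorem forces $\crit(\ideald, \vv{u}) = 1$. Setting $k = \deg(\vv{u})$, the critical exponents in $(0,1)$ are therefore of the form $k/\D$, and the constraints $\bar{\vv{u}} \geq \one$ and $k < \D$ force $n \leq k < \D$.

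For the Frobenius power at $\lambda = k/\D$, I would apply \Cref{Frobenius powers of general diagonal ideal: T}, which gives $\frob{\ideald}{\lambda} = R_{>\deg(\vv{u} - \one)} + \ideala$. The key claim is that $\mathcal{A}$ is empty. Any $\vv{w} = \vv{v} + \one > \vv{0}$ with $\deg(\vv{w}) = k < \D$ must satisfy $\vv{w} < \vv{\D}$, since having some $w_i \geq d_i$ would force $\deg(\vv{w}) \geq w_i \cdot \D/d_i \geq \D$, a contradiction. Hence the formula from step one applies to $\vv{w}$ and yields $\crit(\ideald, \vv{w}) = k/\D$, which is not strictly greater than $\lambda$. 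Therefore $\ideala = 0$, and $\frob{\ideald}{k/\D} = R_{>\deg(\vv{u} - \one)} = R_{>k - n}$.

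The main obstacle is verifying that $\mathcal{A}$ is empty, which reduces to showing that under $p \equiv 1 \bmod \D$ the critical exponent $\crit(\ideald, \cdot)$ depends only on the degree. This in turn follows from the $s = \infty$ collapse established in the first half of the argument, so the remainder of the proof amounts to a careful unpacking of the preceding results.
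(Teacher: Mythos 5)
Your approach matches the paper's: reduce via \Cref{critical exponents of unbalanced diagonal can be taken with respect to diagonal ideals: P}, establish that the extended integer $s$ is infinite for every relevant point, and then apply \Cref{Frobenius powers of general diagonal ideal: T} after showing $\mathcal{A} = \emptyset$. The paper handles the $s = \infty$ step by citing \Cref{balancing crits: L} together with \Cref{balanced diagonal special congruences: remark}, while you inline the residue computation directly inside \Cref{crits of general diagonal ideal: T}; these are the same argument. Your explicit justification that $\mathcal{A}$ is empty --- observing that any $\vv{v}+\one$ of degree $k < \D$ must lie strictly below $\vv{\D}$, hence has critical exponent exactly $k/\D$ and not more --- correctly fills in a step the paper leaves implicit.

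However, the final equality $R_{>\deg(\vv{u}-\one)} = R_{>k-n}$ fails for general $\ideald$. Because $\deg$ is additive, $\deg(\vv{u}-\one) = \deg(\vv{u}) - \deg(\one) = k - \deg(\one)$ with $\deg(\one) = \sum_i \D/d_i$, and this sum equals $n$ only in the balanced case $d_1 = \cdots = d_n = \D$. For example, with $\ideald = \ideal{x^6, y^4}$ one has $\D = 12$, $n = 2$, $\deg(\one) = 5$; taking $\vv{u} = (1,1)$ gives $k = 5$, so $R_{>\deg(\vv{u}-\one)} = R_{>0} = \idealm$, while $R_{>k-n} = R_{>3} = \idealm^2$. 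A direct check (or the example in the paper) shows $\frob{\ideald}{5/12} = \idealm$, so the correct formula is $R_{>k-\deg(\one)}$, equivalently $R_{>\deg(\vv{u}-\one)}$. This looks like a misprint in the statement of the corollary itself --- note that $R_{>k-\deg(\one)}$, not $R_{>k-n}$, is what matches Howald's description of $\mathcal{J}(\ideald^{k/\D})$, as needed in the proof of \Cref{test=multiplier for diagonal: T} --- and your proof has reproduced the misprint rather than caught it. Everything up to that last simplification is sound.
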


\begin{proof}
The assertion regarding the critical exponents of $\ideald$ follows from \Cref{critical exponents of unbalanced diagonal can be taken with respect to diagonal ideals: P}, \Cref{balancing crits: L}, and \Cref{balanced diagonal special congruences: remark}, and the formula for the Frobenius powers at the parameters $k/\D$ follows from this description of the critical exponents of $\ideald$ and \Cref{Frobenius powers of general diagonal ideal: T}.
\end{proof}

In the balanced case considered in the previous section, where $d_1= \cdots = d_n = \D$, the grading on the ambient polynomial ring is the standard one.
In other words, $\deg(\vv{u}) = \norm{\vv{u}}$ for every $\vv{u} \in \NN^n$, and so \Cref{Frobenius powers of general diagonal ideal: T} takes the following simple form. 

\begin{theorem}  
\label{Frobenius powers of balanced diagonal ideal: T}
Suppose that $\ideald = \ideal{x_1^\D, \ldots, x_n^\D}$ and $p \gg 0$. If $k$ is an integer with $n \leq k \leq \D$, and $\vv{u} \in \comp(k,n)$ is a point such that $\lambda = \crit(\ideald, \vv{u})$ lies in the open unit interval, then 
 \[ 
 \pushQED{\qed}
 \frob{\ideald}{\lambda} = \idealm^{k-n+1} + \ideal{x^{\vv{v}} :  \norm{\vv{v}} = k-n \text{ and } \crit(\ideald, \vv{v}+\one) > \lambda}. \qedhere\popQED\]
\end{theorem}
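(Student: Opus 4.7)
The plan is to derive this statement as a direct specialization of \Cref{Frobenius powers of general diagonal ideal: T} to the balanced case. When $d_1 = \cdots = d_n = \D$, the quantity $\D/d_i = 1$ for each $i$, so $\bar{\vv{v}} = \vv{v}$ for all $\vv{v} \in \NN^n$; consequently the grading on $R$ becomes the standard one, and $\deg(\vv{v}) = \norm{\vv{v}}$ throughout. This is the single observation that drives everything.

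With this in hand, I would first check that the hypotheses of \Cref{unbalanced diag: setup} are in force: since $\vv{u} \in \comp(k,n)$ satisfies $\vv{u} > \vv{0}$ and $\crit(\ideald, \vv{u}) = \lambda < 1$ by assumption, we are entitled to apply \Cref{Frobenius powers of general diagonal ideal: T} and conclude
\[ \frob{\ideald}{\lambda} = R_{>\deg(\vv{u}-\one)} + \ideala. \]

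Next, I would identify the two summands under the standard grading. Since $\norm{\vv{u}} = k$, we have $\deg(\vv{u}-\one) = k - n$, so
\[ R_{>\deg(\vv{u}-\one)} = \ideal{x^{\vv{v}} : \norm{\vv{v}} > k-n} = \idealm^{k-n+1}, \]
where the second equality is immediate from the definition of $\idealm^{k-n+1}$. For the second summand, the defining condition $\deg(\vv{v}+\one) = \deg(\vv{u})$ on the set $\mathcal{A}$ becomes $\norm{\vv{v}+\one} = k$, i.e.\ $\norm{\vv{v}} = k-n$, and the condition $\crit(\ideald, \vv{v}+\one) > \crit(\ideald, \vv{u}) = \lambda$ is unchanged. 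Hence
\[ \ideala = \ideal{x^{\vv{v}} : \norm{\vv{v}} = k-n \text{ and } \crit(\ideald, \vv{v}+\one) > \lambda}, \]
and combining the two summands yields the desired formula.

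There is no real obstacle here: the theorem is essentially a notational translation of \Cref{Frobenius powers of general diagonal ideal: T} once one observes that $\bar{\vv{v}} = \vv{v}$ and $\deg = \norm{\cdot}$ in the balanced setting. The only point worth stating explicitly in the write-up is this identification, after which both summands rewrite themselves.
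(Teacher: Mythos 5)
Your proof is correct and matches the paper's approach exactly: the paper likewise derives this theorem as an immediate specialization of \Cref{Frobenius powers of general diagonal ideal: T}, observing that in the balanced case the weighted grading becomes the standard one, so that $\deg = \norm{\cdot}$ and both summands translate directly. The paper in fact gives no further proof beyond this observation (hence the \qed in the theorem statement), so your write-up, if anything, spells out the translation more explicitly than the source.
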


\subsection{Examples}
\label{diagonal examples: SS}

\begin{example}
\label{diagonal simple 1: E}
Consider the balanced diagonal ideal $\ideald = \ideal{x^7, y^7, z^7}$, contained in $\idealm = \ideal{x, y, z}$.
If $p \gg 0$ and $p \equiv 6 \bmod 7$, then we have
\begin{equation*}
   \frob{\ideald}{t}=
   \begin{cases}
      \kk[x,y,z] &\text{if } t \in \big[0, \frac{3}{7} - \frac{11}{7p} \big) \\
      \idealm &\text{if } t \in \big[\frac{3}{7} - \frac{11}{7p}, \frac{4}{7} - \frac{10}{7p} \big) \\
      \idealm^2 &\text{if } t \in \big[\frac{4}{7} - \frac{10}{7p} , \frac{5}{7} - \frac{9}{7p} \big) \\
      \idealm^3 &\text{if } t \in \big[ \frac{5}{7} - \frac{9}{7p} , \frac{6}{7} - \frac{8}{7p}\big) \\
      \idealm^4 &\text{if } t \in \big[ \frac{6}{7} - \frac{8}{7p}, 1- \frac{1}{p} \big) \\
      \idealm^5 &\text{if } t \in \big[ 1 - \frac{1}{p} , 1 \big)
   \end{cases}
\end{equation*}
The absence of the second monomial summand from \Cref{Frobenius powers of balanced diagonal ideal: T} reflects the fact that in this setting, the critical exponents $\crit(\ideald, \vv{u})$ associated to points $\vv{u} \in \comp(k,n)$ are independent of $\vv{u}$ for every $3 \leq k \leq 7$.
This observation is a special case of the computation appearing in \Cref{balanced diagonal special congruences: remark}.

The reader should also contrast this with \Cref{power of m simple 1: E}---for instance, we saw there that $\frob{(\idealm^7)}{t}\neq \idealm^5$ for every parameter $0 \leq t < 1$.
\end{example}		
		
\begin{example}  Let $\ideald$ and $\idealm$ be as in \Cref{diagonal simple 1: E}.
If we instead suppose that $p \gg 0$ and $p \equiv 5 \bmod 7$, then 
		\begin{equation*}
			\frob{\ideald}{t}=
			\begin{cases} 
				\kk[x,y,z] &\text{if } t \in \big[0, \frac{3}{7} - \frac{8}{7p} \big) \\ 		
				\idealm &\text{if } t \in \big[\frac{3}{7} - \frac{8}{7p}, \frac{4}{7} - \frac{6}{7p} \big) \\ 	
				\idealm^2 &\text{if } t \in \big[\frac{4}{7} - \frac{6}{7p} , \frac{5}{7} - \frac{4}{7p} \big) \\ 		
				\idealm^3 &\text{if } t \in \big[ \frac{5}{7} - \frac{4}{7p} , \frac{6}{7} - \frac{9}{7p}\big) \\ 
				\idealm^4 + \ideal{xyz, x^2y, x^2z, xy^2,xz^2, y^2z, yz^2} &\text{if } t \in \big[ \frac{6}{7} - \frac{9}{7p}, \frac{6}{7} - \frac{2}{7p} \big) \\ 
				\idealm^4 &\text{if } t \in \big[ \frac{6}{7} - \frac{2}{7p}, 1- \frac{1}{p} \big) \\ 
				\idealm^5 + \ideal{x^2 y z, x y^2 z, x y z^2, x^2 y^2, x^2 z^2, y^2 z^2} &\text{if } t \in \big[ 1 - \frac{1}{p}, 1- \frac{1}{p^2} \big) \\ 
				\idealm^5 + \ideal{x^2 y z, x y^2 z, x y z^2} &\text{if } t \in \big[ 1 - \frac{1}{p^2}, 1- \frac{1}{p^3} \big) \\ 
				\idealm^5 &\text{if } t \in \big[ 1 - \frac{1}{p^3} , 1 \big)
		\end{cases}
		\end{equation*}
	As in \Cref{diagonal simple 1: E}, the absence of a second monomial summand in the first handful of formulas is due to the fact that $\crit(\ideald, \vv{u})$ is independent of the point $\vv{u} \in \comp(k,n)$ for every $3 \leq k \leq 5$.    
	For $k=3$ and $k=4$, this is explained by observing that there is a unique point in $\comp(k,n)$, up to permutation of coordinates. 
	When $k=5$, the set $\comp(k,n)$ consists of $(3,1,1)$ and $(2,2,1)$ and permutations, and these points happen to give rise to the same critical exponent for our choice of $p$. 
	
	On the other hand, when $k=6$ and $k=7$,  $\crit(\ideald, \vv{u})$ takes on multiple values (two when $k=6$, and three when $k=7)$ as $\vv{u} \in \comp(k,n)$ varies.
\end{example} 

In the following two examples, we see that for balanced diagonal ideals, the number of critical exponents with respect to points in $\comp(k,n)$, for some fixed $n \leq k \leq \D$, can be quite large, especially when $k$ is close to $\D$.    
Note that once $k$ is fixed, this number of critical exponents is equal to the number of distinct Frobenius powers $\frob{\ideald}{t}$ with $t\in\big(\frac{k-1}\D, \frac k\D\big]$, for $p\gg 0$.  

\begin{example} Suppose $\ideald = \ideal{x^{35}, y^{35}, z^{35}, w^{35}}$. If $p \gg 0$ and $p \equiv 3 \bmod 35$, then $\ideald$ has ten critical exponents in the interval $\left( \frac{34}{35}, 1 \right)$,
namely, 
\[
1 - \frac{i}{p^j}, \text{ for } 1 \leq i \leq 2  \text{ and } 1 \leq j \leq 5. 
\]
\end{example}

\begin{example}
Suppose $\ideald = \ideal{x^{47}, y^{47}}$.  If $p \gg 0$ and $p \equiv 7 \bmod 47$, then 
the ten critical exponents of $\ideald$ in the interval $\left( \frac{39}{47}, \frac{40}{47} \right)$ are
\begin{alignat*}{5}
&\frac{40}{47} - \frac{45}{47 p} & \hspace{0.8cm} & \frac{40}{47} - \frac{33}{47 p^2} & \hspace{0.8cm} & \frac{40}{47} - \frac{43}{47 p^3} & \hspace{0.8cm} & \frac{40}{47} - \frac{19}{47 p^4} & \hspace{0.8cm} & \frac{40}{47} - \frac{39}{47 p^5} \\
&\frac{40}{47} - \frac{31}{47 p^7} & \hspace{0.8cm} & \frac{40}{47} - \frac{29}{47 p^8} & \hspace{0.8cm} & \frac{40}{47} - \frac{15}{47 p^9} & \hspace{0.8cm} & \frac{40}{47} - \frac{11}{47 p^{10}} & \hspace{0.8cm} & \frac{40}{47} - \frac{13}{47 p^{13}}
\end{alignat*}
\end{example}

We conclude this section with an example in the unbalanced setting.

\begin{example}
Fix ideals $\ideald = \ideal{x^6, y^4} \subseteq \idealm = \ideal{x, y}$ of $R = \kk[x,y]$. 
\begin{figure}
\begin{center}  
    \includegraphics[width=0.5\textwidth]{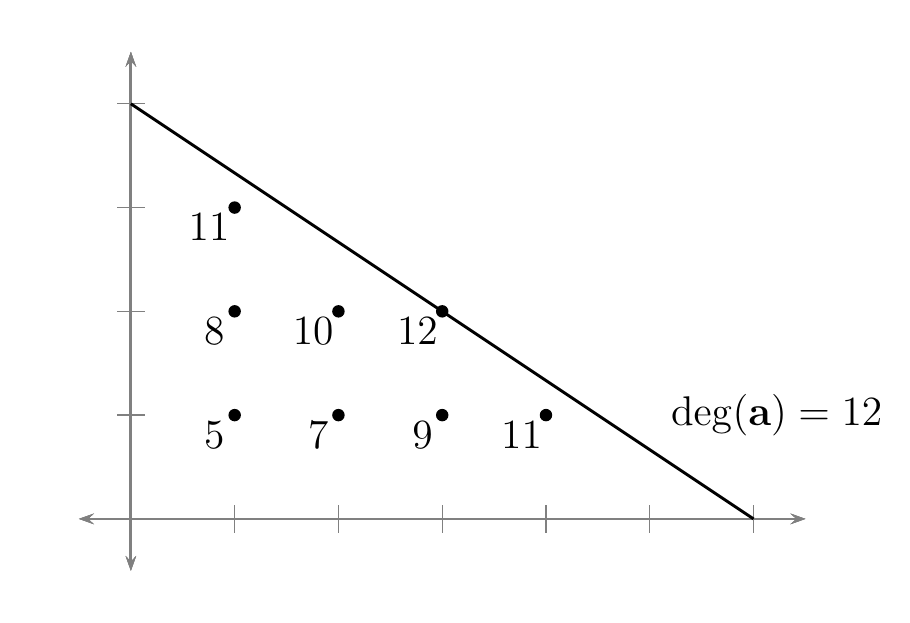}
\end{center}
\vspace{-0.8cm}
\caption{ \label{unbalanced-example: figure}  All $\vv{u} \in \NN^2$ with $\vv{u} > \vv{0}$ and $\deg(\vv{u}) \leq 12$.} 
\end{figure}
Here, $\deg(x)=2$ and $\deg(y)=3$, and the set of all points $\vv{u} > \vv{0}$ in $\NN^2$ for which $\deg(\vv{u}) \leq 12$ determines all critical exponents of $\ideald$ in the unit interval; \Cref{unbalanced-example: figure} illustrates these points, and their degrees. 
When $p \gg 0$ and $p \equiv 5 \bmod 12$, the points
 $(1,3)$ and $(4,1)$, both of degree $11$, determine distinct critical exponents, and the critical exponent with respect to $(3,2)$ is $1$.  Moreover,
		\begin{equation*}
			\frob{\ideald}{t}=
			\begin{cases} 
				R &\text{if } t \in \big[0, \frac{5}{12} - \frac{1}{12p} \big) \\ 		
				\idealm &\text{if } t \in \big[\frac{5}{12} - \frac{1}{12p}, \frac{7}{12}  \big) \\ 	
				R_{> 2} &\text{if } t \in \big[\frac{7}{12}, \frac{2}{3} - \frac{1}{3p} \big) \\ 		
				R_{> 3} &\text{if } t \in \big[ \frac{2}{3} - \frac{1}{3p} , \frac{3}{4} \big) \\ 
				R_{> 4} &\text{if } t \in \big[ \frac{3}{4}, \frac{5}{6} - \frac{1}{6p} \big) \\ 
				R_{> 5} &\text{if } t \in \big[ \frac{5}{6}-\frac{1}{6p}, \frac{11}{12}- \frac{7}{12p} \big) \\ 
				R_{> 6} + \ideal{x^3}  &\text{if } t \in \big[ \frac{11}{12}- \frac{7}{12p}, \frac{11}{12} \big) \\ 
				R_{> 6} &\text{if } t \in \big[ \frac{11}{12}, 1 \big) \\ 
		\end{cases}
		\end{equation*}
\end{example}

\section{Applications to generic hypersurfaces}

In this section, we leverage our results on the Frobenius powers and critical exponents of the monomial ideals $\idealm^\D$ and $\ideald$ to study the test ideals and $F$-jumping exponents of generic elements of these ideals, that is, various $\kk^{\ast}$-linear combinations of their monomial generators.

We start by considering generic elements of $\idealm^\D$.
The first statement below realizes the test ideals of a \emph{very general} homogeneous polynomial of degree $\D$ as the Frobenius powers of its term ideal,  which can then be explicitly computed using \Cref{Frobenius powers of m^d: T}, as in \Cref{m^d examples: SS}.
This conclusion is significantly stronger than that in the second statement, which instead concerns a subset of the $F$-jumping exponents of a \emph{general} homogeneous polynomial.

\begin{corollary}
\label{generic homogeneous poly: C}
Suppose $p>\D$.
Let $x^{\vv{a}_1}, \ldots, x^{\vv{a}_l}$ be the distinct monomials of degree $\D$ in the variables $x_1, \ldots, x_n$, and consider the polynomial 
 \[ f = \alpha_1  x^{\vv{a}_1} + \cdots + \alpha_l x^{\vv{a}_l} \in \idealm^\D \]
where $\alpha=(\alpha_1, \ldots, \alpha_l) \in \kk^l$.
\begin{enumerate}
\item If $\kk=\FF_p(\alpha_{1},\ldots,\alpha_{l})$, and $\alpha_{1},\ldots,\alpha_{l}$ are algebraically independent over $\FF_p$, then $\tau(f^{t}) =  \frob{(\idealm^\D)}{t}$ for every $0 < t < 1$.
\item There exists a finite set of nonzero polynomials $\Sigma \subseteq \FF_p[z_1, \ldots, z_l]$, which may depend on $n$ and $\D$, but is independent of the field $\kk$, with the following property\textup:  If $\alpha$ is not in the closed set $\VV(\Sigma)$ of $\kk^l$, then every critical exponent of $\idealm^\D$ is an $F$-jumping exponent of $f$.  
\end{enumerate}
\end{corollary}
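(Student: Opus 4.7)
To prove part (1), I proceed as follows. Since $(f)$ is principal, $\tau(f^{t}) = \frob{(f)}{t}$; and since $(f) \subseteq \idealm^{\D}$, monotonicity of Frobenius powers yields $\tau(f^{t}) \subseteq \frob{(\idealm^{\D})}{t}$ for every $t$. For the reverse containment on $(0,1)$, I would invoke (or reprove in this monomial setting) the companion paper's characterization of test ideals of a very generic element of a fixed ideal: when $f = \sum_{i=1}^{l} \alpha_{i} h_{i}$ with $\alpha_{1}, \ldots, \alpha_{l}$ algebraically independent over $\FF_{p}$, the equality $\tau(f^{t}) = \frob{\ideala}{t}$ holds for $\ideala = (h_{1}, \ldots, h_{l})$. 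The key input is the multinomial expansion $f^{m} = \sum_{\norm{\vv{s}}=m} \binom{m}{\vv{s}} \alpha^{\vv{s}} h^{\vv{s}}$ together with the adjunction $\frob{\ideala}{1/q} \subseteq \idealb \Longleftrightarrow \ideala \subseteq \frob{\idealb}{q}$: the algebraic independence of the $\alpha_{i}$ over $\FF_{p}$ forces each surviving multinomial summand $h^{\vv{s}}$ with $\binom{m}{\vv{s}} \not\equiv 0 \bmod p$ to lie in any $\frob{\idealb}{q}$ that contains $f^{m}$. Since these summands are precisely the generators of $\frob{(\idealm^{\D})}{m}$, taking $\idealb$ to be the smallest possibility yields $\frob{(\idealm^{\D})}{m/q} \subseteq \frob{(f)}{m/q}$, and letting $m/q \to t$ from above delivers the claimed equality throughout $(0,1)$.

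For part (2), I use the discreteness of critical exponents: by \Cref{critical exponents of m^d: T} and \Cref{crits in open interval for m^d: R}, the critical exponents of $\idealm^{\D}$ in $[0,1)$ form a finite set $\lambda_{1} < \cdots < \lambda_{N}$. For each $j$, pick $\varepsilon_{j} > 0$ small enough that no other critical exponent lies in $(\lambda_{j} - \varepsilon_{j}, \lambda_{j})$, and select a monomial $g_{j} \in \frob{(\idealm^{\D})}{\lambda_{j} - \varepsilon_{j}} \setminus \frob{(\idealm^{\D})}{\lambda_{j}}$ (which exists because the latter is strictly contained in the former by the definition of critical exponent). Writing $\lambda_{j} - \varepsilon_{j} = m_{j}/q_{j}$, the condition $g_{j} \in \tau(f^{m_{j}/q_{j}}) = \frob{(f^{m_{j}})}{1/q_{j}}$ translates, via the description of $\frob{(f^{m_{j}})}{1/q_{j}}$ through the $R^{q_{j}}$-decomposition of $f^{m_{j}}$ in a fixed free basis of $R$ over $R^{q_{j}}$, to a non-vanishing polynomial condition $\sigma_{j}(\alpha) \neq 0$ in $\FF_{p}[z_{1}, \ldots, z_{l}]$. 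By part (1), the Zariski-open set $\{\sigma_{j} \neq 0\}$ is non-empty, so $\sigma_{j} \not\equiv 0$. Meanwhile, the universal containment $\tau(f^{\lambda_{j}}) \subseteq \frob{(\idealm^{\D})}{\lambda_{j}}$ forces $g_{j} \notin \tau(f^{\lambda_{j}})$ unconditionally on $\alpha$. Setting $\Sigma = \{\sigma_{1} \cdots \sigma_{N}\}$ produces the desired finite set: any $\alpha \notin \VV(\Sigma)$ satisfies $g_{j} \in \tau(f^{\lambda_{j} - \varepsilon_{j}}) \setminus \tau(f^{\lambda_{j}})$ for every $j$, witnessing each $\lambda_{j}$ as an $F$-jumping exponent of $f$.

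The main obstacle is the reverse containment in part (1). Because $\frob{(f^{m})}{1/q}$ is not a priori a monomial ideal, extracting the monomial generators of $\frob{(\idealm^{\D})}{m}$ from the containment $f^{m} \in \frob{\idealb}{q}$ requires handling potentially non-monomial $\idealb$ through a careful coefficient-comparison argument that exploits the algebraic independence of the $\alpha_{i}$ over $\FF_{p}$. Once part (1) is secured, part (2) follows routinely by combining it with the finiteness of critical exponents and the fact that test-ideal membership for a generic polynomial is a Zariski-open condition on its coefficients.
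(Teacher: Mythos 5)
Your outline of part (1) sketches what is presumably the content of the companion paper's Corollary~5.7, which the paper simply cites; you flag the crucial difficulty yourself (that $\idealb$ need not be monomial, so extracting the surviving summands $h^{\vv{s}}$ from $f^m \in \frob{\idealb}{q}$ requires a real argument), and there is a second subtlety you pass over: $\FF_p(\alpha)$ is not perfect, so the $R^q$-decomposition involves a basis of $\kk$ over $\kk^q$ that itself depends on $\alpha$. As written, your part~(1) is an honest sketch that defers exactly the hard step the paper handles by citation, so it is not yet a proof.

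For part~(2) you take a genuinely different route. The paper invokes Theorem~5.16 of the companion paper to conclude that for generic $\alpha$ one has $\crit(\idealm^\D, \vv{u}) = \crit(f, \vv{u})$ for each relevant $\vv{u}$, and then observes that $\crit(f,\vv{u})$ is an $F$-jumping exponent of the principal ideal $\ideal{f}$. Your witness argument has a gap: the claim that ``$g_j \in \tau(f^{m_j/q_j})$ translates to a non-vanishing polynomial condition $\sigma_j(\alpha) \neq 0$'' is not justified, because $\tau(f^{m_j/q_j}) = \frob{(f^{m_j})}{1/q_j}$ need not be a monomial ideal, and membership of a fixed monomial in a non-monomial ideal whose generators have coefficients involving $q_j$-th roots of $\alpha$-polynomials is a priori only a constructible condition, not an open one. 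The repair is to replace ``$g_j \in \tau(f^{m_j/q_j})$'' with the non-containment ``$\tau(f^{m_j/q_j}) \not\subseteq \diag(\vv{w}_j+\one)$,'' where $g_j = x^{\vv{w}_j}$: this is equivalent to $f^{m_j} \notin \frob{\diag(\vv{w}_j+\one)}{q_j}$, i.e.\ to the non-vanishing of some coefficient of $f^{m_j}$ on a monomial $x^{\vv{b}}$ with $\vv{b} \leq q_j(\vv{w}_j+\one) - \one$, which is an honest polynomial condition in $\alpha$ over $\FF_p$. It still suffices: since $x^{\vv{w}_j} \notin \frob{(\idealm^\D)}{\lambda_j}$ and the latter is a \emph{monomial} ideal, one has $\frob{(\idealm^\D)}{\lambda_j} \subseteq \diag(\vv{w}_j+\one)$, hence $\tau(f^{\lambda_j}) \subseteq \diag(\vv{w}_j+\one)$, and the non-containment at $\lambda_j - \varepsilon_j$ forces $\tau(f^{\lambda_j - \varepsilon_j}) \supsetneq \tau(f^{\lambda_j})$. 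With that repair your argument for part~(2) works, and it amounts to proving (a special case of) the companion paper's Theorem~5.16 rather than citing it.
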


\begin{proof}
   In the first case, we have that $\tau(f^{t}) =  \frob{(\idealn^\D)}{t} R = \frob{(\idealm^\D)}{t}$, where $\idealn = \ideal{x_1, \ldots, x_n}  \subseteq \FF_p[x_1, \ldots, x_n]$.
   Indeed, the first equality follows from  \cite[Corollary~5.7]{hernandez+etal.frobenius_powers}, and the second from the fact that $\idealn$ is a monomial ideal and $\idealm = \idealn R$.

For the second statement, let $\lambda$ be a critical exponent of $\idealm^\D$.  
By the versions of Skoda's Theorem for Frobenius powers and test ideals (\cite[Corollary~3.17]{hernandez+etal.frobenius_powers}, \cite[Proposition~2.25]{blickle+mustata+smith.discr_rat_FPTs}), we may assume that $0<\lambda<1$. 
 \Cref{small crits of m^D: P,degenerate m^D: P} tell us that $\lambda = \crit(\idealm^\D, \vv{u})$, where $\vv{u} \in \NN^n$ satisfies $\vv{u} > \vv{0}$ and $\norm{\vv{u}}< \D$.
It follows from \cite[Theorem~5.16]{hernandez+etal.frobenius_powers} that for each $\vv{u}$, there exists a set $\Sigma_{\vv{u}} \subseteq \FF_p[z_1, \ldots, z_l]$ of nonzero polynomials such that $\lambda = \crit(f, \vv{u})$ whenever $\alpha \notin \VV(\Sigma_{\vv{u}})$.  
We can then take $\Sigma$ to be the union of all $\Sigma_{\vv{u}}$, where $\vv{u} \in \NN^n$ varies through the finitely many points satisfying the above conditions. 
\end{proof}

We now turn our attention to {arbitrary} diagonal polynomials.  Unlike \Cref{generic homogeneous poly: C}, the following result does not follow directly from ones in \cite{hernandez+etal.frobenius_powers}, but instead from a computation of test ideals.  In its proof, we will need to refer to a well-known result from \cite{blickle+mustata+smith.F-thresholds_hyper} describing test ideals of hypersurfaces at special parameters.  

\begin{proposition}  
\label{diagonal poly: P}
Consider $g = \beta_1 x_1^{d_1} + \cdots + \beta_n x_n^{d_n} \in \ideald=\ideal{x_1^{d_1},\ldots,x_n^{d_n}}$, where each $\beta_i \in \kk^{\ast}$.   
If $p$ does not divide any $d_i$, then $\tau(g^{t}) = \frob{\ideald}{t}$ for every $0 < t < 1$.
\end{proposition}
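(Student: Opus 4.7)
The plan is to use the fact, recalled in Section~2, that Frobenius powers and test ideals agree for principal ideals, so $\tau(g^t) = \frob{(g)}{t}$. The proposition then reduces to the equality $\frob{(g)}{t} = \frob{\ideald}{t}$ for $t \in (0, 1)$. Both families are right-continuous in $t$, so it suffices to verify the equality at all parameters of the form $t = m/q$ with $q = p^e$ and $0 < m < q$, as these are dense in $(0, 1)$.

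One inclusion is easy. The multinomial expansion
\[
   g^m = \sum_{\norm{\vv{s}} = m} \binom{m}{\vv{s}}\, \beta^{\vv{s}}\, x_1^{d_1 s_1} \cdots x_n^{d_n s_n}
\]
shows that $g^m$ is an $R$-linear combination of the monomial generators of $\frob{\ideald}{m}$, so $\frob{(g)}{m} = (g^m) \subseteq \frob{\ideald}{m}$. Applying the Frobenius root $\frob{\cdot}{1/q}$, which is monotone in its input, yields $\frob{(g)}{m/q} \subseteq \frob{\ideald}{m/q}$. For the reverse, I will compute both Frobenius roots explicitly. On the diagonal side, $\frob{\ideald}{m}$ is the monomial ideal generated by $x_1^{d_1 s_1} \cdots x_n^{d_n s_n}$ over those $\vv{s}$ with $\binom{m}{\vv{s}} \not\equiv 0 \bmod p$, so $\frob{\ideald}{m/q}$ is the monomial ideal with generators $x_1^{\down{d_1 s_1/q}} \cdots x_n^{\down{d_n s_n/q}}$ over the same index set.

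The crux of the proof, and what I expect to be the main obstacle, is computing $\frob{(g)}{m/q} = \frob{(g^m)}{1/q}$ explicitly and showing it is a monomial ideal with these same generators. Fixing a $p^e$-basis $\{\lambda_j\}$ of $\kk$ over $\kk^{p^e}$ gives the free $R^{p^e}$-module decomposition $R = \bigoplus_{\vv{a}, j} \lambda_j x^{\vv{a}} R^{p^e}$ with $\vv{a} \in \{0, \ldots, q-1\}^n$; writing $g^m$ uniquely in this basis as $\sum_{\vv{a}, j} \lambda_j x^{\vv{a}} F_{\vv{a}, j}^{p^e}$, the Frobenius root is the ideal generated by the $F_{\vv{a}, j}$. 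Decomposing $(d_1 s_1, \ldots, d_n s_n) = \vv{a}(\vv{s}) + q\, \vv{b}(\vv{s})$ with $\vv{a}(\vv{s}) \in \{0, \ldots, q-1\}^n$, each $F_{\vv{a}, j}$ is, using Frobenius additivity over $\kk$, a sum of monomials $x^{\vv{b}(\vv{s})}$ over those $\vv{s}$ with $\vv{a}(\vv{s}) = \vv{a}$. The hypothesis $p \nmid d_i$ enters decisively here: componentwise, multiplication by $d_i$ is a bijection on $\ZZ/q\ZZ$, so for each $\vv{a}$ there is a \emph{unique} $\vv{s} \in \{0, \ldots, q-1\}^n$ with $d_i s_i \equiv a_i \bmod q$ for all~$i$. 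Since $0 < m < q$ forces every $\vv{s}$ with $\norm{\vv{s}} = m$ to lie in $\{0, \ldots, q-1\}^n$, each nonzero $F_{\vv{a}, j}$ collapses to a scalar multiple of the single monomial $x^{\vv{b}(\vv{s})}$. The resulting generators match those of $\frob{\ideald}{m/q}$ computed above, closing the argument.
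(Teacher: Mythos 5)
Your argument is correct and follows essentially the same route as the paper's: both reduce to $t = m/q$ with $m < q$, expand $g^m$ multinomially, and use the hypothesis $p \nmid d_i$ (via bijectivity of multiplication by $d_i$ on $\mathbb{Z}/q\mathbb{Z}$, together with the bound $m < q$) to conclude that the surviving terms of $g^m$ land in distinct residue classes modulo $q$, so that taking the $q$-th Frobenius root of $(g^m)$ agrees with taking it of the term ideal $\frob{\ideald}{m}$. The only cosmetic difference is that you unwind the Frobenius-root computation by hand via a $p^e$-basis decomposition, where the paper instead invokes Proposition~2.5 of Blickle--Musta\c{t}\u{a}--Smith for that step.
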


\begin{proof}
We can assume that $t = m/q$, for $q$ a power of $p$, and $m$ a positive integer less than $q$.
Then $\tau(g^{t}) = \frob{\ideal{g^m}}{1/q}$ by  \cite[Lemma~2.1]{blickle+mustata+smith.F-thresholds_hyper}.  
If $D$ is the diagonal matrix whose $i$-th diagonal entry is $d_i$, then $g^m = \sum_{\vv{u} \in \Omega} \binom{m}{\vv{u}} \beta^{\vv{u}} x^{D \vv{u}}$, where $\Omega$ consists of all  points $\vv{u} \in \NN^n$ with  $\binom{m}{\vv{u}} \not\equiv 0 \bmod p$.
We claim that if $\vv{u}, \vv{v} \in \Omega$ are distinct,  then the remainders when dividing $D \vv{u}$ and $D\vv{v}$ by $q$ are distinct.
This is equivalent to the assertion that the monomials $x^{D \vv{u}}$ and $x^{D \vv{v}}$ are $R^q$-multiples of distinct elements of the monomial basis for $R$ over $R^q$.
Granting this, it then follows from the description of Frobenius $q$-th roots provided in \cite[Proposition~2.5]{blickle+mustata+smith.discr_rat_FPTs} that  
\[ \tau(g^{t}) = \frob{\ideal{g^m}}{1/q} = \frob{\ideal{ x^{D\vv{u}} : \vv{u} \in \Omega}}{1/q} = \frob{\ideald}{t},\] 
as desired.

It remains to justify the claim.
We verify the contrapositive:  $D \vv{u}$ and $D \vv{v}$ have the same remainder when dividing by $q$ if and only if $d_i u_i \equiv d_i v_i \bmod q$ for every $i$. As $d_i$ is a unit modulo $q$, this is equivalent to $u_i \equiv v_i \bmod q$ for all $i$.  From this, and the bounds  $0 \leq u_i, v_i  \leq \norm{\vv{u}} = \norm{\vv{v}} = m < q$, we finally conclude that $\vv{u} = \vv{v}$.  
\end{proof}

\begin{remark}
The above proof is essentially the same as that of \cite[Lemma~4.7]{hernandez.diag_hypersurf}, where it is proven that $\tau(g^{t})$ is a monomial ideal for each $0<t<1$.
In fact, using \cite[Lemma~2.1]{blickle+mustata+smith.F-thresholds_hyper} and \cite[Proposition~2.5]{blickle+mustata+smith.discr_rat_FPTs} one can see that, more generally, whenever a test ideal $\tau(f^{t})$ of a polynomial $f$ is a monomial ideal, it agrees with the $t$-th Frobenius power of the term ideal of $f$.
\end{remark}

\subsection{Reduction to prime characteristic}  

Recall that if $\ideala$ is an ideal of a polynomial ring over a field of characteristic zero, then its multiplier ideals $\mathcal{J}(\ideala^{t})$ are  ideals of the ambient polynomial ring of $\ideala$ that are indexed by a nonnegative real parameter $t$, which can be defined in terms of resolutions of singularities, or through more analytic means.
For more on this topic, we refer the reader to the concrete survey \cite{blickle+lazarsfeld.intro_multiplier_ideals}, but point out that the general theory of multiplier ideals will \emph{not} be called upon here.  

At this point, it is well known that the multiplier ideals associated to an ideal $\ideala$ as above may be regarded as the limiting values of the test ideals of the reductions of $\ideala$ to characteristic $p>0$, as $p$ tends to infinity (see, \eg \cite{smith.mult-ideal-is-univ-test-ideal, hara+yoshida.generalization_TC_multiplier_ideals}), and it is conjectured that these ideals agree at all parameters for infinitely many primes (see, \eg \cite[Conjecture~1.2]{mustata+srinivas.ordinary_varieties}).
In \Cref{test=multiplier for diagonal: T} below, we verify this conjecture for diagonal hypersurfaces.
To simplify the formalism of reduction to prime characteristic, we restrict our attention to diagonal hypersurfaces with rational coefficients;  the interested reader should be able to adapt our proof to the case of a diagonal hypersurface with coefficients in an arbitrary field of characteristic zero without too much effort.
In the statement of \Cref{test=multiplier for diagonal: T}, we use the subscript ``$p$'' to indicate reduction modulo $p$. 

\begin{theorem}
\label{test=multiplier for diagonal: T}
If $g$ is a diagonal polynomial over $\QQ$, then there exist infinitely many primes $p$ such that $ \tau(g_p^{t}) = \mathcal{J}(g^{t})_p$ for every $t \geq 0$.  
\end{theorem}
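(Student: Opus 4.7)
My plan is to prove the theorem by specializing to the infinite family of primes with $p \equiv 1 \bmod d$, where $d = \operatorname{lcm}(d_1, \ldots, d_n)$; Dirichlet's theorem guarantees this congruence class is infinite, and I will still have infinitely many primes after discarding the finitely many dividing $d$ or the coefficients of $g$. After clearing denominators, assume $g = \sum_i \beta_i x_i^{d_i}$ with $\beta_i \in \ZZ \smallsetminus \{0\}$, and set $\ideald = \ideal{x_1^{d_1}, \ldots, x_n^{d_n}}$.

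For the multiplier-ideal side, I would appeal to Howald's theorem together with non-degeneracy. Since $g$ is diagonal, its restriction to any compact face of the Newton polyhedron of $\ideald$ is again a sum of pure powers, whose partial derivatives vanish only on coordinate hyperplanes, so $g$ is non-degenerate with respect to this polyhedron. Howald's theorem then yields
\[
\mathcal{J}(g^t) = \mathcal{J}(\ideald^t) = \ideal{x^{\vv{v}} : \deg(\vv{v}+\one) > td}
\]
in the grading $\deg(x_i) = d/d_i$ of Section 5. Because this description is coefficient-free, for $p \nmid d$ the reduction $\mathcal{J}(g^t)_p$ is simply the analogous monomial ideal in characteristic $p$.

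For the test-ideal side, \Cref{diagonal poly: P} provides $\tau(g_p^t) = \frob{\ideald_p}{t}$ for $0 < t < 1$, whenever $p \gg 0$ and $p \nmid d \beta_1 \cdots \beta_n$. Imposing in addition that $p \equiv 1 \bmod d$, one has $\lpr{\vectorscalar{p^e}{\bar{\vv{u}}}}{d} = \bar{\vv{u}}$ for every $e \geq 1$, so $\norm{\lpr{\vectorscalar{p^e}{\bar{\vv{u}}}}{d}} = \deg(\vv{u}) \leq d$ whenever $\vv{0} < \vv{u} \leq \vv{\D}$ and $\deg(\vv{u}) \leq d$. Hence $s = \infty$ in \Cref{crits of general diagonal ideal: T}, forcing $\crit(\ideald_p, \vv{u}) = \deg(\vv{u})/d$; by \Cref{critical exponents of unbalanced diagonal can be taken with respect to diagonal ideals: P}, the critical exponents of $\ideald_p$ in $(0, 1)$ match exactly the jumping numbers of $\mathcal{J}(\ideald^t)$ there. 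At each common value $\lambda = \deg(\vv{u})/d$, the set $\mathcal{A}$ in \Cref{unbalanced diag: setup} is empty, since every $\vv{v} + \one$ of the same degree as $\vv{u}$ satisfies $\crit(\ideald_p, \vv{v}+\one) = \lambda$ rather than being strictly greater, so \Cref{Frobenius powers of general diagonal ideal: T} collapses to
\[
\frob{\ideald_p}{\lambda} = R_{>\deg(\vv{u}-\one)} = \ideal{x^{\vv{v}} : \deg(\vv{v}+\one) > \lambda d} = \mathcal{J}(\ideald^\lambda)_p.
\]
Right-continuity of both $t \mapsto \tau(g_p^t)$ and $t \mapsto \mathcal{J}(g^t)_p$ then propagates the equality to all $t \in (0, 1)$; the case $t = 0$ is trivial, and Skoda's theorems for Frobenius powers \cite[Corollary~3.17]{hernandez+etal.frobenius_powers} and multiplier ideals extend the result to every $t \geq 0$.

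The main obstacle I anticipate is the characteristic-zero input: verifying that $g$ is non-degenerate (so that Howald's theorem applies) and that the ensuing monomial description of $\mathcal{J}(g^t)$ commutes with reduction modulo $p$. The positive-characteristic half is essentially bookkeeping: the role of the congruence $p \equiv 1 \bmod d$ is precisely to kill the ``$1/p^s$'' corrections of \Cref{crits of general diagonal ideal: T} and the auxiliary ideal $\ideala$ of \Cref{Frobenius powers of general diagonal ideal: T}, so the Frobenius-power formula reduces to the one delivered by Howald.
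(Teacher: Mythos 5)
Your proposal follows essentially the same route as the paper's proof: restrict to primes $p \equiv 1 \bmod d$, invoke \Cref{diagonal poly: P} to identify $\tau(g_p^t)$ with $\frob{\ideald_p}{t}$, compute those Frobenius powers via \Cref{crits of general diagonal ideal: T} and \Cref{Frobenius powers of general diagonal ideal: T} (your derivation that $s=\infty$ and $\mathcal{A}=\varnothing$ is precisely the content of \Cref{diagonal and p=1: C}, which the paper cites directly), and match against Howald's monomial description of $\mathcal{J}(\ideald^t)$. The only cosmetic difference is that you spell out the non-degeneracy justification for $\mathcal{J}(g^t)=\mathcal{J}(\ideald^t)$ and re-prove the corollary rather than citing it, but the argument and its key steps coincide with the paper's.
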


\begin{proof}  
The versions of Skoda's Theorem for multiplier and test ideals allow us to assume that $t \in (0,1)$, and it is well known that the multiplier ideals of $g$ (when computed over $\QQ$, or any field of characteristic zero, for that matter) are monomial ideals for such parameters.  In fact, $\mathcal{J}(g^t) = \mathcal{J}(\ideald^t)$ for every parameter $t \in (0,1)$, where $\ideald$ is the term ideal of $g$ \cite[Example~9, Corollary~13]{howald.multiplier_ideals_of_generic_polynomials}.  In particular, the reductions $\mathcal{J}(g^t)_p = \mathcal{J}(\ideald^t)_p$ are monomial ideals over $\FF_p$ for every prime $p$.

On the other hand, the reductions $g_p$ for $p \gg 0$ are obtained by regarding the fractions appearing in $g$ as elements of $\FF_p$.  In particular, the reduction $\ideald_p$ of the term ideal $\ideald$ of $g$ is the term ideal of $g_p$ when $p \gg 0$, and it then follows from \Cref{diagonal poly: P} that $\tau(g_p^t) = \frob{\ideald_p}{t}$ when $p \gg 0$ and $t \in (0,1)$.  

Given these observations, it suffices to show that \[ \frob{\ideald_p}{t} = \mathcal{J}(\ideald^t)_p \] for every $t \in (0,1)$ whenever $p \gg 0$ and $p \equiv 1 \bmod \D$, where $\D$ is the least common multiple of the exponents appearing in $\ideald$.  However, in this context, the ideals $\frob{\ideald_p}{t}$ are completely described by \Cref{diagonal and p=1: C}, and it is straightforward to verify that this agrees with the description of the monomial ideal $\mathcal{J}(\ideald^t)$ in terms of the Newton polyhedron of $\ideald$ provided in \cite{howald.multiplier_ideals_of_monomial_ideals}.
\end{proof}

\begin{remark}\label{uniform behavior: R}
	A key observation in the proof of \Cref{test=multiplier for diagonal: T} was that \[ \tau(g_p^t) = \frob{\ideald_p}{t} \text{ when $p \gg 0$ and $t \in (0,1)$. } \]
	Given this, one may use \Cref{Frobenius powers of general diagonal ideal: T} to explicitly compute $\tau(g_p^t)$ when $p \gg 0$,  as in \Cref{diagonal examples: SS}, which, in particular,   
	 shows that the family $(\tau(g_p^t))_t$ depends on the class of $p$ modulo $\D$ in a uniform way.  
	Though we will not pursue this further in this article, we point out that this demonstrates that every diagonal hypersurface satisfies the conditions appearing in \cite[Problems~3.7, 3.8, and~3.10]{mustata+takagi+watanabe.F-thresholds}.
\end{remark}

\begin{remark}  
	Though, as noted above, the property of being a generic element of a monomial ideal is preserved under reduction to characteristic $p  \gg 0$, the property of being a \emph{very general} generic element is not.  
	Consequently, one cannot hope to apply the first part of \Cref{generic homogeneous poly: C} to obtain a statement analogous to \Cref{test=multiplier for diagonal: T} for a generic homogeneous polynomial.  
	In fact, the family $(\tau(h_p^t))_t$ when $h$ is a homogeneous polynomial of degree at least two over a field of characteristic zero is known to be quite complicated; \eg see \cite{ftf1, pagi.fpt_of_bivariate_forms,hara.F-pure-thresholds,mustata+takagi+watanabe.F-thresholds}.
        The behavior exhibited in these paper suggests that such polynomials are unlikely to possess the uniform behavior described in \Cref{uniform behavior: R}.
\end{remark}

\section*{Acknowledgements}The authors would like to thank the University of Kansas for hosting part of this collaboration, and the National Science Foundation for their support;  during the production of this research, the first author was supported by NSF grant DMS-1600702, and the third by NSF grant DMS-1623035.

{\small
\bibliographystyle{amsalpha}
\bibliography{bibdatabase}
}

\end{document}